\theoremstyle{plain}
\newtheorem{thm}{Theorem}[section]
\newtheorem{cor}[thm]{Corollary}
\newtheorem{lem}[thm]{Lemma}
\newtheorem{prop}[thm]{Proposition}
\theoremstyle{definition}
\newtheorem{defi}[thm]{Definition}
\theoremstyle{remark}
\newtheorem{rem}[thm]{Remark}
\numberwithin{equation}{section}
\newcommand{\de}{\partial}
\newcommand{\R}{\mathbb{R}}
\newcommand{\N}{\mathbb{N}}
\newcommand{\eps}{\varepsilon}
\newcommand{\average}{{\mathchoice {\kern1ex\vcenter{\hrule height.4pt
width 6pt depth0pt} \kern-9.7pt} {\kern1ex\vcenter{\hrule
height.4pt width 4.3pt depth0pt} \kern-7pt} {} {} }}
\def\R{\mathbb{R}}
\begin{document}

\title[Free boundary regularity for a.e. solution to the Signorini problem]{Free boundary regularity for almost every solution to the Signorini problem}

\author{Xavier Fern\'andez-Real}
\address{EPFL SB, Station 8, CH-1015 Lausanne, Switzerland}
\email{xavier.fernandez-real@epfl.ch}

\author{Xavier Ros-Oton}
\address{ICREA, Pg. Llu\'is Companys 23, 08010 Barcelona, Spain \& Universitat de Barcelona, Departament de Matem\`atiques i Inform\`atica, Gran Via de les Corts Catalanes 585, 08007 Barcelona, Spain. }
\email{xros@ub.edu}

\keywords{Thin obstacle problem; Signorini problem; free boundary; fractional obstacle problem.}

\subjclass[2010]{35R35, 47G20.}

\thanks{This work has received funding from the European Research Council (ERC) under the Grant Agreements No 721675 and No 801867. 
In addition, X. F. was supported by the SNF grant 200021\_182565 and X.R. was supported by the SNF grant 200021\_178795 and by the MINECO grant MTM2017-84214-C2-1-P}

\maketitle

\begin{abstract}
We investigate the regularity of the free boundary for the Signorini problem in $\R^{n+1}$.
It is known that regular points are $(n-1)$-dimensional and $C^\infty$. However, even for $C^\infty$ obstacles $\varphi$, the set of non-regular (or degenerate) points could be very large --- e.g. with infinite $\mathcal{H}^{n-1}$ measure.

The only two assumptions under which a nice structure result for degenerate points has been established are: when $\varphi$ is analytic, and when $\Delta\varphi < 0$. However, even in these cases, the set of degenerate points is in general $(n-1)$-dimensional --- as large as the set of regular points.

In this work, we show for the first time that, ``usually'', the set of degenerate points is \emph{small}. Namely, we prove that, given any $C^\infty$ obstacle, for \emph{almost every} solution the non-regular part of the free boundary is at most $(n-2)$-dimensional. This is the first result in this direction for the Signorini problem.

Furthermore, we prove analogous results for the obstacle problem for the fractional Laplacian $(-\Delta)^s$, and for the parabolic Signorini problem. In the parabolic Signorini problem, our main result establishes that the non-regular part of the free boundary is $(n-1-\alpha_\circ)$-dimensional for almost all times $t$, for some $\alpha_\circ > 0$.

Finally, we construct some new examples of free boundaries with degenerate points.
\end{abstract}

\vspace{5mm}

\section{Introduction}

The Signorini problem (also known as the thin or boundary obstacle problem) is a classical free boundary problem that was originally studied by Antonio Signorini in connection with linear elasticity \cite{Sig33, Sig59, KO88}.
The problem gained further attention in the seventies due to its connection to mechanics, biology, and even finance --- see \cite{DL76}, \cite{Mer76,CT04}, and \cite{Fer21, Ros18} ---, and since then it has been widely studied in the mathematical community; see \cite{Caf79, AC04, CS07, ACS08, GP09, PSU12, KPS15, KRS19, DGPT17, FS18, CSV19, JN17, FJ18, Shi18} and references therein.

\vspace{2mm}

The main goal of this work is to better understand the size and structure of the non-regular part of the free boundary for such problem.

In particular, our goal is to prove for the first time that, for \emph{almost every} solution (see Remark~\ref{rem.prevalence}), the set of non-regular points is \emph{small}. 
As explained in detail below, this is completely new even when the obstacle $\varphi$ is analytic or when it satisfies $\Delta \varphi < 0$.

\vspace{1mm}

\subsection{The Signorini problem}

Let us denote $x = (x', x_{n+1}) \in \R^n\times\R$ and $B_1^+ = B_1\cap \{x_{n+1} > 0\}$. We say that $u \in H^1(B_1^+)$ is a solution to the Signorini problem with a smooth obstacle $\varphi $ defined on $B_1' := B_1\cap \{x_{n+1} = 0\}$ if $u$ solves
\begin{equation}
\label{eq.thinobst_intro}
  \left\{ \begin{array}{rcll}
  \Delta u&=&0 & \textrm{ in } B_1^+\\
  \min\{-\de_{x_{n+1}} u, u-\varphi\} & = &0 & \textrm{ on } B_1\cap \{x_{{n+1}} = 0\},
  \end{array}\right.
\end{equation}
in the weak sense, for some boundary data $g\in C^0(\de B_1\cap \{x_{n+1} \ge 0\})$. Solutions to the Signorini problem are minimizers of the Dirichlet energy
\[
\int_{B_1^+} |\nabla u |^2,
\]
under the constrain $u \ge \varphi$ on $\{x_{n+1} = 0\}$, and with boundary conditions $u = g$ on $\de B_1 \cap \{x_{n+1} > 0\}$.

Problem \eqref{eq.thinobst_intro} is a {\em free boundary problem}, i.e., the unknowns of the problem are the solution itself, and the contact set
\[
\Lambda(u) := \big\{x'\in \R^n: u(x', 0) = \varphi(x')\big\}\times\{0\}\subset \R^{n+1},
\]
 whose topological boundary in the relative topology of $\R^n$, which we denote $\Gamma(u) = \de\Lambda(u) = \de \{x'\in \R^n: u(x', 0) = \varphi(x')\}\times\{0\}$, is known as the \emph{free boundary}.

Solutions to \eqref{eq.thinobst_intro} are known to be $C^{1,\frac12}$ (see \cite{AC04}), and this is optimal.

\subsection{The free boundary}

While the optimal regularity of the solution is already known, the structure and regularity of the free boundary is still not completely understood. The main known results are the following.

The free boundary can be divided into two sets,
\[
\Gamma(u) = {\rm Reg}(u) \cup {\rm Deg}(u),
\]
the set of {\em regular points},
\[
{\rm Reg}(u) := \left\{x = (x', 0)\in \Gamma(u) : 0<c r^{3/2}\le \sup_{B_r'(x')} (u - \varphi) \le C r^{3/2},\quad\forall r\in (0, r_\circ)\right\},
\]
and the set of non-regular points or {\em degenerate points}
\begin{equation}
\label{eq.degu}
{\rm Deg}(u) := \left\{x = (x', 0)\in \Gamma(u) : 0\le \sup_{B_r'(x')} (u - \varphi) \le Cr^{2},\quad\forall r\in (0, r_\circ)\right\},
\end{equation}
(see \cite{ACS08}). Alternatively, each of the subsets can be defined according to the order of the blow-up at that point. Namely, the set of regular points are those whose blow-up is of order $\frac32$, and the set of degenerate points are those whose blow-up is of order $\kappa$ for some $\kappa \in [2, \infty]$.

Let us denote $\Gamma_\kappa$ the set of free boundary points of order $\kappa$. That is, those points whose blow-up is homogeneous of order $\kappa$ (we will be more precise about it later on, in Section~\ref{sec.Sch}; the definition of $\Gamma_\infty$ is slightly different). Then, it is well known that the free boundary can be divided as
\begin{equation}
\label{eq.FBst}
\Gamma(u) = \Gamma_{3/2} \cup \Gamma_{\rm even} \cup \Gamma_{\rm odd} \cup \Gamma_{\rm half} \cup \Gamma_* \cup \Gamma_{\infty},
\end{equation}
where:
\begin{itemize}[leftmargin=5.5mm]
\item $ \Gamma_{3/2} = {\rm Reg}(u)$ is the set of regular points. They are an open $(n-1)$-dimensional subset of $\Gamma(u)$, and it is $C^\infty$ (see \cite{ACS08, KPS15, DS16}).

\item $\Gamma_{\rm even} = \bigcup_{m\ge 1} \Gamma_{2m}(u)$ denotes the set of points whose blow-ups have even homogeneity. Equivalently, they can also be characterised as those points of the free boundary where the contact set has zero density, and they are often called singular points. They are contained in the countable union of $C^1$ $(n-1)$-dimensional manifolds; see \cite{GP09, FJ18}.

\item $\Gamma_{\rm odd}= \bigcup_{m\ge 1} \Gamma_{2m+1}(u)$ is, a priori, also an at most $(n-1)$-dimensional subset of the free boundary and it is $(n-1)$-rectifiable (see \cite{FS18, KW13, FS19,FRS19}), although it is not actually known whether it exists.

\item  $\Gamma_{\rm half} = \bigcup_{m\ge 1} \Gamma_{2m+3/2}(u)$ corresponds to those points with blow-up of order $\frac72$, $\frac{11}{2}$, etc. They are much less understood than regular points. The set $\Gamma_{\rm half}$ is an $(n-1)$-dimensional subset of the free boundary and it is $(n-1)$-rectifiable (see \cite{FS18, KW13, FS19}).

\item $\Gamma_*$ is the set of all points with homogeneities $\kappa\in (2, \infty)$, with $\kappa\notin \N$ and $\kappa\notin 2\N-\frac12$. This set has Hausdorff dimension at most $n-2$, so it is always \emph{small}, see \cite{FS18, KW13, FS19}.

\item  $\Gamma_{\infty}$ is the set of points with infinite order (namely, those points at which $u-\varphi$ vanishes at infinite order, see \eqref{eq.Ginf}). For general $C^\infty$ obstacles it could be a huge set, even a fractal set of infinite perimeter with dimension exceeding $n-1$. When $\varphi$ is analytic, instead, $\Gamma_\infty$ is empty.
\end{itemize}

Overall, we see that, for general $C^\infty$ obstacles, the free boundary could be really irregular.

The only two assumptions under which a better regularity is known are:
\begin{itemize}\setlength\itemsep{0.1cm}
\item[$\circ$] $\Delta \varphi < 0$ on $B_1'$ and $u = 0$ on $\de B_1\cap \{x_{n+1} > 0\}$. In this case, $\Gamma(u) = \Gamma_{3/2}\cup\Gamma_2$ and the set of degenerate points is locally contained in a $C^1$ manifold; see \cite{BFR18}.
\item[$\circ$] $\varphi$ is analytic.
In this case, $\Gamma_\infty = \varnothing$ and $\Gamma$ is $(n-1)$-rectifiable, in the sense that it is contained in a countable union of $C^1$ manifolds, up to a set of zero $\mathcal{H}^{n-1}$-measure, see \cite{FS18,KW13}.
\end{itemize}

The goal of this paper is to show that, actually, for {\em most} solutions, {\em all} the sets $\Gamma_{\rm even}$, $\Gamma_{\rm odd}$, $\Gamma_{\rm half}$, and $\Gamma_\infty$ are {\em small}, namely, of dimension at most $n-2$. This is new even in case that $\varphi$ is analytic and $\Delta \varphi < 0$.

\subsection{Our results}
%
%

\label{ssec.ourresults}
We will prove here that, even if degenerate points could potentially constitute a large part of the free boundary (of the same dimension as the regular part, or even higher), they are not {\em common}. More precisely, for almost every obstacle (or for almost every boundary datum), the set of degenerate points is {\em small}. This is the first result in this direction for the Signorini problem, even for zero obstacle.

Let $g_\lambda\in C^0(\de B_1)$ for $\lambda\in [0, 1]$, and let us denote by $u_\lambda$ the family of solutions to \eqref{eq.thinobst_intro}, satisfying
\begin{equation}
\label{eq.bdrydata}
u_\lambda = g_\lambda,\quad\textrm{on}\quad \de B_1\cap \{x_{n+1} > 0\},
\end{equation}
with $g_\lambda$ satisfying
\begin{equation}
\label{eq.cond_glam}
\begin{array}{rcll}
g_{\lambda+\eps}& \ge &g_\lambda,&\textrm{on}\quad \de B_1\cap\{x_{n+1} > 0\}\\
g_{\lambda+\eps} &\ge &g_\lambda + \eps&\textrm{on}\quad \de B_1\cap \{x_{n+1} \ge \frac12\},
\end{array}
\end{equation}
for all $\lambda \in [0, 1)$, $\eps\in (0, 1-\lambda)$.

Our main result reads as follows.

\begin{thm}
\label{thm.MAIN0}
Let $u_\lambda$ be any family of solutions of \eqref{eq.thinobst_intro} satisfying \eqref{eq.bdrydata}-\eqref{eq.cond_glam}, for some obstacle $\varphi\in C^\infty$.
Then, we have
\[
\dim_{\rm \mathcal{H}} \big({\rm Deg}(u_\lambda)\big) \le n-2\quad\textrm{for a.e.}\quad\lambda\in [0, 1],
\]
where ${\rm Deg}(u_\lambda)$ is defined by \eqref{eq.degu}.

In other words, for a.e. $\lambda\in [0, 1]$, the free boundary $\Gamma(u_\lambda)$ is a $C^\infty$ $(n-1)$-dimensional manifold, up to a closed subset of Hausdorff dimension $n-2$.
\end{thm}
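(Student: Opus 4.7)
\emph{Overall strategy.} I would proceed by a space-time analysis. The key input is the monotonicity in $\lambda$ provided by \eqref{eq.cond_glam}, which allows one to treat the family $\{u_\lambda\}_{\lambda\in[0,1]}$ as a single object in $\R^{n+1}\times[0,1]$ and then reduce the theorem, via a Marstrand-type slicing argument, to a Hausdorff dimension bound on the \emph{space-time} degenerate set
\[
\mathcal{D} := \big\{(x', \lambda) \in B_{1/2}' \times [0, 1] : x' \in {\rm Deg}(u_\lambda)\big\}.
\]
Concretely, proving $\dim_{\mathcal{H}}(\mathcal{D}) \leq n - 1$ implies, by the slicing theorem, that the slice $\mathcal{D}_\lambda = {\rm Deg}(u_\lambda)$ has $\dim_{\mathcal H} \leq n-2$ for $\mathcal{H}^1$-a.e. $\lambda$.

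\emph{Step 1: Monotonicity.} From \eqref{eq.cond_glam} and the comparison principle for the Signorini problem, $\lambda\mapsto u_\lambda$ is pointwise nondecreasing, and the contact sets $\Lambda(u_\lambda)$ are pointwise nonincreasing. For $\mu > \lambda$, the difference $v := u_\mu - u_\lambda\geq 0$ is harmonic in $B_1^+$ with $v \geq \mu - \lambda$ on $\partial B_1 \cap \{x_{n+1} \geq \tfrac12\}$, and the maximum principle gives the quantitative lower bound $v \geq c(\mu - \lambda)$ on compact subsets of $B_1^+$.

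\emph{Step 2: Core dimension estimate.} By \eqref{eq.FBst}, ${\rm Deg}(u_\lambda) = \Gamma_{\rm even} \cup \Gamma_{\rm odd} \cup \Gamma_{\rm half} \cup \Gamma_* \cup \Gamma_\infty$. The stratum $\Gamma_*$ is already at most $(n-2)$-dimensional. I would treat each remaining stratum separately. For a stratum of finite order $\kappa \in \{2m, 2m+1, 2m+\tfrac32\}$ with $m\ge 1$, I would combine the Weiss/Monneau-type monotonicity formulas --- which yield the existence of a unique nontrivial blow-up of order $\kappa$ at each degenerate point --- with the quantitative comparison from Step~1, to obtain a H\"older-type separation: whenever $(x_0,\lambda_0)$ and $(x_1,\lambda_1)$ both lie in the same stratum of $\mathcal{D}$, then
\[
|x_1 - x_0| \gtrsim (\lambda_1 - \lambda_0)^{1/\kappa}.
\]
This forces each stratum of $\mathcal{D}$ to be covered by countably many $(1/\kappa)$-H\"older graphs over $[0,1]$, each of Hausdorff dimension at most $n - 1$. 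The stratum $\Gamma_\infty$ requires a more ad hoc argument, exploiting that infinite-order vanishing is extremely non-generic: a fixed small increase in $\lambda$ must rule out infinite-order degeneracy except on a small set of points.

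\emph{Main obstacle.} The crux is Step~2, and the main difficulty is that the quantitative monotonicity from Step~1 \emph{degenerates on the contact set}: if $x_0\in\Lambda(u_\lambda)\cap\Lambda(u_\mu)$, then $u_\mu(x_0)-u_\lambda(x_0)=0$, so the bulk lower bound $v\geq c(\mu-\lambda)$ provides no direct information at $x_0$. Transferring the bulk monotonicity into a quantitative statement about the free boundary on the thin space requires coupling the Weiss/Monneau monotonicity and the $\kappa$-homogeneous blow-up of $u_\lambda - \varphi$ with the harmonic push $v$, and balancing $v \sim \mu - \lambda$ in the bulk against the polynomial growth $u_\lambda - \varphi \sim r^\kappa$ on the thin space. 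This balance produces the exponent $1/\kappa$ in the separation estimate and is the genuinely new input needed beyond the known stratification theory.
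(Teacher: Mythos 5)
Your overall philosophy (monotone family, quantitative separation between degenerate points at different values of $\lambda$, then a GMT argument in the parameter) matches the paper's, and your Step~2 separation estimate is essentially the paper's Proposition~\ref{prop.kap}. But the final step of your argument does not yield the theorem. Covering the space--time set $\mathcal{D}$ by H\"older graphs of $\lambda$ over the spatial projection $E=\bigcup_{\lambda}{\rm Deg}(u_\lambda)$ only gives $\dim_{\mathcal H}\mathcal{D}\le\dim_{\mathcal H}E$, and for the strata $\Gamma_{\rm odd}$, $\Gamma_{\rm half}$ and especially $\Gamma_\infty$ the union $E$ over \emph{uncountably many} $\lambda$ is only known to be $n$-dimensional (a single $\Gamma_\infty(u_\lambda)$ can already exceed dimension $n-1$, cf.\ Proposition~\ref{prop.inf_points}); your claim that each graph has dimension at most $n-1$ is therefore unjustified. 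Worse, the Eilenberg/Marstrand slicing inequality gains exactly one dimension, so even granting $\dim_{\mathcal H}\mathcal{D}\le n$ you only conclude $\dim_{\mathcal H}{\rm Deg}(u_\lambda)\le n-1$ for a.e.\ $\lambda$, not $n-2$. The point of the paper's Lemma~\ref{lem.CL} (from \cite{FRS19}) is precisely to convert a \emph{one-sided} separation $B_r(x_\circ)\cap E_\lambda=\varnothing$ for $\lambda>\lambda_\circ+r^{\gamma-\eps}$ into a gain of $\gamma$ dimensions in the slice; for points of order $\ge\kappa$ one has $\gamma=\kappa-2s=\kappa-1\ge 2$ (when $\kappa\ge3$, $s=\tfrac12$), which is what produces $n-\kappa+1\le n-2$. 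Your plan uses the separation only to build H\"older graphs, which discards exactly the extra gain $\gamma-1>0$ that the theorem requires.

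Two further points. First, your exponent is slightly off: because the increment $u_{\lambda+\eps}-u_\lambda$ vanishes on $\Lambda(u_\lambda)$ and is only bounded below by $c\,\eps\,{\rm dist}(\cdot,\Lambda(u_\lambda))^{2s}$ (Lemma~\ref{lem.ulambda}), balancing against the growth $r^\kappa$ gives $\lambda_1-\lambda_0\lesssim|x_1-x_0|^{\kappa-2s}$, i.e.\ exponent $1/(\kappa-1)$ for the Signorini case, not $1/\kappa$. Second, the borderline stratum $\Gamma_2$ (order exactly $2$) then has $\gamma=1$, so even the correct GMT lemma with $\beta=n$ gives only $n-1$; the paper must separately prove that $\bigcup_\lambda\Gamma_2^\lambda$ is $(n-1)$-dimensional via a parametrized Whitney-extension/Monneau argument (Proposition~\ref{prop.singdim}, which needs continuity of $x_\circ\mapsto\lambda(x_\circ)$ and of the blow-ups across the family), and also sharpens the separation exponent at order-$2$ points using the cone structure of the contact set (Proposition~\ref{prop.2grow}). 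Your proposal does not address this borderline case, and for the strata with $\kappa\in(2,3)$ one must additionally invoke the Focardi--Spadaro bound $\dim_{\mathcal H}\tilde\Gamma_*\le n-2$, which holds for every fixed solution.
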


This result is completely new even for analytic obstacles, or for $\varphi = 0$. No result of this type was known for the Signorini problem.

The results we prove (see Theorem~\ref{thm.main000} and Proposition~\ref{prop.2mgrow}) are actually more precise and concern the Hausdorff dimension of $\Gamma_{\ge \kappa}(u_\lambda)$, the set of points of order greater or equal than $\kappa$. We will show that, if $3\le \kappa \le n+1$, then $\Gamma_{\ge \kappa}(u_\lambda)$ has dimension $n-\kappa+1$, while for $\kappa > n+1$, then $\Gamma_{\ge \kappa}(u_\lambda)$ is empty for almost every $\lambda \in [0, 1]$. We refer to \cite[Chapter 4]{Mat95} for the definition of Hausdorff dimension. 

Theorem \ref{thm.MAIN0} also holds true for non-smooth obstacles. Namely, we will prove that for $\varphi\in C^{3, 1}$ we have $\dim_{\rm \mathcal{H}} \left({\rm Deg}(u_\lambda)\right) \le n-2$ for a.e. $\lambda\in [0, 1]$.
In particular, the free boundary $\Gamma(u_\lambda)$ is  $C^{2, \alpha}$  up to a subset of dimension $n-2$ for a.e. $\lambda\in [0, 1]$; see \cite{JN17, KPS15, AR19}.

\begin{rem}
\label{rem.prevalence}
In the context of the theory of prevalence, \cite{HSY92} (see also \cite{OY05}), Theorem~\ref{thm.MAIN0} says that the set of solutions satisfying that the free boundary has a small degenerate set is \emph{prevalent} within the set of solutions (say, given by $C^0$ or $L^\infty$ boundary data). Alternatively, the set of solutions whose degenerate set is not lower dimensional is \emph{shy}.

In particular, we can say that for \emph{almost every} boundary data (see \cite[Definition 3.1]{OY05}) the corresponding solution has a lower dimensional degenerate set. This is because adding a constant as in \eqref{eq.cond_glam} is a \emph{1-probe} (see \cite[Definition 3.5]{OY05}) for the set of boundary data, thanks to Theorem~\ref{thm.MAIN0}.
\end{rem}

We will establish the following finer result regarding the set $\Gamma_{\infty}(u_\lambda)$. While it is known that it can certainly exist for some solutions $u_\lambda$ (see Proposition~\ref{prop.inf_points}), we show that it will be empty for almost every $\lambda\in[0, 1]$:

\begin{thm}
\label{thm.MAIN01}
Let $u_\lambda$ be any family of solutions of \eqref{eq.thinobst_intro} satisfying \eqref{eq.bdrydata}-\eqref{eq.cond_glam}, for some obstacle $\varphi\in C^\infty$. Then, there exists $\mathcal{E}\subset [0,1]$ such that $\dim_{\mathcal{H}} \mathcal{E} = 0$ and
\[
\Gamma_{\infty}(u_\lambda) = \varnothing,
\]
for every $\lambda \in [0, 1]\setminus \mathcal{E}$.

Furthermore, for every $h > 0$, there exists some $\mathcal{E}_h\subset [0, 1]$ such that $\dim_{\mathcal{M}} \mathcal{E}_h = 0$ and 
\[
\Gamma_\infty(u_\lambda)\cap B_{1-h} = \varnothing,
\]
for every $\lambda\in [0, 1]\setminus \mathcal{E}_h$. 
\end{thm}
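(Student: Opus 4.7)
The strategy is to first establish the stronger \emph{second} statement, $\dim_{\mathcal M}\mathcal{E}_h = 0$ with $\mathcal{E}_h:=\{\lambda\in[0,1]:\Gamma_\infty(u_\lambda)\cap B_{1-h}\ne\varnothing\}$, by combining three ingredients, and then to derive the first statement from it by writing $\mathcal{E}$ as a countable union of the sets $\mathcal{E}_{1/k}$. The three ingredients are: (i) the uniform half-ball growth estimate at $\Gamma_{\ge m}$-points provided by Proposition~\ref{prop.2mgrow}, namely, for every integer $m\ge 3$ there exist $C_m, r_m>0$ such that $\sup_{B_r^+(x_0)}(u_\lambda-\varphi) \le C_m r^m$ whenever $x_0\in\Gamma_{\ge m}(u_\lambda)\cap B_{1-h}$ and $r<r_m$ (after extending $\varphi$ harmonically into $B_1^+$), which applies to $\Gamma_\infty$-points since $\Gamma_\infty\subset\Gamma_{\ge m}$; (ii) a quantitative strict monotonicity of the family with \emph{linear} decay in $x_{n+1}$; (iii) a ball-packing argument.

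For ingredient (ii), fix $\lambda<\lambda'$ in $[0,1]$ and put $\varepsilon = \lambda'-\lambda$, $v=u_{\lambda'}-u_\lambda$. Standard comparison gives $v\ge 0$ in $B_1^+$. Let $v_{\rm Dir}$ be the harmonic function in $B_1^+$ with zero Dirichlet data on $\{x_{n+1}=0\}\cap B_1$ and data $\varepsilon\,\mathbf{1}_{\{x_{n+1}\ge 1/2\}}$ on $\partial B_1\cap\{x_{n+1}>0\}$. By \eqref{eq.cond_glam} and $v\ge 0$ on the thin space, $v\ge v_{\rm Dir}$ on $\partial B_1^+$, hence in $B_1^+$ by the maximum principle. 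The odd reflection of $v_{\rm Dir}$ across $\{x_{n+1}=0\}$ is harmonic in $B_1$, so a Hopf/boundary Harnack argument yields constants $c_h, t_h>0$, independent of $\lambda,\varepsilon$, with
\[
v(y',y_{n+1})\ \ge\ c_h\,\varepsilon\,y_{n+1}\qquad\text{for every }(y',y_{n+1})\in B_{1-h}^+\text{ with }y_{n+1}\le t_h.
\]

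Fix now an integer $m\ge 3$ and $\rho\in(0,\min\{r_m,t_h\}/3)$. Cover $\overline{B_{1-h}}$ by $N(\rho)\le C_h\rho^{-(n+1)}$ balls $B_\rho(y_j)$, and set $\mathcal{E}_h^j(\rho):=\{\lambda\in\mathcal{E}_h : \Gamma_\infty(u_\lambda)\cap B_\rho(y_j)\ne\varnothing\}$. Take any $\lambda<\lambda'$ in $\mathcal{E}_h^j(\rho)$ with $\varepsilon=\lambda'-\lambda$, and pick $x\in\Gamma_\infty(u_\lambda)\cap B_\rho(y_j)$, $x'\in\Gamma_\infty(u_{\lambda'})\cap B_\rho(y_j)$, so that $|x-x'|\le 2\rho$. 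Evaluate at the interior probe point $y:=x+\rho\,e_{n+1}$; then $y_{n+1}=\rho$, so (ii) gives $v(y)\ge c_h\,\varepsilon\,\rho$. On the other hand, $y\in B_\rho^+(x)\cap B_{3\rho}^+(x')$, so (i) applied at $x$ for $u_\lambda$ and at $x'$ for $u_{\lambda'}$ yields
\[
(u_\lambda-\varphi)(y)\ \le\ C_m\rho^{m},\qquad (u_{\lambda'}-\varphi)(y)\ \le\ C_m(3\rho)^{m}\ \le\ \widetilde C_m\rho^{m}.
\]
Taking the difference gives $c_h\,\varepsilon\,\rho\le v(y)\le 2\widetilde C_m\,\rho^{m}$, whence $\varepsilon\le C'_m\rho^{m-1}$. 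Thus $\mathrm{diam}\,\mathcal{E}_h^j(\rho)\le C'_m\rho^{m-1}$, and $\mathcal{E}_h$ is covered by $C_h\rho^{-(n+1)}$ intervals of length at most $C'_m\rho^{m-1}$.

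Rewriting with $\eta = C'_m\rho^{m-1}$, the number of intervals of length $\eta$ required to cover $\mathcal{E}_h$ is at most $C_{h,m}\,\eta^{-(n+1)/(m-1)}$, so $\overline{\dim}_{\mathcal M}\mathcal{E}_h\le (n+1)/(m-1)$. Sending $m\to\infty$ gives $\dim_{\mathcal M}\mathcal{E}_h=0$, proving the second statement. For the first, any $x_0\in\Gamma_\infty(u_\lambda)$ satisfies $|x_0|<1$, hence lies in some $B_{1-1/k}$; thus $\mathcal{E}=\bigcup_{k\ge 1}\mathcal{E}_{1/k}$. Since $\dim_{\mathcal H}\le\dim_{\mathcal M}$ and the Hausdorff dimension of a countable union is the supremum of the dimensions, we conclude $\dim_{\mathcal H}\mathcal{E}=0$. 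The main obstacle is ingredient (ii): the \emph{linear} lower bound in $y_{n+1}$ is essential, as a monotonicity estimate of the form $v\ge c_h\varepsilon$ holding only on a fixed inner region would force us to probe at a scale $\sim h$ independent of $\rho$, preventing any $m\to\infty$ limit; the factor $y_{n+1}$ matches exactly the scale $\rho$ of the probe point and produces the dimension-zero bound.
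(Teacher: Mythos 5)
Your proof is correct, and its three ingredients are exactly the ones the paper uses, but you assemble them in a genuinely more direct way. The paper's route is: Lemma~\ref{lem.ulambda} (the $s=\tfrac12$ case of which is your linear-in-$x_{n+1}$ monotonicity bound (ii)) plus the growth estimate \eqref{eq.growthest} feed into Proposition~\ref{prop.kap}, which shows that the \emph{contact set itself} is cleared from $B_r(x_\circ)$ once $\lambda$ increases by $C_*r^{\kappa-2s}$; this clearing step requires the barrier Lemma~\ref{lem.epslem} to push positivity from $\{|x_{n+1}|\ge\sigma r\}$ down to the thin space. The separation-in-$\lambda$ statement is then fed into the abstract covering Lemmas~\ref{lem.CL} and \ref{lem.CL2}, giving Theorem~\ref{thm.main000} and, letting $\kappa\to\infty$, Corollary~\ref{cor.main000}. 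You instead bypass Lemma~\ref{lem.epslem} entirely: rather than clearing the contact set, you compare two solutions both having $\Gamma_\infty$-points in the same $\rho$-ball at a single interior probe point $x+\rho e_{n+1}$, where the lower bound $c_h\eps\rho$ collides with the two-sided flatness $O(\rho^m)$; and you make the packing count explicit instead of invoking Lemma~\ref{lem.CL2}. Your deduction of the Hausdorff part from the Minkowski part via $\mathcal{E}=\bigcup_k\mathcal{E}_{1/k}$ and countable stability is also cleaner than the paper's (which uses Lemma~\ref{lem.CL}(1) with a point-dependent radius). What the paper's heavier route buys is Proposition~\ref{prop.kap} itself, which is needed elsewhere (e.g.\ for the continuity of $\lambda(x_\circ)$ and for the finite-order strata in Theorem~\ref{thm.MAIN0}); for Theorem~\ref{thm.MAIN01} alone your shortcut suffices. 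Two minor corrections: the growth estimate you call ingredient (i) is \eqref{eq.growthest} (from the frequency monotonicity, \cite[Lemma 6.4]{GR19}), not Proposition~\ref{prop.2mgrow}; and you should state it as a two-sided bound $\sup_{B_r}|\bar u_\lambda^{x_\circ}|\le C_m r^m$ on the normalized function $\bar u_\lambda^{x_\circ}$ of Definition~\ref{defi.baru}, since off the thin space $u_\lambda-\varphi$ need not be nonnegative and your difference estimate uses the lower bound $-C_m\rho^m$ at the probe point as well; the discrepancy between the two normalizations $\bar u_\lambda^{x}$ and $\bar u_{\lambda'}^{x'}$ at the probe point is $O(\rho^{\tau+\alpha})$ (as in the computation of $P_\tau^k$ in Proposition~\ref{prop.whitapp}) and is absorbed by taking $\tau\ge m$.
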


We remark that in the previous result, $\dim_{\mathcal{H}}$ denotes the Hausdorff dimension, whereas $\dim_{\mathcal{M}}$ denotes the Minkowski dimension (we refer to \cite[Chapters 4 and 5]{Mat95}). As such, the second part of the result is much stronger than the first one (e.g., $0 = \dim_{\mathcal{H}}\big(\mathbb{Q}\cap [0, 1]\big)<\dim_{\mathcal{M}}\big(\mathbb{Q}\cap [0, 1]\big) = 1$).

Let us briefly comment on the condition \eqref{eq.cond_glam}. Notice that such condition can be reformulated in many ways.
In the simplest case, one could simply take $g_\lambda = g_0\pm \lambda$.
Alternatively, one could take a family of obstacles $\varphi_\lambda = \varphi_0 \pm \lambda$ (with fixed boundary conditions); this is equivalent to fixing the obstacle $\varphi_0$ and moving the boundary data $g_\lambda = g\mp \lambda$.
Furthermore, one could also consider $g_\lambda = g_0 + \lambda\Psi$ for any $\Psi \ge 0$, $\Psi\not\equiv 0$.
Then, even if the second condition in \eqref{eq.cond_glam} is not directly fulfilled, a simple use of strong maximum principle makes it true in some smaller ball $B_{1-\rho}$, so that $g_{\lambda+\eps} \ge g_\lambda + c(\rho)\eps$ on $\de B_{1-\rho}\cap \{x_{n+1}\ge \frac12 - \rho/2\}$. By rescaling the function and the domain, we can rewrite it as \eqref{eq.cond_glam}.

\vspace{2mm}

Regularity results for almost every solution have been established before in the context of the classical obstacle problem by Monneau in \cite{Mon03}.
In such problem, however, all free boundary points have homogeneity 2, and non-regular points are characterised by the density of the contact set around them: non-regular points are those at which the contact set has density zero.
In the Signorini problem, instead, the structure of non-regular points is quite different, and they are characterised by the growth of $u$ around them (recall \eqref{eq.degu} and the definition of $\Gamma_{\rm even}$, $\Gamma_{\rm odd}$, $\Gamma_{\rm half}$, and $\Gamma_\infty$).
This is why the approach of \cite{Mon03} cannot work in the present context.

More recently, the results of Monneau for the classical obstacle problem have been widely improved by Figalli, the second author, and Serra in \cite{FRS19}.
The results in \cite{FRS19} are based on very fine higher order expansions at singular points, which then lead to a better understanding of solutions around them, combined with new dimension reduction arguments and a cleaning lemma to get improved bounds on higher order expansions. 

Here, due to the different nature of the problem, we do not need any fine expansion at non-regular points nor any dimension reduction. Most of our arguments require only the growth of solutions at different types of degenerate points, combined with appropriate barriers, and Harnack-type inequalities. The starting point of our results is to use a simple (but key) GMT lemma from \cite{FRS19} (see Lemma~\ref{lem.CL} below).

\subsection{Parabolic Signorini problem}\label{ss.parab}
The previous results use rather general techniques that suitably modified can be applied to other situations. We show here that using a similar approach as in the elliptic case, one can deduce results regarding the size of the non-regular part of the free boundary for the parabolic version of the Signorini problem, for almost every time $t$.

We say that a function $u = u(x, t) \in H^{1, 0}(B_1^+\times (-1, 0])$ (see \cite[Chapter 2]{DGPT17}) solves the parabolic Signorini problem with stationary obstacle $\varphi = \varphi(x)$ if $u$ solves
\begin{equation}
\label{eq.parab_intro}
  \left\{ \begin{array}{rcll}
  \de_t u - \Delta u&=&0 & \textrm{ in } B_1^+\times (-1, 0]\\
  \min\{-\de_{x_{n+1}} u, u-\varphi\} & = &0 & \textrm{ on } B_1\cap \{x_{{n+1}} = 0\} \times (-1, 0 ],
  \end{array}\right.
\end{equation}
in the weak sense (cf. \eqref{eq.thinobst_intro}). A thorough study of the parabolic Signorini problem was made by Danielli, Garofalo, Petrosyan, and To, in \cite{DGPT17}.

The parabolic Signorini problem is a free boundary problem, where the free boundary belongs to $B_1'\times(-1, 0]$ and is defined by
\[
\Gamma(u) := \de_{B_1'\times(-1, 0]}\big\{(x', t) \in B_1'\times(-1, 0] : u(x', 0, t) > \varphi(x')\big\},
\]
where $\de_{B_1'\times(-1, 0]}$ denotes the boundary in the relative topology of ${B_1'\times(-1, 0]}$. Analogously to the elliptic Signorini problem, the free boundary can be divided into regular points and degenerate (or non-regular) points:
\[
\Gamma(u) = {\rm Reg}(u) \cup {\rm Deg}(u).
\]

The set of regular points are those where parabolic blow-ups are parabolically $\frac32$-homogeneous. On the other hand, degenerate points are those where parabolic blow-ups of the solution are parabolically $\kappa$-homogeneous, with $\kappa \ge 2$ (alternatively, the solution detaches at most quadratically from the obstacle in parabolic cylinders, $B_r\times(-r^2, 0]$). Further stratifications according to the homogeneity of the parabolic blow-ups can be done in an analogous way to the elliptic problem, see \cite{DGPT17}.

The set of regular points ${\rm Reg}(u)$ is a relatively open subset of $\Gamma(u)$ and the free boundary is smooth ($C^{1,\alpha}$) around them (see \cite[Chapter 11]{DGPT17}). The set of degenerate points, however, could be even larger than the set of regular points.

In this manuscript we show that, under the appropriate conditions, for a.e. time $t\in (-1, 0]$ the set of degenerate points has dimension $(n-1-\alpha_\circ)$ for some $\alpha_\circ > 0$ depending only on $n$. That is, for a.e. time, the free boundary is mostly comprised of regular points, and therefore, it is smooth almost everywhere.

In order to be able to get results of this type we must impose some conditions on the solution. We will assume that
\begin{equation}
\label{eq.condparab}
u_t> 0\quad\textrm{in}\quad B_1^+\cup \left[(B_1'\times (-1, 0])\cap \{u > \varphi\}\right],
\end{equation}
that is, wherever the solution $u$ is not in contact with the obstacle $\varphi$, it is strictly monotone. Alternatively, by the strong maximum principle, the condition can be rewritten as
\[
\begin{array}{ll}
u_t \ge 0,& \textrm{in } \overline{B_1^+}\times(-1, 0],\\
u_t \ge 1,& \textrm{in } \left(B_1^+\cap \{x_{n+1}\ge 1/2\}\right)\times(-1, 0],
\end{array}
\]
up to a constant multiplicative factor.

Condition~\eqref{eq.condparab} is somewhat necessary. If the strict monotonicity was not required, we could be dealing with a {\em bad} solution (with large non-regular set) of the elliptic problem for a set of times of positive measure, and therefore, we could not expect a result like the one we prove. On the other hand, if one allowed changes in the sign of $u_t$ (alternatively, one allowed {\em non-stationary} obstacles), then the result is also not true (see, for instance, the example discussed in \cite[Figure 12.1]{DGPT17}).

Condition~\eqref{eq.condparab} is actually quite natural. One of the main applications of the parabolic Signorini problem is the study of semi-permeable membranes (see \cite[Section 2.2]{DL76}):

We consider a domain ($B_1^+$) and a thin membrane ($B_1'$), which is semi-permeable: that is, a fluid can pass through $B_1'$ into $B_1^+$ freely, but outflow of the fluid is prevented by the membrane. If we suppose that there is a given liquid pressure applied to the membrane $B_1'$ given by $\varphi$, and we denote $u(x, t)$ the inside pressure of the liquid in $B_1^+$, then the parabolic Signorini problem \eqref{eq.parab_intro} describes the evolution of the inside pressure with time. In particular, since liquid can only enter $B_1^+$ (and we assume no liquid can leave from the other parts of the boundary), pressure inside the domain can only become higher, and the solution will be such that $u_t > 0$. The same condition also appears in volume injection through a semi-permeable wall  (\cite[subsections 2.2.3 and 2.2.4]{DL76}).

Our result reads as follows.

\begin{thm}
\label{thm.parab_MAIN}
Let $\varphi\in C^\infty$ and let $u$ be a solution to \eqref{eq.parab_intro} satisfying \eqref{eq.condparab}. Then,
\[
\dim_{\mathcal{H}} \big({\rm Deg}(u)\cap \{t = t_\circ\} \big) \le n-1-\alpha_\circ\quad\textrm{for a.e.}\quad t_\circ\in (-1, 0],
\]
for some $\alpha_\circ >0 $ depending only on $n$.

In particular, for a.e. $t_\circ \in (-1, 0]$ the free boundary $\Gamma(u)\cap \{t = t_\circ\} $ is a $C^{1,\alpha}$ $(n-1)$-dimensional manifold, up to a closed subset of Hausdorff dimension $n-1-\alpha_0$.
\end{thm}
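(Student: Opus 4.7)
I would mirror the argument used for the elliptic Theorem~\ref{thm.MAIN0}, viewing the one-parameter family $\{u(\cdot,t)\}_{t\in(-1,0]}$ as playing the role of the monotone family $\{u_\lambda\}$. Condition~\eqref{eq.condparab}, combined with the strong maximum principle, yields $u_t\ge c>0$ in $\{x_{n+1}\ge 1/2\}$, which is the parabolic analogue of the lower bound in \eqref{eq.cond_glam} and makes time $t$ a legitimate \emph{probe}. The task is then to bound the Hausdorff dimension of $\Gamma_{\ge\kappa}(u)\cap\{t=t_\circ\}$ for a.e.\ $t_\circ$, where the threshold $\kappa$ is taken slightly larger than $2$; the surplus $\kappa-2$ will produce the positive exponent $\alpha_\circ$.

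First, I would invoke the parabolic stratification of \cite[Chapter 12]{DGPT17}: at a degenerate point $(x_0,t_0)$ of parabolic order $\kappa\ge 2$, the solution admits $\kappa$-parabolically-homogeneous blow-ups and obeys
\[
0\le u-\varphi \le C\,r^\kappa \qquad \text{on}\ \ Q_r(x_0,t_0):=B_r(x_0)\times (t_0-r^2,t_0].
\]
Second, using $u_t\ge c$ propagated down to the thin boundary by a parabolic Hopf/barrier argument compatible with the Signorini condition, I would establish a \emph{shedding estimate}: the contact sets $\Lambda(u(\cdot,t))$ are monotone decreasing in $t$, and at a point $(x_0,t_0)\in\Gamma_{\ge\kappa}$ one has $u(x_0,t_0+s)-\varphi(x_0)\ge c\,s$, so $x_0\notin\Lambda(u(\cdot,t_0+s))$ for every $s>0$. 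Quantifying this against the parabolic growth $r^\kappa$ shows that, within the slab $\{t_0<t<t_0+s\}$, the space-time set $\mathcal{D}_\kappa:=\{(x,t):x\in\Gamma_{\ge\kappa}(u(\cdot,t))\}$ lies in a tubular neighborhood of $\Gamma_{\ge\kappa}(u(\cdot,t_0))$ of parabolic width controlled by a positive power of $s$.

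Third, I would apply the GMT lemma from \cite{FRS19} (Lemma~\ref{lem.CL}) to $\mathcal{D}_\kappa$, with time playing the role of the parameter $\lambda$. A Fubini-type slicing combined with the shedding estimate then produces a set $\mathcal{E}\subset(-1,0]$ of Lebesgue measure zero outside of which each fiber $\mathcal{D}_\kappa\cap\{t=t_\circ\}$ has Hausdorff dimension at most $n-1-\alpha(\kappa)$. Choosing $\kappa$ slightly greater than $2$ gives $\alpha_\circ=\alpha(\kappa)>0$ depending only on $n$; combined with the $C^{1,\alpha}$ regularity of the regular set from \cite[Chapter 11]{DGPT17}, this yields the theorem.

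\textbf{Main obstacle.} The delicate step is propagating $u_t\ge c$ to the thin boundary with uniform quantitative control near degenerate free-boundary points: one needs a parabolic Hopf/comparison principle compatible with the Signorini condition and stable under the moving contact set, which is substantially more subtle than its elliptic counterpart and is precisely the reason the condition \eqref{eq.condparab} is imposed in the strengthened form available via the strong maximum principle. Once this barrier is in hand, the anisotropic parabolic scaling (space homogeneity $\kappa$ versus time homogeneity $\kappa/2$) is exactly what prevents the dimension loss from reaching a full unit, producing the partial exponent $\alpha_\circ$ rather than the $n-2$ bound of the elliptic theorem.
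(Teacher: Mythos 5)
Your overall strategy is the right one — treat time as the monotone parameter, normalize \eqref{eq.condparab} via the strong maximum principle to $u_t\ge 1$ in $\{x_{n+1}\ge \tfrac12\}$, prove a shedding estimate (the paper's Proposition~\ref{prop.boundtime}: if $(x_\circ,t_\circ)\in\Gamma_{\ge\kappa}$ then $u(\cdot,t_\circ+C_*t^{\kappa-1})>\varphi$ in $B_t'(x_\circ')$, proved with a parabolic barrier, \cite[Lemma 11.5]{DGPT17}), and feed it into Lemma~\ref{lem.CL}. But there are two genuine gaps. First, you never explain where $\alpha_\circ$ comes from or why you are allowed to ``take $\kappa$ slightly larger than $2$.'' The degenerate set contains points of order exactly $2$, which belong to no $\Gamma_{\ge 2+\eps}$, and a priori it could also contain points of every order in $(2,2+\eps)$. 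The paper's $\alpha_\circ$ is not produced by the anisotropic scaling, as you suggest at the end; it is imported from the frequency gap of \cite[Proposition 4.5]{Shi18}, which says no free boundary points have parabolic frequency in $(2,2+\alpha_\circ)$ for some $\alpha_\circ(n)>0$, giving the decomposition $\Gamma={\rm Reg}(u)\cup\Gamma_2\cup\Gamma_{\ge 2+\alpha_\circ}$. Only then does the shedding estimate with $\kappa=2+\alpha_\circ$ (i.e.\ $\gamma=1+\alpha_\circ$, $\beta=n$ in Lemma~\ref{lem.CL}) yield $n-1-\alpha_\circ$ for that stratum.

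Second, the stratum $\Gamma_2$ itself is untreated in your plan, and it cannot be absorbed into the GMT lemma with $\beta=n$: there the shedding exponent is only $\gamma=\kappa-1=1$, which gives the useless bound $n-1$. The paper handles it separately: since $u_t\ge 0$, the parabolically $2$-homogeneous blow-up is at most linear and nondecreasing in time, hence time-independent, so it is a harmonic quadratic polynomial in $x$; a Whitney-extension/Monneau argument (as in Proposition~\ref{prop.singdim}) then shows the spatial projection $\pi_x\Gamma_2$ is contained in countably many $(n-1)$-dimensional $C^1$ manifolds, allowing $\beta=n-1$, $\gamma=1$ and the bound $n-2\le n-1-\alpha_\circ$ for a.e.\ time. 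Without both of these ingredients the proof does not close. (A minor further point: your pointwise claim $u(x_0,t_0+s)-\varphi(x_0)\ge cs$ is not what the barrier argument gives on the thin space; the correct conclusion is the clearing of a whole ball $B_t'(x_\circ')$ after time $C_*t^{\kappa-1}$.)
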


When $\varphi$ is analytic, then the free boundary is actually $C^\infty$ around regular points.
Higher regularity of the free boundary is also expected for smooth obstacles, but so far it is only known when $\varphi$ is analytic; see \cite{BSZ17}.

It is important to remark that the parabolic case presents some extra difficulties with respect to the elliptic one, and in fact we do not know if a result analogous to Theorem~\ref{thm.MAIN01} holds in this context.
This means that points of order $\infty$ could a priori still appear for all times (even though by Theorem~\ref{thm.parab_MAIN} they are lower-dimensional for almost every time).

\subsection{The fractional obstacle problem}
The Signorini problem in $\R^{n+1}$ can be reformulated in terms of a fractional obstacle problem with operator $(-\Delta)^{\frac12}$ in $\R^n$. Conversely, fractional obstacle problems (with the operator $(-\Delta)^s$, $s\in (0, 1)$) can also be reformulated in terms of thin obstacle problems with weights. In this work we will generally deal with the thin obstacle problem with a weight, so that the results from subsection \ref{ssec.ourresults} can also be formulated for the fractional obstacle problem.

Given an obstacle $\varphi\in C^\infty(\R^n)$ such that
\begin{equation}
\label{eq.varphi_bound}
\{\varphi > 0\}\subset\subset \R^n,
\end{equation}
the fractional obstacle problem with obstacle $\varphi$ in $\R^n$ ($n \ge 2$) is
\begin{equation}
\label{eq.fract_obst}
  \left\{ \begin{array}{rcll}
  (-\Delta)^s v&=&0 & \textrm{ in } \R^n \setminus \{v = \varphi\}\\
   (-\Delta)^s v&\ge&0 & \textrm{ in } \R^n\\
  v&\ge& \varphi & \textrm{ in } \R^n \\
   v(x) & \to  & 0 & \textrm{ as } |x|\to \infty.
  \end{array}\right.
\end{equation}

Solutions to the fractional obstacle problem are $C^{1, s}$ (see \cite{CSS08}). We denote $\Lambda(v) = \{v = \varphi\}$ the contact set, and $\Gamma(v) = \de\Lambda(v) $ the free boundary. As in the Signorini problem (which corresponds to $s = \frac12$) the free boundary can be partitioned into regular points
\[
{\rm Reg}(v) := \left\{x'\in \Gamma(v) : 0< cr^{1+s}\le \sup_{B_r'(x')} (v-\varphi) \le Cr^{1+s},\quad\forall r\in (0, r_\circ)\right\},
\]
and non-regular (or degenerate) points,
\begin{equation}
\label{eq.degv}
{\rm Deg}(v) := \left\{x'\in \Gamma(v) : 0\le \sup_{B_r'(x')} (v - \varphi) \le Cr^{2},\quad\forall r\in (0, r_\circ)\right\}.
\end{equation}
More precisely, if we denote by $\Gamma_{\kappa}(v)$ the free boundary points of order $\kappa$, then the free boundary $\Gamma(v)$ can be further stratified analogously to \eqref{eq.FBst} as
\begin{equation}
\label{eq.FBst_s}
 \Gamma(v) = \Gamma_{1+s} \cup \bigg(\bigcup_{m\ge 1} \Gamma_{2m}\bigg)\cup \bigg(\bigcup_{m\ge 1} \Gamma_{2m+2s}\bigg)\cup \bigg(\bigcup_{m\ge 1} \Gamma_{2m+1+s}\bigg)\cup \Gamma_* \cup \Gamma_{\infty}.
\end{equation}

Here, $\Gamma_{1+s} = {\rm Reg}(v) $ is the set of regular points (\cite{CSS08, Sil07}). Again, it is an open subset of the free boundary,  which is smooth. Similarly, $\Gamma_{2m}$  for $m\ge 1$  are often called singular points, and are those where the contact set has zero measure (see \cite{GR19}). Together with the sets $\Gamma_{2m+2s}$ and $\Gamma_{2m+1+s}$ for $m \ge 1$, they are an $(n-1)$-dimensional rectifiable subset of the free boundary, \cite{GR19, FS19}. Finally, $\Gamma_*$ denotes the set containing the remaining homogeneities (except infinite), and has dimension $n-2$; and $\Gamma_{\infty}$ denotes those boundary points where the solution is approaching the obstacle {\em faster} than any power (i.e., at infinite order). As before, the set $\Gamma_{\infty}$ could have dimension even higher than $n-1$.

The type of result we want to prove in this setting regarding regularity for most solutions is concerned with global perturbations of the obstacle (rather than boundary perturbations, as before). That is, we will consider obstacles fulfilling \eqref{eq.varphi_bound}.

We define the set of solutions indexed by $\lambda\in[0, 1]$ to the fractional obstacle problem as
\begin{equation}
\label{eq.fract_obst_lam}
  \left\{ \begin{array}{rcll}
  (-\Delta)^s v_\lambda &=&0 & \textrm{ in } \R^n \setminus \{v_\lambda = \varphi\}\\
   (-\Delta)^s v_\lambda&\ge&0 & \textrm{ in } \R^n\\
  v_\lambda &\ge& \varphi -\lambda & \textrm{ in } \R^n \\
   v_\lambda (x) & \to  & 0 & \textrm{ as } |x|\to \infty.
  \end{array}\right.
\end{equation}

Then, our main result reads as follows.
\begin{thm}
\label{thm.MAIN1}
Let $v_\lambda$ be any family of solutions solving \eqref{eq.fract_obst_lam}, for some obstacle $\varphi\in C^\infty$ fulfilling \eqref{eq.varphi_bound}. Then, we have 
\[
\dim_{\rm \mathcal{H}} \big({\rm Deg}(v_\lambda)\big) \le n-2,\quad\textrm{for a.e.}\quad\lambda\in [0, 1],
\]
where ${\rm Deg}(v_\lambda)$ is defined by \eqref{eq.degv}.

In other words, for a.e. $\lambda\in [0, 1]$, the free boundary $\Gamma(v_\lambda)$ is a $C^\infty$ $(n-1)$-dimensional   manifold, up to a closed subset of Hausdorff dimension $n-2$.
\end{thm}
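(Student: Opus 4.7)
The plan is to reduce Theorem~\ref{thm.MAIN1} to a weighted thin obstacle problem with a monotone family of solutions, and then invoke the same machinery that underlies Theorem~\ref{thm.MAIN0}.

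\textbf{Reduction to the weighted Signorini setting.} Let $V_\lambda$ denote the Caffarelli--Silvestre extension of $v_\lambda$ to $\R^{n+1}_+$, so that $L_a V_\lambda = 0$ with $L_a u := \mathrm{div}(x_{n+1}^{1-2s}\nabla u)$, and $V_\lambda$ solves a weighted Signorini problem on $\{x_{n+1}=0\}$ with obstacle $\varphi-\lambda$. Set $\widetilde V_\lambda := V_\lambda + \lambda$; this family solves the \emph{same} weighted thin obstacle problem but with fixed obstacle $\varphi$, and its free boundary and stratification \eqref{eq.FBst_s} coincide with those of $v_\lambda$. By the comparison principle for the fractional obstacle problem, $0 \le \widetilde V_{\lambda+\eps} - \widetilde V_\lambda \le \eps$. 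The compact support assumption \eqref{eq.varphi_bound} lets me fix a ball $B_R \supset \{\varphi > 0\}$; applying a weighted Harnack inequality then yields
\[
\widetilde V_{\lambda+\eps} - \widetilde V_\lambda \ge c(R)\,\eps \quad\text{on}\quad \partial B_R \cap \{x_{n+1}\ge \tfrac12\},
\]
which is the weighted analog of condition \eqref{eq.cond_glam} for $\widetilde V_\lambda$ on $B_R^+$.

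\textbf{Running the weighted version of Theorem~\ref{thm.MAIN0}.} With the reduction in place I invoke the weighted version of Theorem~\ref{thm.main000} and Proposition~\ref{prop.2mgrow}. The mechanism is a quantitative separation: strict monotonicity $\widetilde V_{\lambda+\eps} - \widetilde V_\lambda \ge c\,\eps$ combined with the growth bound $\widetilde V_\lambda - \varphi \le Cr^\kappa$ at $\Gamma_{\ge\kappa}$ points forces such a point to leave the contact set once $\lambda$ is raised by more than $Cr^\kappa/c$, so the $\lambda$-set of ``high-order'' points at scale $r$ near any fixed $x_0$ is contained in an interval of length $\lesssim r^\kappa$. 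Feeding this into the GMT covering lemma (Lemma~\ref{lem.CL} from \cite{FRS19}) gives
\[
\dim_{\mathcal{H}} \Gamma_{\ge \kappa}(\widetilde V_\lambda) \le n-\kappa+1 \quad\text{for a.e.\ }\lambda,\ 3 \le \kappa \le n+1,
\]
and emptiness of $\Gamma_\infty$ for a.e.\ $\lambda$ (the weighted analog of Theorem~\ref{thm.MAIN01}). Taking $\kappa = 3$ handles every stratum of homogeneity $\ge 3$. The remaining low-order pieces are $\Gamma_2$ (singular points) and, only when $s < \tfrac12$, $\Gamma_{2+2s}$; these are controlled by dedicated Monneau-type arguments paralleling those used in the Signorini case, exploiting higher-order expansions at such points together with the strict monotonicity in $\lambda$. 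Combined with the a priori bound $\dim_{\mathcal{H}} \Gamma_* \le n-2$ from \cite{FS19, GR19}, this yields $\dim_{\mathcal{H}} {\rm Deg}(v_\lambda) \le n-2$ for a.e.\ $\lambda$, as claimed.

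\textbf{Main obstacle.} The principal technical burden is to port every piece of the Signorini proof to the weighted operator $L_a$ with $a = 1-2s \in (-1,1)$: weighted growth estimates at degenerate points, weighted Hopf/Harnack estimates for the $\lambda$-increments, and weighted Almgren- and Monneau-type monotonicity formulae for the classification of blow-ups and for the treatment of the low-order strata. While each of these is available individually in the fractional obstacle literature (\cite{CSS08, GR19, FS19}), knitting them together with the monotonicity in $\lambda$ to produce the uniform separation property is the real work. Once the separation property is established for $L_a$, the combinatorial GMT step carries over to the fractional setting without change.
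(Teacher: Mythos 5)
Your proposal is correct and follows essentially the same route as the paper: add $\lambda$ to $v_\lambda$, take the Caffarelli--Silvestre extension to obtain a monotone family for the weighted thin obstacle problem with the fixed obstacle $\varphi$, verify \eqref{eq.uassump} via the Harnack inequality, and then apply the (already weighted) machinery of Sections~\ref{sec.Sch}--\ref{sec.mainresults}, i.e.\ Corollary~\ref{cor.COR}. The only bookkeeping slip is the choice $\kappa=3$: one should take $\kappa=2+2s$ in Theorem~\ref{thm.main000} (giving dimension $n-\kappa+2s=n-2$, valid for all $s$), with the intermediate homogeneities in $(2,2+2s)$ covered by Proposition~\ref{prop.smallother} and $\Gamma_2$ by Theorem~\ref{thm.2grow}; note also that since Sections~2--4 of the paper are stated for general $L_a$, the ``porting'' you flag as the main obstacle is already done and requires no further work.
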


As before, we actually prove more precise results (see Theorem~\ref{thm.main000} and Proposition~\ref{prop.2mgrow}). We establish an estimate for the Hausdorff dimension of $\Gamma_{\ge \kappa}(v_\lambda)$. We show that, for $2\le \kappa-2s \le n$, then $\dim_{\mathcal{H}}\Gamma_{\ge \kappa}(v_\lambda)\le n-\kappa+2s$, and if $\kappa > n+2s$, then $\Gamma_{\ge \kappa}(v_\lambda)$ is empty for almost every $\lambda \in [0, 1]$.
Similarly, we can also reduce the regularity of the obstacle to $\varphi\in C^{4, \alpha}$ so that, for a.e. $\lambda\in [0, 1]$, $\dim_{\rm \mathcal{H}} \left({\rm Deg}(v_\lambda)\right) \le n-2$ (in particular, the free boundary $\Gamma(v_\lambda)$ is $C^{3, \alpha}$ up to a subset of dimension $n-2$ for a.e. $\lambda\in [0, 1]$; see \cite{JN17, AR19}).

Theorem~\ref{thm.MAIN1} is analogous to Theorem~\ref{thm.MAIN0}. On the other hand, we also have that:

\begin{thm}
\label{thm.MAIN11}
Let $v_\lambda$ be any family of solutions solving \eqref{eq.fract_obst_lam}, for some obstacle  $\varphi\in C^\infty$  fulfilling \eqref{eq.varphi_bound}. Then, there exists $\mathcal{E}\subset [0,1]$ such that $\dim_{\mathcal{H}} \mathcal{E} = 0$ and
\[
\Gamma_{\infty}(v_\lambda) = \varnothing,
\]
for all $\lambda \in [0, 1]\setminus \mathcal{E}$.

Furthermore, for every $h > 0$, there exists some $\mathcal{E}_h\subset [0, 1]$ such that $\dim_{\mathcal{M}} \mathcal{E}_h = 0$ and 
\[
\Gamma_\infty(v_\lambda)\cap B_{1-h} = \varnothing,
\]
for every $\lambda\in [0, 1]\setminus \mathcal{E}_h$. 
\end{thm}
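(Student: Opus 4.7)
The proof will follow closely the scheme of Theorem~\ref{thm.MAIN01}, adapted to the fractional setting and relying on a refinement of the arguments behind Theorem~\ref{thm.MAIN1} and Proposition~\ref{prop.2mgrow}. The first step is to set up a quantitative monotonicity in $\lambda$: by comparison for \eqref{eq.fract_obst_lam}, $\lambda \mapsto v_\lambda$ is non-increasing, and for $\lambda_1<\lambda_2$ the difference $w := v_{\lambda_1}-v_{\lambda_2}$ is nonnegative, satisfies $\fls w = 0$ outside $\Lambda(v_{\lambda_1})\cup\Lambda(v_{\lambda_2})$, decays to zero at infinity, and equals exactly $\lambda_2-\lambda_1$ on $\Lambda(v_{\lambda_2})$. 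A Harnack / strong maximum principle argument for $\fls$ upgrades this to the strict estimate $w \ge c(\lambda_2-\lambda_1)$ on any prescribed compact set, with $c>0$ depending only on the obstacle and the set.

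The main work is to quantify the conclusion of Proposition~\ref{prop.2mgrow}. Fix $h>0$, and for each $\kappa>n+2s$ set
\[
\mathcal{E}_h^\kappa := \bigl\{\lambda\in[0,1] : \Gamma_{\ge\kappa}(v_\lambda)\cap B_{1-h}\ne\varnothing\bigr\}.
\]
I would combine the upper growth estimate $\sup_{B_r(x_0)}(v_\lambda-\varphi+\lambda)\le C_\kappa r^\kappa$ at any point $x_0\in\Gamma_{\ge\kappa}(v_\lambda)$ with the strict monotonicity above, a covering argument, and the GMT Lemma~\ref{lem.CL}, to obtain $\dim_{\mathcal{M}}\mathcal{E}_h^\kappa \le \beta(\kappa)$, with $\beta(\kappa)\to 0$ as $\kappa\to\infty$. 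Since $\Gamma_\infty(v_\lambda)\subset\Gamma_{\ge\kappa}(v_\lambda)$ for every finite $\kappa$, the set $\mathcal{E}_h$ of the statement lies in $\bigcap_{\kappa} \mathcal{E}_h^\kappa$, yielding $\dim_{\mathcal{M}}\mathcal{E}_h=0$.

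The first part of the statement then follows by exhausting $B_1$ from within: writing $\mathcal{E}=\bigcup_{n\ge 1}\mathcal{E}_{1/n}$, each $\mathcal{E}_{1/n}$ has Minkowski (hence Hausdorff) dimension $0$, and countable stability of Hausdorff dimension gives $\dim_{\mathcal{H}}\mathcal{E}=0$.

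The main obstacle is the quantitative step $\beta(\kappa)\to 0$. The qualitative vanishing $\Gamma_{\ge\kappa}(v_\lambda)=\varnothing$ for a.e.\ $\lambda$ provided by Proposition~\ref{prop.2mgrow} only produces a Lebesgue-null exceptional set, which is not enough; one must convert the growth of $v_\lambda-\varphi+\lambda$ at high-order points into a genuine trade-off between dimension in $x$-space and dimension in $\lambda$-space, which is precisely what the quantitative form of Lemma~\ref{lem.CL} provides. A secondary technical issue is the nonlocal nature of $\fls$; this is handled by passing to the Caffarelli--Silvestre extension, after which the arguments localize and become structurally identical to those of the Signorini case in Theorem~\ref{thm.MAIN01}.
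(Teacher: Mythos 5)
Your proposal follows essentially the same route as the paper: pass to the Caffarelli--Silvestre extension and normalize via $w_\lambda=v_\lambda+\lambda$ so that the family satisfies \eqref{eq.uassump}, then apply the quantitative clearing estimate of Proposition~\ref{prop.kap} at points of $\Gamma_{\ge\kappa}$ together with Lemmas~\ref{lem.CL} and~\ref{lem.CL2} (i.e.\ Theorem~\ref{thm.main000} and Corollary~\ref{cor.main000}) and let $\kappa\to\infty$, exactly as in the paper's reduction to the Signorini case. The only caveats are cosmetic: the relevant intermediate result is Theorem~\ref{thm.main000}/Proposition~\ref{prop.kap} rather than Proposition~\ref{prop.2mgrow}, and the lower bound $v_{\lambda_1}-v_{\lambda_2}\ge c(\lambda_2-\lambda_1)$ holds only away from the contact set (in general it degenerates like ${\rm dist}^{2s}(\cdot,\Lambda)$ as in Lemma~\ref{lem.ulambda}), but it is used only on $\{|x_{n+1}|\ge\frac12\}$ where it is valid.
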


That is, analogously to Theorem~\ref{thm.MAIN01}, we can also control the size of $\lambda$ for which the free boundary points of infinite order exist.

\subsection{Examples of degenerate free boundary points}
\label{ssec.ex}
Let us finally comment on the non-regular part of the free boundary, that is,
\begin{equation}
\label{eq.degfb}
{\rm Deg}(u) = \Gamma_{\rm even} \cup \Gamma_{\rm odd} \cup \Gamma_{\rm half} \cup \Gamma_* \cup \Gamma_{\infty}.
\end{equation}

The main open questions regarding each of the subsets of the degenerate part of the free boundary are:
\\[0.2cm]
{\bf Q1:} Are there non-trivial examples (e.g., the limit of regular points) of singular points in $\Gamma_{\rm even}$?
\\[0.2cm]
{\bf Q2:} Do points in $\Gamma_{\rm odd}$ exist?
\\[0.2cm]
{\bf Q3:} Can one construct arbitrary contact sets with free boundary formed entirely of $\Gamma_{\rm half}$ (alternatively, do they exist apart from the homogeneous solutions)?
\\[0.2cm]
{\bf Q4:} Do points in $\Gamma_*$ exist?
\\[0.2cm]
{\bf Q5:} How big can the set $\Gamma_\infty$ be?
\\[0.2cm]
In this paper, we answer questions Q1, Q3, and Q5. (Questions Q2 and Q4 remain open.) 

Let us start with Q1. The set $\Gamma_{\rm even} = \bigcup_{m\ge 1} \Gamma_{2m}$, often called the set of singular points, is an $(n-1)$-dimensional subset of the free boundary. Examples of free boundary points belonging to $\Gamma_{\rm even}$ are easy to construct as level sets of homogeneous harmonic polynomials, such as $x_1^2-x_{n+1}^2$, in which case we have $\Gamma = \Gamma_{\rm even} = \{x_1 = 0\}$. They are also expected to appear in less trivial situations but, as far as we know, none has been constructed so far that appears as limit of regular points (i.e., on the boundary of the interior of the contact set). Here, we show that:
\begin{prop}
\label{prop.singpoints}
There exists a  boundary data $g$ such that the free boundary of the solution to the Signorini problem \eqref{eq.thinobst_intro} with $\varphi = 0$ has a sequence of regular points (of order $3/2$) converging to a singular point (of order $2$).
\end{prop}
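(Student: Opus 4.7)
The plan is to give an explicit construction in dimension $n=1$ (i.e.\ in $\R^2$ with thin space the horizontal axis); a trivial product then provides examples for larger $n$. The starting point is the explicit singular Signorini solution
\[
u_0(x_1,x_2)=x_1^2-x_2^2
\]
in $B_1^+$, which solves \eqref{eq.thinobst_intro} with $\varphi\equiv 0$ and boundary datum $g_0(\theta)=\cos(2\theta)$, and whose contact set on the thin line is reduced to the single (isolated singular) point $\{0\}$. The strategy is to modify $g_0$ so as to create a sequence of small new contact intervals $I_k=[a_k^-,a_k^+]\subset(0,1)$ concentrated around the points $(1/k,0)$, with $I_k\to 0^+$, each with regular ($3/2$-order) endpoints, without disturbing the quadratic behavior of $u$ at the origin.

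Concretely, for each $k\ge 1$ fix a smooth bump $\eta_k\ge 0$ supported in a small arc of $\partial B_1\cap\{x_2>0\}$ directly above the thin-space point $(1/k,0)$, and denote by $\psi_k$ its harmonic extension to $B_1^+$ with vanishing Neumann trace on the thin line. For a sequence $c_k>0$ chosen as below we take
\[
g=g_0-\sum_{k\ge 1} c_k\eta_k
\]
and let $u$ be the solution to \eqref{eq.thinobst_intro} with boundary datum $g$ and $\varphi\equiv 0$. If a single $c_k$ is small enough then, by comparison with the unconstrained harmonic extension $u_0-c_k\psi_k$, the Signorini constraint activates only on a short interval around $(1/k,0)$; for generic choices of $\eta_k$ the resulting contact set on the thin line gains exactly one connected component $I_k$ with two distinct endpoints which, by the half-density characterization or a direct inspection of the blow-up, are regular free boundary points of order $3/2$.

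It remains to choose $\{c_k\}$ so that (i) the contact intervals $I_k$ persist and remain pairwise disjoint when all perturbations are turned on simultaneously, and (ii) the cumulative perturbation is negligible at the origin, in the sense that $u(x)=u_0(x)+o(|x|^2)$ as $x\to 0$. Property (ii) can be arranged because each $\psi_k$ is the Poisson extension of a bump supported away from the origin, so it is smooth and vanishes to arbitrary order at $0$; quantitatively, $|\psi_k(x)|\le C_N\,k^N|x|^N$ in $B_{1/2}^+$ for any $N$, and picking $c_k\le 2^{-k}$ (say) yields $\sum_k c_k\psi_k=o(|x|^N)$ near $0$. Together with the monotonicity of the Signorini operator in the boundary data and with the fact that the contact intervals $I_k$ are localized far from $0$ relative to their size, this yields $u=u_0+o(|x|^2)$ near $0$, so that the blow-up of $u$ at $0$ is exactly $x_1^2-x_2^2$, identifying $0\in\Gamma_2(u)$ as a singular point of order $2$. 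Since the endpoints of each $I_k$ are regular and accumulate at $0$, this produces the desired sequence of regular points converging to a singular point.

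The main technical obstacle is to guarantee, uniformly in $k$, that each perturbation actually opens up a contact \emph{interval} with two distinct regular endpoints, rather than a tangential singleton (which would create an additional isolated singular point instead of two regular ones). Quantitatively this requires coordinating the width of the arc supporting $\eta_k$ with $c_k$, using the nondegeneracy of $u_0$ near $(1/k,0)$ and a comparison with an explicit barrier to ensure $c_k\psi_k > u_0$ on an interval of positive length on the thin space. Once this local picture is secured, the rapid decay of $\{c_k\}$ guarantees that the perturbations do not interact with each other or with the behavior at the origin, and a standard Almgren frequency computation at $0$ then confirms that the blow-up is of order exactly $2$, completing the proof.
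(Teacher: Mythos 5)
Your construction has a fatal quantitative gap at its core. The function $\psi_k$ is the harmonic extension of a nonnegative bump $\eta_k\not\equiv 0$ on $\de B_1\cap\{x_2>0\}$ with vanishing Neumann trace on the thin line; after even reflection it is a \emph{positive} harmonic function in $B_1$, so by the mean value property $\psi_k(0)=\frac1\pi\int\eta_k>0$. It therefore cannot vanish at the origin, let alone to infinite order, and the estimate $|\psi_k(x)|\le C_N k^N|x|^N$ is false. Worse, in order for the perturbation to create any contact near $(1/k,0)$, where $u_0=1/k^2>0$, you need $c_k\psi_k(1/k,0)\gtrsim 1/k^2$ (otherwise $u_0-\sum_j c_j\psi_j$ stays positive on the thin line there and, being harmonic with zero Neumann data, is itself the Signorini solution with no new contact). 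By Harnack this forces $c_k\psi_k(0)\gtrsim 1/k^2$, so the total subtracted harmonic function is bounded below by a fixed positive constant near the origin. This is incompatible with $u=u_0+o(|x|^2)$; in fact it pushes the solution down by a definite amount near $0$, and the expected outcome is that the contact set swallows a whole interval around the origin, destroying the free boundary point you are trying to preserve. The conclusion that $0\in\Gamma_2(u)$ is therefore unjustified, and the mechanism you propose for preserving it cannot work as stated.

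There is a second, independent gap: you assert that the endpoints of each new contact interval $I_k$ are regular points of order $3/2$ ``by the half-density characterization or a direct inspection of the blow-up,'' but positive density of the contact set does not imply regularity for the Signorini problem --- endpoints of a contact interval can a priori have order $7/2$, $11/2$, etc.\ (this is exactly the set $\Gamma_{\rm half}$, and Proposition~\ref{prop.secondexample} of the paper constructs contact sets whose entire boundary consists of such points). Ruling this out is the genuinely hard part of any such construction. The paper's proof circumvents both difficulties by working with a strictly superharmonic obstacle $\varphi_t=t-(1-x_1^2)^2-4x_2^2$, for which \cite{BFR18} guarantees that \emph{only} orders $3/2$ and $2$ can occur and provides a uniform non-degeneracy estimate; it then runs a topological continuity argument in $t$ (locating the first time $t^*$ at which the contact set connects the two points $p_\pm$) to force the coexistence of a singular point and nearby regular points. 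If you want to salvage a perturbative construction, you would need a replacement for both the non-degeneracy at the origin and the classification of the orders of the new free boundary points; neither is supplied by your argument.
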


The proof of the previous result is given in Section~\ref{sec.examples}. In contrast to what occurs with the classical obstacle problem, the construction of singular points does not seem to immediately arise from continuous perturbations of the boundary value under symmetry assumptions. Instead, one has to be aware that there could appear other points (different from regular, but not in $\Gamma_{\rm even}$). Thus, our strategy is based on being in a special setting that avoids the appearance of higher order free boundary points.

On the other hand, regarding question Q3, it is known that examples of such points can be constructed through homogeneous solutions,  in which case they can even appear as limit of regular (or lower frequency) points (see \cite[Example 1]{CSV19}). 
Until now, however, it was not clear whether such points could appear in non-trivial (say, non-homogeneous) situations.

We show that, given \emph{any} smooth domain $\Omega\subset \R^n$, one can find a solution to the Signorini problem whose contact set is exactly given by $\Omega$, and whose free boundary is entirely made of points of order $\frac72$ (or $\frac{11}{2}$, etc.). More generally, we show that given $\Omega$, the contact set for the fractional obstacle problem can be made up entirely of points belonging to $\bigcup_{m\ge 1} \Gamma_{2m+1+s}$ (the case $s = \frac12$ corresponding to the Signorini problem).
\begin{prop}
\label{prop.secondexample}
Let $\Omega\subset\R^n$ be any given $C^\infty$ bounded domain, and let $m\in \N$. Then, there exists an obstacle $\varphi\in C^\infty(\R^n)$ with $\varphi\to 0$ at $\infty$, and a global solution to the obstacle problem
\[
  \left\{ \begin{array}{rcll}
  (-\Delta)^{s} u&\ge&0 & \textrm{ in } \R^n\\
    (-\Delta)^{s} u&=&0 & \textrm{ in } \{u > \varphi\}\\
    u & \ge & \varphi & \textrm{ in } \R^n,\\
    u(x) & \to & 0& \textrm{ as }|x|\to \infty,
  \end{array}\right.
\]
such that the contact set is $\Lambda(u) = \{u = \varphi\} = \Omega$, and all the points on the free boundary $\de\Lambda(u)$ have frequency $2m+1+s$.
\end{prop}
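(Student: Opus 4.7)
The plan is to construct $u$ and $\varphi$ by solving the exterior Dirichlet problem for $\fls$ on $\R^n \setminus \overline{\Omega}$ with carefully chosen interior data, tuning the boundary asymptotics of $u-\varphi$ at $\de\Omega$ from outside so that it vanishes precisely to order $d^{2m+1+s}$. Concretely, given any $\tilde g \in C^\infty_c(\R^n)$, let $u_{\tilde g}$ be the unique solution of $\fls u = 0$ in $\R^n \setminus \overline{\Omega}$ with $u = \tilde g$ in $\overline{\Omega}$ and $u \to 0$ at infinity. Setting $\varphi := \tilde g$, one obtains a solution to the obstacle problem once $\tilde g$ is chosen to enforce: (i) $u_{\tilde g} > \varphi$ strictly outside $\overline\Omega$, with blow-up of order $2m+1+s$ at $\de\Omega$; (ii) $\fls u_{\tilde g} \ge 0$ on $\overline\Omega$.

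The crucial input is the boundary asymptotic expansion from the regularity theory of fractional Dirichlet problems on smooth domains (cf. \cite{JN17, AR19}): for any integer $N$,
\[
u_{\tilde g}(x) - \tilde g(x) = \sum_{k=0}^N a_k(x^*;\tilde g)\, d(x)^{k+s} + o\bigl(d(x)^{N+s}\bigr)
\]
near $\de\Omega$ from outside, where $d(x)=\mathrm{dist}(x,\overline\Omega)$, $x^*$ is the closest point on $\de\Omega$, and each coefficient $a_k(\,\cdot\,;\tilde g)\in C^\infty(\de\Omega)$ depends linearly on $\tilde g$. The next step is to select $\tilde g$ so that $a_0\equiv a_1\equiv\cdots\equiv a_{2m}\equiv 0$ on $\de\Omega$, while $a_{2m+1}>0$. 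Representing the linear functionals via kernels as $a_k(x^*;\tilde g)=\int K_k(x^*,y)\,\tilde g(y)\,dy$, the first task is a finite system of orthogonality constraints on $\tilde g$, which the infinite-dimensional space of admissible test functions can accommodate, while the positivity $a_{2m+1}>0$ is an open condition that can be secured by starting from a seed $\tilde g_0$ (for instance, a large positive bump supported strictly inside $\Omega$) and adding small corrections.

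With such $\tilde g$ fixed and $\varphi = \tilde g$, the blow-up of $u-\varphi$ at any $x_0\in\de\Omega$ equals $a_{2m+1}(x_0)\,\langle \,\cdot\,,-\nu(x_0)\rangle_+^{2m+1+s}$ up to a dimensional constant, a $(2m+1+s)$-homogeneous profile, placing every free boundary point in $\Gamma_{2m+1+s}$. To close the argument I would secure global strict positivity $u-\varphi>0$ on $\R^n\setminus\overline\Omega$ by subtracting from $\varphi$ a small nonnegative smooth function supported away from $\overline\Omega$ (this does not affect any $a_k$, nor the value of $u=\varphi$ on $\overline\Omega$), and ensure $\fls u\ge 0$ on $\overline\Omega$ by augmenting the seed bump in $\Omega$ and then re-running the triangular cancellation of $a_0,\dots,a_{2m}$. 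The main obstacle — the technical heart of the proof — is the claim in the previous paragraph that the linear map $\tilde g\mapsto (a_0,\dots,a_{2m})$ is surjective onto $C^\infty(\de\Omega)^{2m+1}$. The strategy for this is to exhibit, for each $k\in\{0,\dots,2m\}$ and each smooth $\phi_k$ on $\de\Omega$, an explicit $\tilde g_k$ whose image has prescribed $a_k$-component and controlled effect on the remaining slots; the necessary non-degeneracy of the associated kernels $K_k$ should follow from their explicit description in terms of the fractional Poisson kernel for the exterior of $\Omega$, which is real-analytic and strictly positive.
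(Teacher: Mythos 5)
Your approach is genuinely different from the paper's. The paper constructs the solution explicitly as $u = d^{2m+1+s} + I_{2s}(-\bar f)$, where $f=(-\Delta)^s d^{2m+1+s}\in C^\infty(\overline{\Omega^c})$ by Grubb's theory and $\bar f$ is a smooth extension of $f$ into $\Omega$, and it defines the obstacle \emph{a posteriori} as $\varphi = I_{2s}(-\bar f)$; the only thing left to verify is the supersolution inequality $(-\Delta)^s d^{2m+1+s}\ge \bar f$ inside $\Omega$, which is reduced to the strict positivity of the constant $C_{m,s}$ in $(-\Delta)^s (x_1)_+^{2m+1+s} = C_{m,s}(x_1)_-^{2m+1-s} + C^\infty$ together with a delicate $o(|x_1|^{2m+1-s+\eps})$ bound on the error produced by flattening $\de\Omega$. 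You instead fix $\varphi=\tilde g$ equal to the interior Dirichlet data and try to tune $\tilde g$.

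As written, your proposal has two genuine gaps. First, the step you yourself call the technical heart --- surjectivity of $\tilde g\mapsto (a_0,\dots,a_{2m})$ onto $C^\infty(\de\Omega)^{2m+1}$ --- is not ``a finite system of orthogonality constraints'': each condition $a_k(\cdot\,;\tilde g)\equiv 0$ is one scalar equation per point of $\de\Omega$, hence infinitely many constraints, and you give no argument that the linear subspace $\{a_0=\cdots=a_{2m}=0\}$ is nontrivial, let alone that it contains an element with $a_{2m+1}>0$ everywhere on $\de\Omega$ (your positive seed bump produces $a_0>0$ by a Hopf-type lemma, and the correction needed to annihilate $a_0$ is not small, so the openness of $a_{2m+1}>0$ does not help). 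Second, and more structurally, once $\tilde g$ is fixed your $u$ is completely determined, so $(-\Delta)^s u$ restricted to $\Omega$ is whatever it is; the requirement $(-\Delta)^s u\ge 0$ in $\Omega$ is an additional one-sided, infinite-dimensional constraint on $\tilde g$ that interacts with the cancellation conditions, and the proposed remedy of ``augmenting the seed bump and re-running the cancellation'' is circular: each step undoes the other, and no simultaneous-feasibility argument is offered. The paper's construction is designed precisely to avoid this coupling: since the obstacle is the Riesz potential of a freely chosen extension $\bar f$, one has $(-\Delta)^s u = (-\Delta)^s d^{2m+1+s}-\bar f$ in $\Omega$, whose sign can be arranged away from $\de\Omega$ by simply taking $\bar f$ small there, while near $\de\Omega$ it follows from $C_{m,s}>0$.
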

The proof of the previous proposition is constructive: we show a way in which such solutions can be constructed, using some results from \cite{Gru15,AR19}.

Finally, we also answer question Q5, that deals with the set $\Gamma_\infty$. 
Not much has been discussed about it in the literature, though its lack of structure was somewhat known by the community. 
For instance, the following result is not difficult to prove:

\begin{prop}
\label{prop.inf_points}
For any $\eps > 0$ there exists a non-trivial solution $u$ and an obstacle $\varphi\in C^\infty(\R^n)$ such that
\[
  \left\{ \begin{array}{rcll}
  (-\Delta)^{s} u&\ge&0 & \textrm{ in } \R^n\\
    (-\Delta)^{s} u&=&0 & \textrm{ in } \{u > \varphi\}\\
    u & \ge & \varphi & \textrm{ in } \R^n,
  \end{array}\right.
\]
and the boundary of the contact set, $\Lambda(u) = \{u = \varphi\}$, fulfils
\[
\dim_{\mathcal{H}} \de\Lambda(u) \ge n-\eps.
\]
\end{prop}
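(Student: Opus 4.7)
The plan is to construct an explicit solution by taking $u$ to be the constant $1$ and choosing the obstacle $\varphi$ so that $u-\varphi$ vanishes precisely on a prescribed nowhere-dense fractal set $K\subset\R^n$ with $\dim_{\mathcal{H}} K\ge n-\eps$. Since a constant function is $s$-harmonic throughout $\R^n$, the three obstacle-problem conditions will trivialize, and the problem reduces to a purely geometric one: produce such a $K$ and realize it as the zero set of a smooth nonnegative function.

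For the set $K$, I would take a Cantor set $C_\eps\subset[0,1]$ of Hausdorff dimension at least $1-\eps$ (for example, a self-similar middle-$\alpha$ set with $\alpha$ chosen small enough) and put $K:=C_\eps\times[0,1]^{n-1}$. Then $K$ is compact with empty interior, so $\partial K=K$; and the standard product bound $\dim_{\mathcal{H}}(A\times B)\ge\dim_{\mathcal{H}} A+\dim_{\mathcal{H}} B$ gives
\[
\dim_{\mathcal{H}} K\ge (1-\eps)+(n-1)=n-\eps.
\]

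Next, since $K$ is closed, the Whitney extension theorem yields $f\in C^\infty(\R^n)$ with $f\ge 0$, $\{f=0\}=K$, and $f$ together with all its derivatives vanishing identically on $K$. I would then set $u\equiv 1$ and $\varphi:=1-f\in C^\infty(\R^n)$. Since $u$ is constant, $(-\Delta)^s u\equiv 0$, so both $(-\Delta)^s u\ge 0$ in $\R^n$ and $(-\Delta)^s u=0$ on the non-contact set $\{u>\varphi\}=\R^n\setminus K$ hold trivially, while $u-\varphi=f\ge 0$ is the obstacle constraint. Hence $u$ solves the three conditions in the statement with contact set $\Lambda(u)=K$ and free boundary $\partial\Lambda(u)=\partial K=K$, giving $\dim_{\mathcal{H}}\partial\Lambda(u)\ge n-\eps$. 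The solution is non-trivial in the sense that $\Lambda(u)$ is a proper non-empty subset of $\R^n$, so it possesses a genuine free boundary.

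The main (and essentially only) ingredient is the exhibition of the nowhere-dense set $K$ of Hausdorff dimension arbitrarily close to $n$, which is a classical self-similar construction. I do not expect any serious obstacle in carrying out the argument, consistently with the paper's remark that the result is not difficult to prove.
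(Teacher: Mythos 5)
Your construction is correct, and it is essentially the same mechanism as the paper's: both proofs reduce the statement to the purely geometric facts that (i) there is a compact set $K$ with empty interior and $\dim_{\mathcal H}K\ge n-\eps$ (your Cantor-type product, using $\dim_{\mathcal H}(A\times B)\ge \dim_{\mathcal H}A+\dim_{\mathcal H}B$), and (ii) any closed set is the zero set of a smooth nonnegative function $f$; one then makes the obstacle touch an $s$-harmonic function tangentially exactly on $K$, so that $\Lambda(u)=K=\de K$. The only real difference is the choice of background solution. You take $u\equiv 1$ (globally $s$-harmonic), so the three conditions hold trivially; and indeed $u\equiv1$ is the genuine variational solution for the obstacle $\varphi=1-f$ with datum $1$ at infinity, by the maximum principle. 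The paper instead first produces (its Proposition~\ref{prop.Ccontact}) a \emph{non-constant} solution $u$ of an auxiliary obstacle problem whose obstacle is supported away from $B_1$ — so that $u$ is $s$-harmonic and strictly above that obstacle in $B_1$ — and then replaces the obstacle by $\varphi = u-(u-\psi)f_{\mathcal C}$ inside $B_1$. What that extra step buys is a solution with $(-\Delta)^su\not\equiv 0$, i.e., one that is unambiguously ``non-trivial''; if that word is read as excluding constants, your argument needs this fix, but it is cosmetic: apply your perturbation $\varphi:=u-f$ to any such non-constant $u$ in a ball where it is $s$-harmonic and above its original obstacle, exactly as the paper does. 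With that caveat your proof is complete.
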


This shows that, in general, there is no hope to get nice structure results for the full free boundary for $C^\infty$ obstacles. However, thanks to Theorem~\ref{thm.MAIN11} above we know that such behaviour is extremely rare. As before, we are answering question Q5 in the generality of the fractional obstacle problem; the Signorini problem corresponds to the case $s = \frac12$.

\subsection{Organization of the paper}
The paper is organised as follows:

In Section~\ref{sec.Sch} we study the behaviour of degenerate points under perturbation. In particular, we show how the free boundary moves around them when perturbing monotonically the solution to the obstacle problem.
We treat separately general degenerate points, and those of order 2. In Section~\ref{sec.dimGam2} we study the dimension of the set $\Gamma_2$ by means of an appropriate application of Whitney's extension theorem. In Section~\ref{sec.mainresults} we prove the main results of this work, Theorems~\ref{thm.MAIN0}, \ref{thm.MAIN01}, \ref{thm.MAIN1}, and~\ref{thm.MAIN11}.
In Section~\ref{sec.examples} we construct the examples of degenerate points introduced in Subsection~\ref{ssec.ex}, proving Propositions~\ref{prop.singpoints}, \ref{prop.secondexample}, and \ref{prop.inf_points}.
Finally, in Section~\ref{sec.parab} we deal with the parabolic Signorini problem and prove Theorem~\ref{thm.parab_MAIN}.

\section{Behaviour of non-regular points under perturbations}
\label{sec.Sch}

Let $B_1 \subset\R^{n+1}$, $B_1' = \{x'\in \R^n : |x'|< 1\} \subset \R^n$ and let
\begin{equation}
\label{eq.obst}
\varphi :B_1'\to \R,\quad\varphi\in C^{\tau, \alpha}(\overline{B_1'}),\quad\tau \in \N_{\ge 2},~\alpha\in(0,1]
\end{equation}
be our obstacle on the thin space. Let us consider the fractional operator
\[
L_a u := {\rm div}(|x_{n+1}|^a \nabla u)= {\rm div}(|x_{n+1}|^{1-2s} \nabla u),\qquad a := 1-2s,
\]
with $a\in(-1,1)$, and $(0, 1)\ni s = \frac{1-a}{2}$. We will interchangeably use both $a$ and $s$ depending on the situation. (In general, we will use $a$ for the weight exponent, and $s$ for all the other situations.)

Let us suppose that we have a family of {\it increasing} even solutions $u_{\lambda}$ for $0\le \lambda \le 1$ to the fractional obstacle problem
\begin{equation}
\label{eq.thinobst_lam_vp}
  \left\{ \begin{array}{rcll}
 L_a u_\lambda&=&0 & \textrm{ in } B_1\setminus \left(\{x_{{n+1}} = 0\}\cap \{u_\lambda = \varphi\}\right)\\
  L_a u_\lambda&\le&0 & \textrm{ in } {B_1}\\
  u_\lambda &\ge& \varphi & \textrm{ on } \{x_{{n+1}} = 0\},
  \end{array}\right.
\end{equation}
for a given obstacle $\varphi$ satisfying \eqref{eq.obst}. In particular, $\{u_\lambda\}_{0\le \lambda \le 1}$ satisfy
\begin{equation}
\label{eq.uassump}
\begin{array}{rcll}
u_{\lambda}(x', x_{n+1}) & = & u_{\lambda}(x', -x_{n+1})&\quad\textrm{in}\quad B_1, \quad\textrm{for}\quad \lambda\ge 0\\
u_{\lambda'}& \ge& u_{\lambda} &\quad\textrm{in}\quad B_1, \quad\textrm{for}\quad \lambda' \ge \lambda\\
u_{\lambda+\eps}& \ge& u_{\lambda}+\eps &\quad\textrm{in}\quad B_1\cap\{|x_{n+1}|\ge \frac12\}, \quad\textrm{for}\quad \lambda, \eps \ge 0\\
\|u_\lambda\|_{C^{2s}(B_1)}& \le & M, &\quad\textrm{in}\quad B_1 \quad\textrm{for}\quad \lambda\ge 0,
\end{array}
\end{equation}
for some constant $M$ independent of $\lambda$, that will depend on the obstacle (see \eqref{eq.Mf}-\eqref{eq.Mf2} below). Notice that solutions are $C^{1,s}$ in $B_{1/2}'$ (or in $\overline{B_{1/2}^+}$), but only $C^{2s}$ in $B_1$ ($C^{0,1}$ when $s=\frac12$).

We denote $\Lambda(u_\lambda) := \{x' : u_\lambda(x', 0) = \varphi(x')\}\times\{0\}\subset \R^n$ the contact set, and its boundary in the relative topology of $\R^n$, $\de \Lambda(u_\lambda) = \de \{x' : u_\lambda(x', 0) = \varphi(x')\}\times\{0\}$ is the free boundary. Note that, from the monotonicity assumption,
\begin{equation}
\label{eq.subset}
\Lambda(u_\lambda) \subset \Lambda(u_{\lambda'})\quad \textrm{for}\quad\lambda \ge \lambda'.
\end{equation}


\begin{lem}
\label{lem.ulambda}
Let $u_\lambda$ denote the family of solutions to \eqref{eq.thinobst_lam_vp}-\eqref{eq.uassump}. Then, for any $h > 0$ small, $x_\circ\in {B_{1-h}}$, and $\eps > 0$,
\[
\frac{u_{\lambda+\eps}(x_\circ) - u_\lambda(x_\circ)}{\eps} \ge c~{\rm dist}^{2s}(x_\circ, \Lambda(u_\lambda)),
\]
for some constant $c >0$ depending only on $n$, $s$, and $h$. In particular,
\[
\de_\lambda^+ u_\lambda(x_\circ) := \liminf_{\eps\downarrow 0} \frac{u_{\lambda+\eps}(x_\circ) - u_\lambda(x_\circ)}{\eps} \ge c~{\rm dist}^{2s}(x_\circ, \Lambda(u_\lambda)),
\]
for some constant $c>0$ depending only on $n$, $s$, and $h$.
\end{lem}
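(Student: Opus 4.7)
Set $w := u_{\lambda+\eps} - u_\lambda$. From \eqref{eq.thinobst_lam_vp}--\eqref{eq.uassump} one reads off that $w \ge 0$ in $B_1$, $w \ge \eps$ on $B_1 \cap \{|x_{n+1}|\ge\tfrac{1}{2}\}$, $w$ is even in $x_{n+1}$, and since $u_{\lambda+\eps}\ge u_\lambda\ge\varphi$ forces $\Lambda(u_{\lambda+\eps})\subset\Lambda(u_\lambda)$, one has $L_a w = 0$ classically in $B_1\setminus\Lambda(u_\lambda)$. The bound on $\de_\lambda^+ u_\lambda$ will follow from the $\eps$-version by taking $\liminf$ as $\eps\downarrow 0$, so I focus on proving $w(x_\circ) \ge c\,\eps\,d^{2s}$ with $d := \operatorname{dist}(x_\circ,\Lambda(u_\lambda))$. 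By evenness, WLOG $x_{\circ,n+1}\ge 0$.

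\textbf{Far-from-$\Lambda$ regime.} Fix a threshold $d_\circ=d_\circ(h)>0$ to be chosen. If $d \ge 2d_\circ/5$, the set $B_{1-h/2}\setminus\Lambda(u_\lambda)$ is open and path-connected (the thin contact set can always be circumvented through the upper half-space), so I chain the Harnack inequality for $L_a$ along a finite sequence of balls of radius $\sim \min(d_\circ,h)$ inside this open set, joining a point of $\{x_{n+1}=3/4\}$ (where $w\ge\eps$) to $x_\circ$. The chain length is bounded by a constant $C(n,s,h)$, yielding $w(x_\circ) \ge c(n,s,h)\,\eps \ge c\,\eps\,d^{2s}$ since $d\le 1$.

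\textbf{Barrier in the close regime.} Suppose $d<2d_\circ/5$ and pick $p\in\Lambda(u_\lambda)$ with $|x_\circ-p|=d$; set $R:=d_\circ$, so $B_R(p)\subset B_{1-h/2}$. Applying the previous step at points of $\de B_R(p)\cap\{x_{n+1}\ge R/2\}$ (which lie at distance $\ge R/2$ from $\Lambda$), one gets $w \ge c_1\eps$ on that polar cap. Let $\Phi$ be the $L_a$-harmonic function on the upper half-ball $B_R^+(p):=B_R(p)\cap\{x_{n+1}>0\}$ with $\Phi = c_1\eps$ on $\de B_R(p)\cap\{x_{n+1}\ge R/2\}$ and $\Phi = 0$ on the rest of $\de B_R^+(p)$, in particular on the thin base. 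Since $w \ge \Phi$ pointwise on $\de B_R^+(p)$, the maximum principle for $L_a$ gives $w \ge \Phi$ in $B_R^+(p)$. By scaling, $\Phi(p+R\xi)=c_1\eps\,\phi(\xi)$, and the sharp Hopf-type growth estimate for $L_a$-harmonic functions vanishing on the thin side of $B_1^+$ (see e.g. \cite{CS07,CSS08}) yields $\phi(\xi) \ge c(n,s)\,|\xi_{n+1}|^{2s}$ in $B_{1/2}^+$. Therefore, $w(x)\ge c\,\eps\,|x_{n+1}|^{2s}$ for every $x\in B_{R/2}^+(p)$.

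\textbf{Upgrade to $d^{2s}$; main obstacle.} Set $\tilde x:=x_\circ+\tfrac{d}{4}e_{n+1}$; then $|\tilde x-p|\le 5d/4 < R/2$, so the barrier bound gives $w(\tilde x)\ge c\,\eps\,|\tilde x_{n+1}|^{2s}\ge c\,\eps\,(d/4)^{2s}$. The ball $B_{d/2}(x_\circ)$ is disjoint from $\Lambda(u_\lambda)$ by definition of $d$, so $w$ is classically $L_a$-harmonic there; applying the interior Harnack inequality on $B_{3d/8}(x_\circ)$ (which contains both $x_\circ$ and $\tilde x$ as interior points) produces $w(x_\circ) \ge c'\,w(\tilde x) \ge c''\,\eps\,d^{2s}$, which is the claim. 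The main non-trivial ingredient in this argument is the Hopf-type estimate $\phi(\xi)\ge c|\xi_{n+1}|^{2s}$ for $L_a$-harmonic functions vanishing on the thin hyperplane: this is precisely where the sharp exponent $2s$ enters, and it cannot be recovered by elementary interior-Harnack chaining down to the thin set (which would only give a suboptimal, weight-dependent Hölder exponent).
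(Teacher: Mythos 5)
Your proof is correct and follows essentially the same strategy as the paper's: compare the increment $u_{\lambda+\eps}-u_\lambda$ with an $L_a$-harmonic barrier vanishing on the thin space that is bounded below by $c\,|x_{n+1}|^{2s}$ (via boundary Harnack/Hopf for the $A_2$-weighted operator), and then transfer this bound to $x_\circ$ by the interior weighted Harnack inequality in a ball of radius comparable to ${\rm dist}(x_\circ,\Lambda(u_\lambda))$. The only (immaterial) differences are that the paper uses a single global barrier $\psi$ on all of $B_1$ instead of your local half-ball barrier near the closest contact point preceded by a Harnack chain, and it splits into the cases $|x_{\circ,n+1}|\gtrless r/4$ rather than always lifting to the auxiliary point $\tilde x$.
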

\begin{proof}
Fix some $\lambda > 0$ and $\eps > 0$, and define
\[
\delta_{\lambda, \eps} u_\lambda(x) = \frac{u_{\lambda +\eps}(x) - u_{\lambda}(x)}{\eps}.
\]
We will show that the result holds for $\delta_{\lambda, \eps} u_\lambda$ for some constant $c$ independent of $\eps> 0$, and in particular, it also holds after taking the $\liminf$.


Notice that $\delta_{\lambda, \eps}u_\lambda (x)\ge 0$ from the monotonicity of $u_\lambda$ in $\lambda$. Notice, also, that $\delta_{\lambda,\eps} u_\lambda \ge 1 $ in $B_1\cap \{x_{n+1} \ge \frac12\}$, form the third condition in \eqref{eq.uassump}. On the other hand,
\[
L_a\delta_{\lambda, \eps} u_\lambda = 0\quad\textrm{in}\quad {B_1}\setminus \Lambda(u_\lambda),
\]
thanks to \eqref{eq.subset}.
%
Now, let
\[
r:= \frac{h}{4} {\rm dist}(x_\circ, \Lambda(u_\lambda))
\]
and we define the barrier function $\psi: {B_1}\to \R$ as the solution to
\[
\left\{
\begin{array}{rcll}
L_a \psi &=& 0& \quad\textrm{in}\quad{B_1}\setminus\{x_{n+1} = 0\}\\
\psi &=& 0& \quad\textrm{on}\quad \{x_{n+1} = 0\}\\
\psi & = & 1&  \quad\textrm{on}\quad\de{B_1}\cap \{|x_{n+1}| \ge \frac12\}\\
\psi & = & 0&  \quad\textrm{on}\quad\de{B_1}\cap \{|x_{n+1}| < \frac12\}.
\end{array}
\right.
\]

Then, by maximum principle,
\[
\delta_{\lambda, \eps} u_\lambda \ge \psi\quad\textrm{in}\quad{B_1}.
\]
Notice that, by the boundary Harnack inequality for Muckenhoupt weights $A_2$ (see  \cite{FJK83}), $\psi$ is comparable to $|x_{n+1}|^{2s}$ (since both vanish continuously at $x_{n+1} = 0$, and both are $a$-harmonic), and in particular, there exists some $c'>0$ small depending only on $n$, $s$, and $h$, such that $\psi\ge c'|x_{n+1}|^{2s}$ in $B_r(x_\circ)$. We have that
\[
L_a \delta_{\lambda, \eps} u_\lambda  = 0,\quad\delta_{\lambda, \eps} u_\lambda \ge \psi\ge c'|x_{n+1}|^{2s}\quad\textrm{in}\quad B_r(x_\circ).
\]
Now, if $x_\circ = (x_\circ', x_{\circ, n+1})$ is such that $|x_{\circ, n+1}|\ge \frac{r}{4}$, it is clear that $\delta_{\lambda, \eps} u_\lambda (x_\circ) \ge c r^{2s}$. On the other hand, if $|x_{\circ, n+1}|\le \frac{r}{4}$, then $L_a \delta_{\lambda, \eps} u_\lambda  = 0$ in $B_{r/2}((x_\circ', 0))$, so that applying Harnack's inequality in $B_{r/4}((x_\circ', 0))$ to $\delta_{\lambda, \eps} u_\lambda $,
\[
\delta_{\lambda, \eps} u_\lambda (x_\circ) \ge \inf_{B_{r/4}((x_\circ', 0))} \delta_{\lambda, \eps} u_\lambda  \ge \frac{1}{C}\sup_{B_{r/4}((x_\circ', 0))} \delta_{\lambda, \eps} u_\lambda  \ge  \frac{c'r^{2s}}{4^{2s}C} = cr^{2s},
\]
for some $c$ depending only on $n$, $s$, and $h$. Thus,
\[
\delta_{\lambda, \eps} u_\lambda (x_\circ) \ge  c r^{2s} = c~{\rm dist}^{2s}(x_\circ, \Lambda(u_\lambda)),
\]
as we wanted to see.
\end{proof}

Let $0\in \de\Lambda(u_\lambda)$ be a  free boundary point for $u_\lambda$. Let us denote $Q_\tau(x')$ the Taylor expansion of $\varphi(x')$ around $0$ up to order $\tau$, and we denote $Q_\tau^a(x)$ its unique even $a$-harmonic extension (see \cite[Lemma 5.2]{GR19}) to $\R^{n+1}$ ($L_a Q_\tau^a(x)= 0$, and $Q_\tau^a(x', 0) = Q_\tau(x')$). Let us define
\[
\bar u_\lambda(x', x_{n+1}) = u_\lambda(x', x_{n+1}) - Q_\tau^a(x', x_{n+1}) + Q_\tau(x') - \varphi(x').
\]
Then $\bar u_\lambda(x', x_{n+1})$ solves the zero obstacle problem with a right-hand side
\begin{equation}
\label{eq.thinobst_lam_vp_f}
  \left\{ \begin{array}{rcll}
  L_a\bar u_\lambda&=&|x_{n+1}|^a f & \textrm{ in } {B_1}\setminus \left(\{x_{{n+1}} = 0\}\cap \{\bar u_\lambda = 0\}\right)\\
  L_a \bar u_\lambda&\le&|x_{n+1}|^a f & \textrm{ in } {B_1}\\
  \bar u_\lambda &\ge& 0 & \textrm{ on } \{x_{{n+1}} = 0\},
  \end{array}\right.
\end{equation}
where
\begin{equation}
\label{eq.Mf}
f = f(x') = \Delta_{x'} (Q_\tau(x') - \varphi(x')).
\end{equation}
In particular, notice that since $Q_\tau(x')$ is the Taylor approximation of $\varphi$ up to order $\tau$, we have that
\begin{equation}
\label{eq.Mf2}
|f(x')|\le M |x'|^{\tau +\alpha -2}
\end{equation}
for some $M > 0$ depending only on $\varphi$. We take $M$ larger if necessary, so that it coincides with the one of \eqref{eq.uassump}.

We consider the generalized frequency formula, for $\theta\in (0, \alpha)$, and for some $C_\theta$ (that is independent of the point around which is taken)
\begin{equation}
\label{eq.genPhi}
\Phi_{\tau, \alpha, \theta}(r, \bar u_\lambda) := (r+C_\theta r^{1+\theta})\frac{d}{dr} \log \max\bigg\{H(r), r^{n+a+2(\tau+\alpha-\theta)}\bigg\},
\end{equation}
where
\[
H(r) := \int_{\de B_r} \bar u_\lambda^2|x_{n+1}|^a.
\]
Then, by \cite[Proposition 6.1]{GR19} (see also \cite{CSS08, GP09}) we know that $\Phi_{\tau, \alpha, \theta}(r, \bar u_\lambda)$ is nondecreasing for $0 < r< r_\circ$ for some $r_\circ$. In particular, $\Phi_{\tau, \alpha, \theta}(0^+, \bar u_\lambda)$ is well defined, and by \cite[Lemma 2.3.2]{GP09},
\[
n+3\le \Phi_{\tau, \alpha, \theta}(0^+, \bar u_\lambda) \le n+a+2(\tau+\alpha-\theta).
\]

We say that $0\in \de\Lambda(u_\lambda)$ is a point of order $\kappa$ if $\Phi_{\tau, \alpha, \theta}(0^+, \bar u_\lambda) = n+1-2s+2\kappa$. In particular, by the previous inequalities
\[
1+s \le \kappa \le \tau+\alpha-\theta
\]
Thanks to \cite[Lemma 6.4]{GR19} (see, also, \cite[Lemma 7.1]{BFR18}) we know that for a point of order greater or equal than $\kappa$, for $\kappa < \tau + \alpha - \theta$, then we have
\begin{equation}
\label{eq.growthest}
\sup_{B_r} |\bar u_\lambda| \le C_M r^{\kappa},
\end{equation}
for some constant $C_M$ depending only on $M$, $\tau$, $\alpha$, $\theta$.

In general, for any point $x_\circ\in \de\Lambda(u_\lambda)$, we can define $\bar u_\lambda^{x_\circ}$ analogously to before as follows.
\begin{defi}
\label{defi.baru}
Let $x_\circ \in \de\Lambda(u_\lambda)$. We define,
\begin{equation}
\label{eq.baru}
\bar u_\lambda^{x_\circ}(x) = u_\lambda(x'+x_\circ', x_{n+1}) -  Q_\tau^{a,x_\circ}(x', x_{n+1}) + Q^{x_\circ}_\tau(x') - \varphi(x'+x_\circ'),
\end{equation}
where $Q^{x_\circ}_\tau(x')$ is the Taylor expansion of order $\tau$ of $\varphi(x_\circ'+x')$, and $Q^{a, x_\circ}_\tau(x')$ is its unique even harmonic extension to $\R^{n+1}$.
\end{defi}

(Notice that, on the thin space, $\bar u_\lambda^{x_\circ}(x', 0) = \bar u_\lambda(x'+x_\circ', 0) $, but this is not true outside the thin space.) Then, $\bar u_\lambda^{x_\circ}(x)$ solves a zero obstacle problem with a right-hand side in ${B_{1-|x_\circ|}}$ (in fact, in $x_\circ + B_1$). With this, we can define the free boundary points of $u_\lambda$ of order $\kappa$, with $1+s \le \kappa < \tau+\alpha-\theta$, as
\[
\Gamma_{\kappa}^\lambda := \{ x_\circ \in \de\Lambda(u_\lambda) : \Phi_{\tau, \alpha, \theta} (0^+, \bar u_\lambda^{x_\circ}) = n+1-2s+2\kappa\},
\]
and similarly
\[
\Gamma_{\ge \kappa}^\lambda :=\{ x_\circ \in \de\Lambda(u_\lambda) : \Phi_{\tau, \alpha, \theta} (0^+, \bar u_\lambda^{x_\circ}) \ge n+1-2s+2\kappa\}.
\]
Equivalently, one can define $\Gamma^\lambda_{\ge \kappa}$ as those points where \eqref{eq.growthest} occurs.

Notice that the previous sets are consistently defined, in the sense that if $x_\circ$ is a free boundary point for $u_\lambda$, and $\tau'\in \N$, $\alpha'\in (0, 1)$ are such that $\tau'+\alpha' \le \tau +\alpha$, then
\[
\Phi_{\tau', \alpha', \theta} (0^+, \bar u_\lambda^{x_\circ}) = \min\bigg\{\Phi_{\tau, \alpha, \theta} (0^+, \bar u_\lambda^{x_\circ}), n+1-2s+2(\tau'+\alpha'-\theta)\bigg\},
\]
(cf. \cite[Lemma 2.3.1]{GP09}), i.e., the definition of free boundary points of order $\kappa$ does not depend on which regularity of the obstacle we consider. In particular, for $C^\infty$ obstacles we can define the points of infinite order as
\begin{equation}
\label{eq.Ginf}
\Gamma_{\infty}^\lambda := \bigcap_{\kappa \ge 2} \Gamma_{\ge\kappa}^\lambda.
\end{equation}

We will need the following lemma, similar to \cite[Lemma 4]{ACS08} and analogous to \cite[Lemma 7.2]{CSS08}.
\begin{lem}
\label{lem.epslem}
Let $w\in C^0(B_1)$, and let $\Lambda\subset B_1\cap \{x_{n+1} = 0\}$. There exists some $\eps_\circ > 0$, depending only on $n$ and $a$, such that if $0<\eps < \eps_\circ$ and
\[
\left\{
\begin{array}{rcll}
w& \ge & 1&\quad \textrm{in}\quad B_1 \cap\{|x_{n+1}|\ge \eps\}\\
w& \ge & -\eps&\quad \textrm{in}\quad B_1\\
|L_a w| & \le&  \eps|x_{n+1}|^a&\quad \textrm{in}\quad B_1\setminus\Lambda\\
w & \ge & 0& \quad\textrm{on}\quad \Lambda,
\end{array}
\right.
\]
then $w > 0$ in $B_{1/2}$.
\end{lem}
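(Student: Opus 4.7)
The plan is to prove the lemma by comparison with an explicit $L_a$-harmonic lower barrier, adapting the strategy of \cite[Lemma 4]{ACS08} and \cite[Lemma 7.2]{CSS08} to the degenerate operator $L_a$. First, I would absorb the $\eps|x_{n+1}|^a$ right-hand side by a fixed correction: let $\Phi$ be the $L_a$-torsion potential in $B_1$, i.e.\ the solution of $L_a\Phi = -|x_{n+1}|^a$ in $B_1$ with $\Phi = 0$ on $\partial B_1$. The weighted maximum principle gives $0\le \Phi\le C_0$ for some $C_0 = C_0(n,a)$, and $\tilde w := w + \eps\Phi$ then satisfies $L_a\tilde w\le 0$ in $B_1\setminus\Lambda$, together with $\tilde w\ge -\eps$ in $B_1$, $\tilde w\ge 1$ on $\{|x_{n+1}|\ge \eps\}$, and $\tilde w\ge 0$ on $\Lambda$. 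This reduces the problem to the case where $w$ is honestly $L_a$-superharmonic in $B_1\setminus\Lambda$, up to redefining the threshold $\eps_\circ$.

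Next, I would set $D_\eps := B_{3/4}\setminus\bigl(\Lambda\cup\{|x_{n+1}|\ge \eps\}\bigr)$ and let $h_\eps$ be the bounded $L_a$-harmonic function in $D_\eps$ with boundary data $h_\eps=0$ on $\Lambda$, $h_\eps=1$ on $\{|x_{n+1}|=\eps\}\cap B_{3/4}$, and $h_\eps=-\eps$ on $\partial B_{3/4}\cap D_\eps$. Checking $\tilde w\ge h_\eps$ piece by piece on $\partial D_\eps$ and applying the weighted maximum principle to $\tilde w - h_\eps$ in $D_\eps$ yields $\tilde w\ge h_\eps$ in $D_\eps$. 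Since we already have $\tilde w\ge 1$ on $B_{1/2}\cap\{|x_{n+1}|\ge \eps\}$, the proof is reduced to producing a positive lower bound for $h_\eps$ on $B_{1/2}\cap D_\eps$ that is uniform in $\eps$.

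The crucial step is thus to show $h_\eps(x_\circ)\ge c_1>0$ uniformly in $\eps$, for points $x_\circ=(x_\circ',0)$ satisfying ${\rm dist}(x_\circ',\Lambda)\ge \delta_0>0$. I would handle this by rescaling: define $\tilde h_\eps(y) := h_\eps(x_\circ + \eps y)$, which (by the homogeneity of $L_a$ under this rescaling) is $L_a$-harmonic in the fixed slab $\{|y_{n+1}|<1\}$ intersected with a large ball of radius $R \sim \delta_0/\eps \to \infty$, is non-negative, and equals $1$ on $\{|y_{n+1}|=1\}$. A short Harnack chain from $(0,1)$ down to $(0,0)$, using classical Harnack in the sub-slab $\{1/8\le|y_{n+1}|\le 7/8\}$ where the weight $|y_{n+1}|^a$ is uniformly elliptic, together with one application of the weighted Harnack inequality of Fabes--Kenig--Serapioni in a ball crossing the thin space, yields $\tilde h_\eps(0)\ge c_1(n,a)$. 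Unwinding the rescaling and absorbing the $\eps$-correction of the first step gives $w(x_\circ)\ge c_1/2>0$ for every $\eps<\eps_\circ = \min\{1,c_1/(2C_0)\}$.

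The main obstacle is precisely this uniform Harnack chain in the degenerating domain $D_\eps$: the rescaling is what converts a thin shrinking strip into a fixed slab, and replaces the diverging geometry by a single application of weighted Harnack across the thin space. The latter is legitimate because $|x_{n+1}|^a$ with $a\in(-1,1)$ belongs to the Muckenhoupt class $A_2$, which is exactly the setting where the Fabes--Kenig--Serapioni theory supplies the interior Harnack inequality for $L_a$.
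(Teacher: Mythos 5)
There is a genuine gap: your argument only yields positivity of $w$ at points whose distance to $\Lambda$ is large compared with $\eps$, whereas the lemma asserts $w>0$ at \emph{every} point of $B_{1/2}$ --- in particular at points arbitrarily close to $\Lambda$, which is exactly where the lemma has content (in its application $\Lambda$ is the contact set, and the conclusion is used to show that the contact set clears out of the half-ball). Concretely, your comparison function $h_\eps$ has boundary value $0$ on $\Lambda$, so $h_\eps\to 0$ near $\Lambda$ and the inequality $\tilde w\ge h_\eps$ degenerates there; and your rescaled Harnack-chain argument lives in $B_R\cap\{|y_{n+1}|<1\}$ with $R\sim \delta_0/\eps$, so it requires $\eps\ll\delta_0$ to produce a bound $\tilde h_\eps(0)\ge c_1$ that does not collapse. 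For a point with ${\rm dist}(x_\circ',\Lambda)\lesssim \eps$ the rescaled domain contains no fixed slab away from (the image of) $\Lambda$, and no uniform lower bound can be extracted. Relatedly, your threshold $\eps_\circ=\min\{1,c_1/(2C_0)\}$ inherits a dependence on $\delta_0$ through $c_1$, while the lemma requires $\eps_\circ=\eps_\circ(n,a)$ only. (Smaller, fixable issues: $h_\eps$ is not nonnegative, since it equals $-\eps$ on part of $\de B_{3/4}$; and a Harnack chain cannot be started at the boundary point $(0,1)$ of the slab.)

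The paper sidesteps all of this with a \emph{local} barrier and a contradiction argument: assuming $w(z)=0$ at some $z\in B_{1/2}\setminus\{x_{n+1}=0\}$, one sets $v=w+\frac1n P-\frac{\eps}{1+a}x_{n+1}^2$ with $P(x)=|x'-z'|^2-\frac{n}{1+a}x_{n+1}^2$ (which is $L_a$-harmonic), so that $v$ is strictly $L_a$-superharmonic in a fixed cylinder $Q$ around $z$ minus $\Lambda$, satisfies $v\ge 0$ on $\de Q$ and on $\Lambda$ --- the latter automatically, because $P\ge 0$ on the thin space, so the geometry of $\Lambda$ never enters --- yet $v(z)<0$, contradicting the minimum principle. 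If you wish to salvage a comparison-function approach, the barrier must be bounded below near $\Lambda$ by a quantity independent of $\Lambda$; that requirement leads you essentially back to the quadratic polynomial $P$ of the paper rather than to the $\Lambda$-dependent function $h_\eps$.
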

\begin{proof}
Suppose that it is not true. In particular, suppose that there exists some $z= (z', z_{n+1}) \in B_{1/2}\setminus \{x_{n+1} = 0\}$ such that $w(z) = 0$.
Let us define the cylinder
\[
Q := \left\{x = (x', x_{n+1}) \in B_1 : |x'-z'|<\frac12, ~~|x_{n+1}-z_{n+1}|<\frac{\sqrt{1+a}}{4} \right\},
\]
and let
\[
P(x) = P(x', x_{n+1}) := |x'-z'|^2- \frac{n}{1+a}x_{n+1}^2
\]
so that $L_a P = 0$. Let
\[
v(x) := w(x) +\frac1n P(x) - \frac{\eps}{1+a} x_{n+1}^2.
\]
Notice that $v(z) =-\frac{n}{n(1+a)} z_{n+1}^2 - \frac{\eps}{1+a} z_{n+1}^2 < 0$. We also have that
\[
L_a v = L_a w - 2 \eps |x_{n+1}|^a \le -\eps |x_{n+1}|^a< 0\quad\textrm{in}\quad B_1\setminus \Lambda,
\]
and
\[
v\ge 0\quad\textrm{on}\quad \Lambda.
\]
That is, $v$ is super- $a$-harmonic and is negative at $z\in Q$, then it must be negative somewhere on $\de Q$. Let us check that this is not the case, to reach a contradiction.

First, notice that, assuming $\eps_\circ < \frac{\sqrt{1+a}}{4}$, on $\de Q\cap \{|x_{n+1}|\ge \eps\}$ we have
\[
v\ge 1-\frac{n}{16(n+1)}-\frac{\eps}{16}\ge 0.
\]

On the other hand, on $\left\{|x'-z'| = \frac12\right\}\cap \{|x_{n+1}|\le \eps\}$ we have
\[
v \ge -\eps +\frac{1}{{n+1}}\left(\frac14 - \frac{n}{1+a}\eps^2\right)-\frac{\eps^3}{1+a} > 0,
\]
if $\eps$ is small enough depending only on $n$ and $a$. Thus, $v \ge 0$ on $\de Q$ and on $\Lambda$, and is super- $a$-harmonic in $Q\setminus \Lambda$, so we must have $v  \ge 0$ in $Q$, contradicting $v(z) < 0$.
\end{proof}

Let us now show the following proposition.

\begin{prop}
\label{prop.kap}
Let $u_\lambda$ satisfy \eqref{eq.thinobst_lam_vp}-\eqref{eq.uassump}, and let $\varphi$ satisfy \eqref{eq.obst}. Let $h > 0$ small, and let $x_\circ \in B_{1-h}\cap\Gamma_{\ge\kappa}^\lambda$ with $\kappa \le \tau + \alpha -a$ and $\kappa < \tau + \alpha$. Then,
\[
u_{\lambda + C_*r^{\kappa-2s}} > \varphi\quad\textrm{in}\quad B_r'(x_\circ'),\quad\textrm{for all}\quad r< \frac{h}{4},
\]
for some $C_*$ depending only on $n$, $s$, $M$, $\kappa$, $\tau$, $\alpha$, and $h$.	

In particular, if $x_\circ \in B_{1-h}\cap \Gamma^\lambda$, then
\begin{equation}
\label{eq.part32}
u_{\lambda + C_*r^{1-s}} > \varphi\quad\textrm{in}\quad B_r'(x_\circ'),\quad\textrm{for all}\quad r< \frac{h}{4},
\end{equation}
for some $C_*$ depending only on $n$, $s$, $M$, $\kappa$, $\tau$, $\alpha$,  and $h$.
\end{prop}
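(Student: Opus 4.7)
The plan is to rescale the problem around $x_\circ$ and apply the $\eps$-lemma (Lemma~\ref{lem.epslem}). Set $\eta := C_* r^{\kappa - 2s}$ and define
\[
\tilde u(x) := \frac{\bar u_{\lambda+\eta}^{x_\circ}(2rx)}{K\,(2r)^\kappa}, \qquad x \in B_1,
\]
for large constants $K$ and $C_*$ that will be chosen below. Since $r < h/4$ and $x_\circ \in B_{1-h}$, $\tilde u$ is well defined on $B_1$. I would verify the four hypotheses of Lemma~\ref{lem.epslem} for $\tilde u$, with $\Lambda$ taken to be the rescaled image of $\Lambda(u_{\lambda+\eta})$ on the thin space and with $\eps$ equal to half the threshold $\eps_\circ$ provided by that lemma. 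The conclusion $\tilde u > 0$ in $B_{1/2}$ then restricts, on the thin space, to $u_{\lambda+\eta}(x_\circ'+y',0) > \varphi(x_\circ'+y')$ for all $y' \in B'_r(0)$, which is exactly the first claim.

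The verification rests on three ingredients. First, the growth estimate \eqref{eq.growthest} gives $|\bar u_\lambda^{x_\circ}(y)| \le C_M |y|^\kappa$. Second, writing $\bar u_{\lambda+\eta}^{x_\circ} - \bar u_\lambda^{x_\circ} = (u_{\lambda+\eta} - u_\lambda)(\,\cdot\, + x_\circ)$ (the harmonic extensions and Taylor polynomials cancel) and applying Lemma~\ref{lem.ulambda} together with the trivial bound $\mathrm{dist}(y+x_\circ,\Lambda(u_\lambda)) \ge |y_{n+1}|$ yields the pointwise estimate
\[
\bar u_{\lambda+\eta}^{x_\circ}(y) \;\ge\; c\,\eta\,|y_{n+1}|^{2s} - C_M |y|^\kappa.
\]
Third, since $Q_\tau^{a,x_\circ}$ is $L_a$-harmonic and $Q_\tau^{x_\circ}$ is the order-$\tau$ Taylor polynomial of $\varphi(\cdot + x_\circ')$, on $B_1 \setminus \Lambda(u_{\lambda+\eta})$ one has $L_a \bar u_{\lambda+\eta}^{x_\circ}(y) = |y_{n+1}|^a f_{x_\circ}(y')$ with $|f_{x_\circ}| \le M|y'|^{\tau+\alpha-2}$.

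With these three inputs in hand, hypothesis (iv) of Lemma~\ref{lem.epslem} is immediate from $u_{\lambda+\eta}\ge \varphi$ on the thin space; hypothesis (iii) reduces, after the scaling identity $L_a \tilde u(x) = (2r)^{2-\kappa}K^{-1} L_a \bar u_{\lambda+\eta}^{x_\circ}(2rx)$, to $(M/K)(2r)^{\tau+\alpha-\kappa}\le\eps$, which holds for $K$ large because $\tau+\alpha>\kappa$ and $2r\le h/2$; hypothesis (ii) follows from $\tilde u \ge -C_M/K$; and the key hypothesis (i), on $\{|x_{n+1}|\ge \eps\}$, becomes
\[
\tilde u(x) \;\ge\; \frac{c\,C_*\,2^{2s}\,\eps^{2s} - 2^\kappa C_M}{2^\kappa K} \;\ge\; 1,
\]
which is valid provided $C_*$ is taken large enough. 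All the constants depend only on $n,s,M,\kappa,\tau,\alpha,h$, as the statement requires.

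The delicate point is the order in which the constants are fixed: first $\eps$ is determined by Lemma~\ref{lem.epslem}, then $K$ is chosen large for (ii) and (iii), and only then is $C_*$ chosen large for (i). The exponent $\kappa - 2s$ in the definition of $\eta$ is sharp: it is exactly the one that makes the gain $c\,\eta\, r^{2s}$ from Lemma~\ref{lem.ulambda} balance the possible loss $C_M r^\kappa$ from the growth of $\bar u_\lambda^{x_\circ}$, so any smaller exponent would fail to close the argument. Finally, the ``in particular'' statement \eqref{eq.part32} is an immediate consequence of the first part applied with $\kappa = 1+s$, since by the lower bound on the frequency recalled after \eqref{eq.genPhi} every free boundary point belongs to $\Gamma^\lambda_{\ge 1+s}$.
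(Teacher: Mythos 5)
Your proposal is correct and follows essentially the same route as the paper: Lemma~\ref{lem.ulambda} for the gain $c\,\eta\,{\rm dist}^{2s}$, the growth estimate \eqref{eq.growthest} for the loss $C_M r^{\kappa}$, the bound \eqref{eq.Mf2} on the right-hand side, a rescaling to $B_1$, and Lemma~\ref{lem.epslem}, with the constants fixed in the same order. The only (harmless) cosmetic differences are that you rescale by $2r$ so as to land exactly on $B_r'(x_\circ')$, and you use ${\rm dist}(\cdot,\Lambda)\ge |y_{n+1}|$ directly where the paper introduces the auxiliary parameter $\sigma$ on the slab $\{|x_{n+1}|\ge r\sigma\}$.
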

\begin{proof}
Let us assume that $r< \frac{h}{4}$, and let us establish some properties of $\bar u^{x_\circ}_{\lambda+C_*r^{\kappa-2s}}$ in $B_r(0)$ (see Definition~\ref{defi.baru}), for $C_*$ yet to be chosen.

From Lemma~\ref{lem.ulambda} we know that, for any $z\in {B_{h/2}}$,
\begin{align*}
\frac{\bar u^{x_\circ}_{\lambda+\eps}(z) - \bar u^{x_\circ}_\lambda(z)}{\eps} & = \frac{u_{\lambda+\eps}(x_\circ+z) - u_\lambda(x_\circ+z)}{\eps} \\
& \ge c~{\rm dist}^{2s}(x_\circ+z, \Lambda(u_\lambda))\\
& = c~{\rm dist}^{2s}(z, \Lambda(\bar u^{x_\circ}_\lambda)).
\end{align*}
From the previous inequality applied at $x\in B_r(0)\cap \{|x_{n+1}|\ge r \sigma\}$, for some $\sigma > 0$ to be chosen, for $r < \frac{h}{4}$, and with $\eps = C_*r^{\kappa-2s}$ for some $C_*$ to be chosen,
\[
\bar u^{x_\circ}_{\lambda+C_*r^{\kappa-2s}}(x) \ge c\,C_*r^{\kappa-2s} (r\sigma)^{2s} + \bar u^{x_\circ}_\lambda(x)\quad\textrm{for}\quad x \in B_r(0)\cap \{|x_{n+1}|\ge r\sigma\}.
\]
On the other hand, notice that $0$ is a free boundary point of $\bar u^{x_\circ}_\lambda$ of order greater or equal than $\kappa$. In particular, from the growth estimate \eqref{eq.growthest}, we know that
\[
\bar u^{x_\circ}_\lambda \ge -Cr^{\kappa} \quad\textrm{in}\quad B_r(0),\quad \textrm{for}\quad r< \frac{h}{4},
\]
for some $C$ depending only on $n$,  $M$, $s$, $\tau$, $\alpha$, $\theta$, and $h$. By choosing, for example, $\theta = \min\{\frac{\alpha}{2}, \frac{\tau+\alpha-\kappa}{2}\}$ in the definition of the generalized frequency function, \eqref{eq.genPhi}, we can get rid of the dependence on $\theta$. That is,
\[
\bar  u^{x_\circ}_{\lambda+C_*r^{\kappa-2s}}(x) \ge c\,C_*r^\kappa\sigma^{2s} -Cr^{\kappa}\quad\textrm{for}\quad x \in B_r(0)\cap \{|x_{n+1}|\ge r\sigma\}.
\]
Moreover, since $\bar u^{x_\circ}_{\lambda+C_*r^{\kappa-2s}} \ge \bar u^{x_\circ}_{\lambda}$,
\[
\bar u^{x_\circ}_{\lambda+C_*r^{\kappa-2s}} \ge -Cr^{\kappa} \quad\textrm{in}\quad B_r(0),\quad \textrm{for}\quad r< \frac{h}{4}.
\]
Notice, also, that
\[
|L_a \bar u^{x_\circ}_{\lambda+C_*r^{\kappa-2s}}|\le M|x_{n+1}|^a r^{\tau +\alpha-2}\quad\textrm{in}\quad B_r(0)\setminus \Lambda(\bar u^{x_\circ}_{\lambda+C_*r^{\kappa-2s}}).
\]
Let us rescale in domain. We denote
\[
w(x) := \bar u^{x_\circ}_{\lambda+C_* r^{\kappa-2s}} (rx).
\]
Then $w$ is a solution to a thin obstacle problem with right-hand side and with zero obstacle in the ball $B_1$, such that
\[
\left\{
\begin{array}{rcll}
w& \ge & (c\,C_*\sigma^{2s} - C)r^\kappa&\quad \textrm{in}\quad B_1(0) \cap\{|x_{n+1}|\ge \sigma\}\\
w& \ge & -C r^\kappa&\quad \textrm{in}\quad B_1(0)\\
|L_a w| & \le&  M|x_{n+1}|^ar^{\tau + \alpha -a}&\quad \textrm{in}\quad B_1\setminus(\{ x_{n+1} = 0\}\cap \{w = 0\}).
\end{array}
\right.
\]
In particular, if we take $\tilde w := \frac{w}{(c\,C_*\sigma^{2s} - C)r^\kappa}$, then
\[
\left\{
\begin{array}{rcll}
\tilde w& \ge & 1&\quad \textrm{in}\quad B_1(0) \cap\{|x_{n+1}|\ge \sigma\}\\
 \tilde w& \ge & -\frac{C}{c\,C_*\sigma^{2s} - C} &\quad \textrm{in}\quad B_1(0)\\
|L_a \tilde w| & \le&  \frac{M}{c\,C_*\sigma^{2s} - C} |x_{n+1}|^a r^{\tau +\alpha-a -\kappa}&\quad \textrm{in}\quad B_1\setminus(\{ x_{n+1} = 0\}\cap \{ \tilde w = 0\}).
\end{array}
\right.
\]
(Notice that $\tau + \alpha - a -\kappa\ge 0$ by assumption.)
We now want to apply Lemma~\ref{lem.epslem}. We need to choose $\sigma < \eps_\circ(n, a)$, and $C_*$ such that
\[
\frac{C}{c\,C_*\sigma^{2s} - C} < \eps_\circ,\qquad \frac{M}{c\,C_*\sigma^{2s} - C}< \eps_\circ.
\]
By choosing $C_*\gg \eps_\circ^{-1-2s}$ we get that such $C_*$ exists independently of $r$, depending only on $n$, $M$, $s$, $\kappa$, $\tau$, $\alpha$, and $h$.

From Lemma~\ref{lem.epslem}, we deduce that $\tilde w > 0$ in $B_{1/2}$, so that $\bar u^{x_\circ}_{\lambda+C_* r^{\kappa-2s}} > 0$ in $B_{r/2}(0)$. Since $r<h/4$,  we get the desired result, noticing that $\bar u^{x_\circ}_{\lambda+C_* r^{\kappa-2s}}  = (u_{\lambda+C_* r^{\kappa-2s}} -\varphi)(\cdot + x_\circ)$ on $B_r'$.

Finally, notice that thanks to the optimal regularity of solutions, if $x_\circ \in \Gamma^\lambda$, then $x_\circ \in \Gamma^\lambda_{\ge 1+s}$, so that applying the previous result we are done.
\end{proof}

The following corollary will be useful below.
\begin{cor}
\label{cor.u1u2}
Let $u^{(1)}$ and $u^{(2)}$ denote two solutions to
\begin{equation}
  \left\{ \begin{array}{rcll}
 L_a u^{(i)}&=&0 & \textrm{ in } B_1\setminus \left(\{x_{{n+1}} = 0\}\cap \{u^{(i)} = \varphi\}\right)\\
  L_a u^{(i)}&\le&0 & \textrm{ in } {B_1}\\
  u^{(i)} &\ge& \varphi & \textrm{ on } \{x_{{n+1}} = 0\},
  \end{array}\right.\quad\textrm{ for }\quad i \in \{1, 2\}.
\end{equation}
Then, for any $\eps_\circ > 0$ and $h > 0$, there exists a $\delta > 0$ such that if
\[
u^{(2)}\ge u^{(1)},\quad\textrm{and}\quad u^{(2)}\ge u^{(1)} + \eps_\circ\quad\textrm{in}\quad \{|x_{n+1}|\ > 1/2\},
\]
then
\[
\inf\bigg\{ |x_1 - x_2| : x_1\in \de\Lambda(u^{(1)})\cap B_{1-h}, x_2\in \de\Lambda(u^{(2)})\cap B_{1-h}\bigg\}\ge \delta.
\]
\end{cor}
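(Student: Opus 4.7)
The plan is to reduce to proving the following: for every $x_\circ \in \de\Lambda(u^{(1)}) \cap B_{1-h}$, there exists some $\delta = \delta(n, s, M, \eps_\circ, h) > 0$ \emph{independent} of $x_\circ$ such that $u^{(2)} > \varphi$ on $B'_\delta(x_\circ')$. This immediately yields the corollary, since then no free boundary point of $u^{(2)}$ can lie within distance $\delta$ of $x_\circ$. Conceptually, the strategy is to re-run the proof of Proposition~\ref{prop.kap} with the roles of $u_\lambda$ and $u_{\lambda+\eps}$ played by $u^{(1)}$ and $u^{(2)}$, and with the increment $\eps$ replaced by the constant $\eps_\circ$; the only input I actually need is the analogue of Lemma~\ref{lem.ulambda} for $w:=u^{(2)}-u^{(1)}$.

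First I would establish the quantitative separation
\[
w(x) \,\ge\, c\,\eps_\circ\, {\rm dist}^{2s}(x, \Lambda(u^{(1)})) \qquad \text{for } x\in B_{1-h/2},
\]
by repeating verbatim the barrier argument in Lemma~\ref{lem.ulambda} applied to $w/\eps_\circ$. The three hypotheses needed check out: $w\ge 0$ in $B_1$ and $w\ge\eps_\circ$ on $\{|x_{n+1}|\ge 1/2\}$ by assumption, and $L_a w = 0$ on $B_1\setminus \Lambda(u^{(1)})$ because off the thin space both $u^{(i)}$ are $L_a$-harmonic, while on the thin space a point $x\notin \Lambda(u^{(1)})$ satisfies $u^{(2)}(x)\ge u^{(1)}(x)>\varphi(x)$, so it also lies outside $\Lambda(u^{(2)})$.

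Next I would fix $x_\circ \in \de\Lambda(u^{(1)}) \cap B_{1-h}$ and consider the corrected solutions $\bar u^{(1), x_\circ}$ and $\bar u^{(2), x_\circ}$ of Definition~\ref{defi.baru}. By the optimal $C^{1,s}$ regularity of solutions, $x_\circ$ has frequency at least $1+s$ for $u^{(1)}$, so \eqref{eq.growthest} applied to $\bar u^{(1), x_\circ}$ gives $|\bar u^{(1), x_\circ}(x)|\le C r^{1+s}$ on $B_r$ for $r<r_0(h)$. Combining with the lower bound on $w$ and the identity $\bar u^{(2), x_\circ} = \bar u^{(1), x_\circ} + w(\cdot + x_\circ)$ (valid since the polynomial corrector only depends on $\varphi$), I obtain
\[
\bar u^{(2), x_\circ}(x) \,\ge\, c\,\eps_\circ (r\sigma)^{2s} - Cr^{1+s} \ \text{on}\ B_r\cap\{|x_{n+1}|\ge r\sigma\}, \qquad \bar u^{(2), x_\circ}(x) \,\ge\, -Cr^{1+s} \ \text{on}\ B_r.
\]

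The main work is then to rescale $v(x):=N^{-1}\bar u^{(2), x_\circ}(rx)$ with $N:=c\,\eps_\circ(r\sigma)^{2s} - Cr^{1+s}$ and to verify the hypotheses of Lemma~\ref{lem.epslem}. Taking $\sigma$ smaller than the threshold of that lemma, and noting that as $r\to 0$ the ratio $r^{1+s}/r^{2s} = r^{1-s}\to 0$ (since $s<1$), the choice $r \sim \eps_\circ^{1/(1-s)}$ with a small enough prefactor forces $v\ge 1$ on $B_1\cap\{|x_{n+1}|\ge\sigma\}$, $v\ge-\eps$ on $B_1$, and $|L_a v|\le\eps|x_{n+1}|^a$ on $B_1\setminus \Lambda$ (using $|L_a\bar u^{(2), x_\circ}|\le M|x_{n+1}|^a|x|^{\tau+\alpha-2}$). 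Lemma~\ref{lem.epslem} then gives $v>0$ in $B_{1/2}$, hence $u^{(2)}>\varphi$ on $B'_{r/2}(x_\circ')$, and we set $\delta:=r/2\sim\eps_\circ^{1/(1-s)}$. The main technical obstacle is simply tracking the correct powers of $r,\sigma,\eps_\circ$ so that all three bounds in Lemma~\ref{lem.epslem} hold simultaneously; there is no new idea beyond Proposition~\ref{prop.kap}.
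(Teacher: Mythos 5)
Your proposal is correct, and it arrives at the result by unpacking the same machinery the paper instead invokes as a black box. The paper's proof is a two-line reduction to Proposition~\ref{prop.kap}: it constructs an auxiliary monotone family $u^{(1)}_\lambda$ interpolating between $u^{(1)}$ and (a function below) $u^{(2)}$, checks via Harnack that $(c\eps_\circ)^{-1}u^{(1)}_\lambda$ satisfies \eqref{eq.uassump}, and then applies Proposition~\ref{prop.kap} with $\kappa=1+s$ at $\lambda=0$. You instead re-run the proofs of Lemma~\ref{lem.ulambda} and Proposition~\ref{prop.kap} directly on the difference $w=u^{(2)}-u^{(1)}$, which plays the role of the incremental quotient $\delta_{\lambda,\eps}u_\lambda$; your verification that $L_a w=0$ off $\Lambda(u^{(1)})$ (using $\Lambda(u^{(2)})\subset\Lambda(u^{(1)})$) is exactly the analogue of \eqref{eq.subset}, and the identity $\bar u^{(2),x_\circ}=\bar u^{(1),x_\circ}+w(\cdot+x_\circ)$ is valid since the corrector in Definition~\ref{defi.baru} depends only on $\varphi$ and $x_\circ$. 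The trade-off is that your route avoids constructing the auxiliary family and verifying \eqref{eq.uassump} for it, at the cost of redoing the rescaling and the application of Lemma~\ref{lem.epslem}; the key balance $Cr^{1+s}\ll c\,\eps_\circ(r\sigma)^{2s}$, i.e. $r^{1-s}\ll\eps_\circ\sigma^{2s}$, does close since $s<1$, yielding $\delta\sim\eps_\circ^{1/(1-s)}$, which is consistent with the exponent $1-s$ in \eqref{eq.part32}. Two minor points to watch: the threshold $\eps_\circ$ of Lemma~\ref{lem.epslem} is a different quantity from the $\eps_\circ$ in the statement of the corollary (rename one of them), and the conclusion $u^{(2)}>\varphi$ on $B'_{r/2}(x_\circ')$ with $r$ uniform in $x_\circ$ is indeed all that is needed, since both free boundaries lie in the thin space.
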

\begin{proof}
The proof follows by Proposition~\ref{prop.kap}. Let us denote $u^{(1)}_\lambda$ the solution to the thin obstacle problem \eqref{eq.thinobst_lam_vp} with boundary data equal to $u^{(1)}$ on $\de B_1\cap \{|x_{n+1}|\le 1/2\}$, and $u^{(1)}_\lambda + \lambda\eps_\circ$ on $\de B_1\cap \{|x_{n+1}|> 1/2\}$. In particular, $u^{(1)}= u^{(1)}_0\le u^{(1)}_1 \le u^{(2)}$.  Moreover, thanks to the Harnack inequality we know that $u^{(1)}_{\lambda+ \eps} \ge u^{(1)}_\lambda + c\eps\eps_\circ$ for $\eps > 0$ in $B_1\cap \{|x_{n+1}|\ge \frac12\}$, for some constant $c$. Thus, if we define
\[
w_\lambda := (c\eps_\circ)^{-1} u^{(1)}_\lambda ,
\]
then $w_\lambda$ fulfil \eqref{eq.uassump}. The result now follows applying Proposition~\ref{prop.kap} to $w_\lambda$ and using that $u^{(1)}= c\eps_\circ w_0 \le c\eps_\circ w_\lambda \le u^{(2)}$ for $\lambda \in [0, 1]$.
\end{proof}

As a direct consequence of Proposition~\ref{prop.kap} (in particular, of \eqref{eq.part32}), we get that if $0\in \de\Lambda(u_\lambda)$, then $0\notin \de\Lambda(u_{\bar\lambda})$ for $\bar \lambda \neq \lambda$ (since $u_{\lambda+C_*\delta^{1-s}} > \varphi$ in $B_\delta$ for $\delta > 0$ small enough).

In particular:
\begin{defi} We define
\[
\Gamma_{\kappa} := \bigcup_{\lambda\in [0, 1]} \Gamma^\lambda_{\kappa},\qquad \Gamma_{\ge \kappa} := \bigcup_{\lambda\in [0, 1]} \Gamma^\lambda_{\ge \kappa},\qquad\textrm{and}\qquad\Gamma:= \bigcup_{\lambda\in [0, 1]} \Gamma^\lambda.
\]
We also define
\begin{equation}
\label{eq.lamcirc}
\lambda (x_\circ) := \big\{\lambda \in [0, 1]: x_\circ\in  \de\Lambda(u_\lambda)\big\},
\end{equation}
which is uniquely defined on $\Gamma$.
\end{defi}

The fact that $\lambda(x_\circ)$ is uniquely defined for $x_\circ \in \Gamma$ follows since $\Gamma_{\kappa}\cap \Gamma_{\bar\kappa} = \varnothing$ if $\kappa\neq \bar\kappa$. In particular, if $x_\circ\in \Gamma_{\kappa}$ then $x_\circ \in \Gamma^{\lambda(x_\circ)} = \de\Lambda(u_{\lambda(x_\circ)})$.

A direct consequence of Proposition~\ref{prop.kap} is that $\Gamma \ni x_\circ \mapsto \lambda(x_\circ)$ is continuous:

\begin{cor}
\label{cor.lamcont}
Let $u_\lambda$ satisfy \eqref{eq.thinobst_lam_vp}-\eqref{eq.uassump}, and let $\varphi$ satisfy \eqref{eq.obst}. The function
\[\Gamma \ni x_\circ \mapsto \lambda(x_\circ)\]
for $\lambda(x_\circ)$ defined by \eqref{eq.lamcirc} is continuous. Moreover, for each $h > 0$,
\[\Gamma \cap B_{1-h}\ni x_\circ \mapsto \bar u^{x_\circ}_{\lambda(x_\circ)}\]
is continuous in the $C^0$-norm.
\end{cor}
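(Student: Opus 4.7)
The plan is to prove the two continuity assertions in order, since the second builds on the first.

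For the continuity of $\lambda(\cdot)$, the idea is to apply Proposition~\ref{prop.kap} symmetrically at $x_\circ$ and at approaching points. Every free boundary point lies in $\Gamma_{\ge 1+s}^\lambda$ by optimal regularity of solutions, so Proposition~\ref{prop.kap} with $\kappa=1+s$ yields $u_{\lambda(x_\circ)+C_*r^{1-s}}>\varphi$ on $B_r'(x_\circ')$ for any small $r<h/4$. Given $\eps>0$, first I would choose $r$ with $C_*r^{1-s}<\eps$, and then take $x\in\Gamma$ with $|x-x_\circ|<r/2$, so that $B_{r/2}'(x')\subset B_r'(x_\circ')$. If $\lambda(x)\ge\lambda(x_\circ)+\eps$, monotonicity of $u_\lambda$ in $\lambda$ gives $u_{\lambda(x)}\ge u_{\lambda(x_\circ)+C_*r^{1-s}}>\varphi$ on $B_{r/2}'(x')$, contradicting $x\in\de\Lambda(u_{\lambda(x)})$. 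Exchanging the roles of $x$ and $x_\circ$ (applying Proposition~\ref{prop.kap} at $x$, which lies in $B_{1-h/2}$ once $x$ is close enough) gives the matching lower bound $\lambda(x)>\lambda(x_\circ)-\eps$. Alternatively, the same conclusion follows from Corollary~\ref{cor.u1u2} applied to $u^{(1)}=u_{\lambda(x_\circ)}$ and $u^{(2)}=u_{\lambda(x_\circ)+\eps}$, which forces the corresponding free boundaries to be at distance at least $\delta(\eps)>0$.

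For the $C^0$-continuity of $x_\circ\mapsto\bar u^{x_\circ}_{\lambda(x_\circ)}$ on $\Gamma\cap B_{1-h}$, the strategy is compactness plus identification of the limit. Taking $x_k\to x_\circ$ and $\lambda_k:=\lambda(x_k)\to\lambda_\circ:=\lambda(x_\circ)$ by the previous step, the uniform bound $\|u_\lambda\|_{C^{2s}(B_1)}\le M$ in \eqref{eq.uassump}, together with the continuous dependence on the base point of the Taylor polynomials $Q^x_\tau$ and $Q^{a,x}_\tau$ (from $\varphi\in C^{\tau,\alpha}$) and of the translates of $\varphi$, makes $\{\bar u^{x_k}_{\lambda_k}\}$ uniformly bounded and equicontinuous on every compact subset. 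By Ascoli--Arzelà, every subsequence admits a further subsequence converging in $C^0_{\rm loc}$ to some limit $U^*$, and it suffices to identify $U^*=\bar u^{x_\circ}_{\lambda_\circ}$. To do so, I would split
\[
u_{\lambda_k}(y+(x_k',0))-u_{\lambda_\circ}(y+(x_\circ',0))=\bigl[u_{\lambda_k}(y+(x_k',0))-u_{\lambda_k}(y+(x_\circ',0))\bigr]+\bigl[u_{\lambda_k}-u_{\lambda_\circ}\bigr](y+(x_\circ',0)).
\]
The first bracket is bounded by $M|x_k-x_\circ|^{2s}\to 0$ from the uniform $C^{2s}$ estimate; the second tends to zero by $C^0_{\rm loc}$-continuity of $\lambda\mapsto u_\lambda$, obtained from monotonicity combined with the $C^0$-continuity of the boundary data $g_\lambda$ built into \eqref{eq.cond_glam} and standard stability for the thin obstacle problem. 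Since the polynomial and obstacle terms in the definition \eqref{eq.baru} converge uniformly, $U^*=\bar u^{x_\circ}_{\lambda_\circ}$, and uniqueness of the subsequential limit upgrades subsequential to full $C^0_{\rm loc}$-convergence.

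The main obstacle is the identification step: pre-compactness is immediate from uniform regularity, but ruling out a jump of $\lambda\mapsto u_\lambda$ at a value $\lambda_\circ\in\lambda(\Gamma)$ requires either the $C^0$-continuity of the parametrization $g_\lambda$ (present in all examples listed after \eqref{eq.cond_glam}, such as $g_\lambda=g_0\pm\lambda$ or $g_\lambda=g_0+\lambda\Psi$) or a direct argument showing that $x_\circ\in\de\Lambda(u_{\lambda_\circ})$ forces the one-sided monotone limits $u_{\lambda_\circ^\pm}$ to touch $u_{\lambda_\circ}$ at $x_\circ$ and then propagating equality via a maximum-principle and boundary-Harnack comparison analogous to the one used in the proof of Lemma~\ref{lem.ulambda}.
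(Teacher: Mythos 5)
Your argument for the continuity of $x_\circ\mapsto\lambda(x_\circ)$ is correct and is essentially the paper's: apply Proposition~\ref{prop.kap} (in the form \eqref{eq.part32}) at the point with the smaller value of $\lambda$ and use monotonicity. The problem is in the second half.

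The hypotheses of the corollary are \eqref{eq.thinobst_lam_vp}--\eqref{eq.uassump} only: an abstract monotone family with a uniform $C^{2s}$ bound and the separation property on $\{|x_{n+1}|\ge\frac12\}$. Nothing there gives continuity of $\lambda\mapsto g_\lambda$ or of $\lambda\mapsto u_\lambda$; condition \eqref{eq.cond_glam} is a one-sided monotonicity statement and is perfectly compatible with a jump of $g_\lambda$ at some $\lambda_\circ$. So the identification step of your Ascoli--Arzel\`a argument — ``the second bracket tends to zero by $C^0_{\rm loc}$-continuity of $\lambda\mapsto u_\lambda$'' — rests on an assumption that is not available, and your own closing paragraph correctly diagnoses this as the crux. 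Your proposed fallback (one-sided monotone limits $u_{\lambda_\circ^\pm}$ touching $u_{\lambda_\circ}$ at $x_\circ$) is not carried out and is not quite right as stated either: since $\Lambda(u_\lambda)\subset\Lambda(u_{\lambda'})$ for $\lambda\ge\lambda'$ by \eqref{eq.subset}, the point $x_\circ\in\de\Lambda(u_{\lambda_\circ})$ need not lie in the contact set of $u_\lambda$ for $\lambda>\lambda_\circ$, so $u_{\lambda_\circ^+}$ has no reason to touch $\varphi$ at $x_\circ$.

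The missing idea is that one never needs continuity in $\lambda$; what one needs is the quantitative converse of Corollary~\ref{cor.u1u2}, applied to \emph{pairs of nearby free boundary points}. Arguing by contradiction, take $x_i,z_i\in\Gamma\cap B_{1-h}$ with $|x_i-z_i|\le\frac1i$, say $\lambda(x_i)\ge\lambda(z_i)$, and $\sup_{B_1}|u_{\lambda(x_i)}-u_{\lambda(z_i)}|\ge\eps_\circ$. Monotonicity gives $u_{\lambda(x_i)}\ge u_{\lambda(z_i)}$, the uniform $C^{2s}$ bound gives a fixed ball $B_\rho(y)$ on which the difference is $\ge\eps_\circ/2$ along a subsequence, and the interior Harnack inequality propagates this to $u_{\lambda(x_i)}\ge u_{\lambda(z_i)}+c\eps_\circ$ on $B_{h/2}(z_i)\cap\{|x_{n+1}|\ge h/4\}$. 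Corollary~\ref{cor.u1u2} (after translation and scaling) then forces $\de\Lambda(u_{\lambda(x_i)})$ and $\de\Lambda(u_{\lambda(z_i)})$ to be at distance $\ge\delta(\eps_\circ,h)>0$, contradicting $x_i\in\de\Lambda(u_{\lambda(x_i)})$, $z_i\in\de\Lambda(u_{\lambda(z_i)})$ and $|x_i-z_i|\to0$. This yields continuity of $x_\circ\mapsto u_{\lambda(x_\circ)}$ in $C^0$ directly, with no reference to the regularity of the family in $\lambda$; the passage to $\bar u^{x_\circ}_{\lambda(x_\circ)}$ then follows as you say from the continuity of $x_\circ\mapsto Q_\tau^{x_\circ}$, $Q_\tau^{a,x_\circ}$ and $\varphi(\cdot+x_\circ')$.
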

\begin{proof}
Let us start with the first statement. If $x_1,x_2 \in \Gamma$ are such that $|x_1-x_2|\le\frac\delta2$ for $\delta >0$ small enough, and $\lambda(x_1)\ge \lambda(x_2)$, then
\[
u_{\lambda(x_2)+C_*\delta^{1-s}} > \varphi\quad\textrm{in}\quad B_\delta(x_\circ)
\]
by Proposition~\ref{prop.kap}. In particular, $\lambda(y) < \lambda(x_2)+C_*\delta^{1-s}$ for any $y\in B_\delta(x_2)$, so that $\lambda(x_1)<\lambda(x_2)+C_*\delta^{1-s}$. That is,
\[
|\lambda(x_1)-\lambda(x_2)|\le C_*\delta^{1-s}
\]
and $\lambda(x)$ is continuous (in fact, it is $(1-s)$-H\"older continuous).

Let us now show that
\[\Gamma \cap B_{1-h} \ni x_\circ \mapsto \bar u^{x_\circ}_{\lambda(x_\circ)}\]
is also continuous (in the $C^0$-norm).
From the definition of $\bar u^{x_\circ}_{\lambda(x_\circ)}$, Definition~\ref{defi.baru}, and since $\varphi$ is continuous, it is enough to show that $\Gamma\cap B_{1-h} \ni x_\circ\mapsto u_{\lambda(x_\circ)}(x_\circ+\,\cdot)$ is continuous. Moreover, since each $u_\lambda$ is continuous (and in fact, they are uniformly $C^{2s}$), we will show that $\Gamma\ni x_\circ\mapsto u_{\lambda(x_\circ)}$ is continuous, in the sense that, for every $\eps > 0$, there exists some $\delta > 0$ such that if $x, z\in \Gamma\cap B_{1-h} $ (for some $h > 0$), $|x-z|\le \delta$, then
\[
\sup_{B_1} |u_{\lambda(x)} - u_{\lambda(z)}|\le \eps.
\]

Let us argue by contradiction. Suppose that it is not true, and that there exist sequences $x_i, z_i \in B_{1-h}\cap \Gamma$ such that $|x_i - z_i|\le \frac{1}{i}$ and
\[
\sup_{B_1} |u_{\lambda(x_i)} - u_{\lambda(z_i)}|\ge \eps_\circ > 0,
\]
for some $\eps_\circ > 0$. In particular, let us assume that $\lambda(x_i) > \lambda(z_i)$, so that $u_{\lambda(x_i)} \ge u_{\lambda(z_i)}$. After taking a subsequence (by compactness, using also that $\|u_\lambda\|_{C^{2s}(B_1)}\le M$), we can assume that there exists some ball $B_\rho(y)\subset B_1$ such that
\[
u_{\lambda(x_i)} \ge u_{\lambda(z_i)} + \frac{\eps_\circ}{2}\quad\textrm{in}\quad B_\rho(y)\subset B_1
\]
for all $i\in \N$. (The radius $\rho$ depends only on $n$, $\eps_\circ$, and $M$.) By interior Harnack's inequality, we have that
\[
u_{\lambda(x_i)} \ge u_{\lambda(z_i)} + c\frac{\eps_\circ}{2}\quad\textrm{in}\quad B_{h/2}(z_i)\cap \{|x_{n+1}|\ge h/4\},
\]
for some constant $c$ depending on $\rho$ and $h$. After translating and scaling, we are in a situation to apply Corollary~\ref{cor.u1u2}. In particular, for some $\delta > 0$ (depending on $\eps_\circ$ and $h$), $|x_i - z_i|\ge \delta > 0$. This is a contradiction with $|x_i -z_i|\le \frac{1}{i}$ for $i \in \N$ large enough. Therefore, $x_\circ \mapsto \bar u^{x_\circ}_{\lambda(x_\circ)}$ is continuous.
\end{proof}

The following lemma improves Lemma~\ref{lem.ulambda} in case $x_\circ\in \Gamma_2$. We denote here $a_- := \max\{0, -a\}$.

\begin{lem}
\label{lem.ulambda_theta}
Let $u_\lambda$ satisfy \eqref{eq.thinobst_lam_vp}-\eqref{eq.uassump}, and let $\varphi$ satisfy \eqref{eq.obst}. Let $n \ge 2$, and $h >0$ small.  Let $x_\circ\in B_{1-h}\cap \Gamma^\lambda_{2}$. Then, for each $\eta >0 $ small, and for $\mu > \lambda$,
\begin{enumerate}[label=(\roman*)]
 \item if $s \ge \frac12$,
\[
\de^+_\lambda \bar u^{x_\circ}_{\mu}(0)= \de^+_\lambda u_{\mu}(x_\circ) \ge c~{\rm dist}^{\eta+a_-}(x_\circ, \Lambda( u_\mu))  =c~{\rm dist}^{\eta-a}(0, \Lambda( \bar u^{x_\circ}_\mu)),
\]
\item if $s \le \frac12$,
\[
\de^+_\lambda \bar u^{x_\circ}_{\mu}(0)= \de^+_\lambda u_{\mu}(x_\circ) \ge c~{\rm dist}^{\eta+a_-}(x_\circ, \Lambda( u_\mu))  =c~{\rm dist}^{\eta}(0, \Lambda( \bar u^{x_\circ}_\mu)),
\]
\end{enumerate}
for some constant $c >0$ independent of $\lambda$ and $\mu$ (but possibly depending on everything else).
\end{lem}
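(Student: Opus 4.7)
The plan is to parallel Lemma~\ref{lem.ulambda} but to replace the coarse barrier---which was forced to vanish on the full thin space $\{x_{n+1}=0\}$ and hence behaved like $|x_{n+1}|^{2s}$---by a sharper one that only vanishes on $\Lambda(u_\mu)$, a much smaller set because $x_\circ\in\Gamma^\lambda_2$. Fix $\eps>0$ small and set $w_\eps(x):=\eps^{-1}(u_{\mu+\eps}(x)-u_\mu(x))$. From \eqref{eq.uassump} and the monotonicity \eqref{eq.subset}, $w_\eps\ge 0$ in $B_1$, $w_\eps\ge 1$ on $B_1\cap\{|x_{n+1}|\ge\tfrac12\}$, and $L_a w_\eps=0$ in $B_1\setminus\Lambda(u_\mu)$. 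Let $r:=\mathrm{dist}(x_\circ,\Lambda(u_\mu))>0$ (strictly positive by Proposition~\ref{prop.kap} with $\kappa=2$). It suffices to prove $w_\eps(x_\circ)\ge c\,r^{\eta+a_-}$ uniformly in $\eps$; the lemma then follows by $\liminf_{\eps\downarrow 0}$, and the two expressions in (i)--(ii) are equal because $\Lambda(\bar u_\mu^{x_\circ})=\Lambda(u_\mu)-x_\circ$ on the thin space.

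I would choose as barrier the $L_a$-harmonic function $\psi$ in $B_1\setminus\Lambda(u_\mu)$ with $\psi=0$ on $\Lambda(u_\mu)$, $\psi=1$ on $\partial B_1\cap\{|x_{n+1}|\ge\tfrac12\}$, and $\psi=0$ on the remainder of $\partial B_1$; comparison then gives $w_\eps\ge\psi$. The gain over Lemma~\ref{lem.ulambda} is that $\psi$ is no longer required to vanish on the whole thin space, only on $\Lambda(u_\mu)\subset\Lambda(u_\lambda)$, a set with zero $n$-density at $x_\circ$ by the quadratic blow-up characterisation of $\Gamma^\lambda_2$. To estimate $\psi(x_\circ)$ I would then rescale $B_r(x_\circ)$ to unit size, use the quadratic blow-up of $\bar u_\lambda^{x_\circ}$ combined with Corollary~\ref{cor.lamcont} (and the forced smallness $\mu-\lambda\lesssim r^{2-2s}$ from Proposition~\ref{prop.kap}) to embed the rescaled $\Lambda(u_\mu)$ into a small neighbourhood of the zero set on the thin space of the $2$-homogeneous harmonic blow-up, a set of Hausdorff dimension at most $n-1$, and bound its $a$-capacity accordingly. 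Comparing $\psi$ to the unobstructed $L_a$-harmonic function with the same outer data via boundary Harnack for $A_2$-weights (\cite{FJK83}) and subtracting the defect produced by this small capacity yields $\psi(x_\circ)\ge c\,r^{\eta+a_-}$. The extra $a_-$ comes from the weight $|x_{n+1}|^a$: when $a<0$ (case $s\ge\tfrac12$), the weight blows up near the thin space and the barrier naturally picks up a factor $|x_{n+1}|^{-a}=|x_{n+1}|^{a_-}$; when $a\ge 0$ (case $s\le\tfrac12$) this factor disappears, accounting for the two equivalent formulations (i) and (ii).

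The main obstacle is converting the purely asymptotic density statement at the singular point $x_\circ$ into a quantitative capacity estimate at the finite scale $r$. Without a uniform modulus for the order-$2$ decay, I would argue by compactness on the rescalings $\bar u_\lambda^{x_\circ}(rx)/r^2$, using the uniform $C^{2s}$-bound from \eqref{eq.uassump} and the continuity of $x\mapsto\bar u^x_{\lambda(x)}$ from Corollary~\ref{cor.lamcont} to propagate the capacity bound from $\lambda$ to $\mu$ across the sub-quadratic gap allowed by Proposition~\ref{prop.kap}. The arbitrarily small loss $\eta>0$ seems unavoidable at this level of generality, but it is harmless for the dimension estimate on $\Gamma_2$ carried out in Section~\ref{sec.dimGam2}, where replacing the exponent $2s$ by $\eta+a_-$ is exactly what is needed.
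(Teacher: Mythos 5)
Your overall strategy is the same as the paper's: replace the barrier of Lemma~\ref{lem.ulambda}, which vanishes on the whole thin space and hence only yields the exponent $2s$, by one vanishing only on $\Lambda(u_\mu)$, and exploit that near a point of $\Gamma^\lambda_2$ the contact set is degenerate. You also correctly identify the target exponent $\eta+a_-$ and the reason for the dichotomy between $s\le\frac12$ and $s\ge\frac12$. However, the pivotal quantitative step is asserted rather than proved. ``Comparing $\psi$ to the unobstructed $L_a$-harmonic function with the same outer data \ldots and subtracting the defect produced by this small capacity'' is not an argument: a zero Dirichlet condition on a set of small capacity at a \emph{single} scale perturbs a harmonic function by a small amount, but here the contact set accumulates at $x_\circ$ through \emph{all} scales between $r$ and $1$, and the whole content of the lemma is that the resulting decay toward $x_\circ$ is only $r^{\eta+a_-}$ rather than, say, $r^{2s}$. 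What the paper actually does is: (a) use that the blow-up $p_2$ satisfies $p_2(x',0)\ge c\,x_1^2$ after a rotation, so that $\Lambda(\bar u^{x_\circ}_\mu)\cap B_{\rho_A}$ is contained in the degenerate cone $\mathcal{C}_A=\{x_1^2+x_{n+1}^2\le A^{-1}|x'|^2\}$ (and note that $\Lambda(u_\mu)\subset\Lambda(u_\lambda)$ by plain monotonicity, so no propagation from $\lambda$ to $\mu$ via Proposition~\ref{prop.kap} or Corollary~\ref{cor.lamcont} is needed); (b) introduce the \emph{homogeneous} $a$-harmonic function $\psi_A$ vanishing on $\mathcal{C}_{A/2}$, whose homogeneity $\eta_\circ$ is the first Dirichlet eigenvalue on the sphere and satisfies $\eta_\circ\downarrow a_-$ as $A\to\infty$ --- using that $\{x_1=x_{n+1}=0\}$ has zero $a$-harmonic capacity when $a\ge 0$, and the explicit eigenfunction $(x_1^2+x_{n+1}^2)^{-a/2}$ when $a<0$; (c) compare and apply Harnack at scale $r$ to get $\psi_{A,r}(0)\ge c\,r^{\eta_\circ}$. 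Your sketch contains none of (b), which is where the exponent actually comes from; ``Hausdorff dimension at most $n-1$'' of the zero set is not by itself the right input (what matters is that it is codimension two in $\R^{n+1}$ and that the contact set is trapped in an arbitrarily thin cone around it, uniformly over scales).

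Two smaller points. First, your heuristic that the exponent $a_-$ arises because ``the barrier naturally picks up a factor $|x_{n+1}|^{-a}$'' is misleading: degeneracy near the full thin space is exactly the Lemma~\ref{lem.ulambda} situation and produces $2s$, not $a_-$; here the relevant degeneracy is only near the codimension-two set $\{x_1=x_{n+1}=0\}$, and the exponent is the bottom of the spectrum of $L_a$ on the sphere minus that set. Second, the compactness argument you propose to ``convert the asymptotic density statement into a quantitative capacity estimate'' is unnecessary: the constant $c$ is allowed to depend on the point $x_\circ$ and on $\bar u^{x_\circ}_\lambda$, so for fixed $x_\circ$ the blow-up convergence directly furnishes, for each $A$, a radius $\rho_A$ for which the cone containment holds; the remaining range $r\ge\rho_A/2$ is handled by falling back on Lemma~\ref{lem.ulambda}. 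If you carry out step (b) --- the eigenvalue computation $\eta_\circ\downarrow a_-$ --- your proof closes and coincides with the paper's.
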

\begin{proof}
Fix some $\mu > 0$ and $\eps >0 $ small, and define
\[
\delta_{\lambda, \eps} \bar u^{x_\circ}_\mu(x) = \frac{\bar u^{x_\circ}_{\mu +\eps}(x) - \bar u^{x_\circ}_{\mu}(x)}{\eps} = \frac{u_{\mu +\eps}(x+x_\circ) - u_{\mu}(x+x_\circ)}{\eps} .
\]
As in the proof of Lemma~\ref{lem.ulambda}, we know that $ \delta_{\lambda, \eps}\bar u^{x_\circ}_\mu (x)\ge 0$, $\delta_{\lambda,\eps} \bar u^{x_\circ}_\mu \ge 1$ on $(-x_\circ + \de{B_1})\cap\{|x_{n+1}|\ge \frac12\}$, and
\begin{equation}
\label{eq.deltharm}
L_a \delta_{\lambda, \eps} \bar u^{x_\circ}_\mu = 0\quad\textrm{in}\quad (-x_\circ+{B_1})\setminus \Lambda(\bar u^{x_\circ}_\mu)\supset (-x_\circ+{B_1})\setminus \Lambda(\bar u^{x_\circ}_\lambda).
\end{equation}

Let us start by showing that, for every $A > 0$, there exists some $\rho_A > 0$ (independent of $\mu$) such that, after a rotation,
\begin{equation}
\label{eq.LA}
\Lambda(\bar u^{x_\circ}_\mu)\cap B_{\rho_A} \subset \{|x'|^2 \ge A x_1^2\}.
\end{equation}
In particular, we will show that, for every $A > 0$, there exists some $\rho_A > 0$ such that, after a rotation,
\begin{equation}
\label{eq.LA_lam}
\Lambda(\bar u^{x_\circ}_\lambda)\cap B_{\rho_A} \subset \{|x'|^2 \ge A x_1^2\}.
\end{equation}
(Notice that now we have taken $\mu\downarrow \lambda$, and since the contact set is decreasing in $\lambda$, \eqref{eq.LA_lam} implies \eqref{eq.LA}.)

Indeed, by \cite[Theorem 8.2]{GR19}, we know that
\[
\bar u^{x_\circ}_\lambda(x) = p_2(x) + o(|x|^2)
\]
for some 2-homogeneous, $a$-harmonic polynomial, such that $p_2\ge 0$ on $\{x_{n+1} = 0\}$ (recall that we are assuming that $x_\circ \in \Gamma^\lambda_{2}$) and $p_2\not\equiv 0$. After a rotation, thus, we may assume that $p_2(x', 0) \ge c x_1^2$. That is,
\[
\bar u^{x_\circ}_\lambda(x', 0) \ge cx_1^2 + o(|x'|^2)> \frac{c}{A}|x'|^2 + o(|x'|^2) > 0\quad\textrm{in}\quad B_{\rho_A} \cap  \{|x'|^2 < A x_1^2\}
\]
if $\rho_A$ is small enough (depending on $A$, but also on the point $x_\circ$, and the function $\bar u^{x_\circ}_\lambda$). That is, \eqref{eq.LA_lam}, and in particular, \eqref{eq.LA}, holds. Considering again the $x_{n+1}$ direction, we know that for every $A> 0$ there exists some $\rho_A$ such that, after a rotation,
\begin{equation}
\label{eq.LA2}
\Lambda(\bar u^{x_\circ}_\mu)\cap B_{\rho_A} \subset \{x_1^2 + x_{n+1}^2\le A^{-1}|x'|^2 \} =: \mathcal{C}_A.
\end{equation}
Notice that $\rho_A \downarrow 0$ as $A\to \infty$. Let us suppose that we are always in the rotated setting so that the previous inclusion holds. Let us denote $\psi_A$ the unique homogeneous solution to
\[
\left\{
\begin{array}{rcll}
L_a \psi_A & = & 0\quad\textrm{in}\quad \R^n\setminus \mathcal{C}_{A/2}\\
\psi_A & = & 0\quad\textrm{in}\quad \mathcal{C}_{A/2}\\
\psi_A & \ge & 0\quad\textrm{in}\quad \R^n,
\end{array}
\right.
\]
such that $\sup_{\de B_1} \psi_A = 1$.

Let  $\eta_\circ > 0$ denote the homogeneity of $\psi_A$ (i.e., $\psi_A(t x) = t^{\eta_\circ}\psi_A(x)$). It corresponds to the first eigenvalue on the sphere $\mathbb{S}^{n}$ of $L_a$ with zero boundary condition on $\mathcal{C}_{A/2}$. Alternatively, it corresponds to the infimum of the corresponding Rayleigh quotient among functions with the same boundary values. Notice that, as $A\to \infty$, $\mathcal{C}_{A/2}\to \{x_1 = x_{n+1} = 0\}$ locally uniformly in the Hausdorff distance, and $\{x_1 = x_{n+1} = 0\}$ has zero $a$-harmonic capacity when $s\le \frac12$ (see \cite[Corollary 2.12]{Kil94}). Thus, when $s\le \frac12$ the infimum of the Rayleigh quotient converges to the first eigenvalue of $L_a$ on the sphere without boundary conditions (namely, 0), and thus, $\eta_\circ \downarrow 0$ as $A\to \infty$ if $a\ge 0$. Alternatively, if $s > \frac12$ the first eigenvalue corresponds to the homogeneity $-a$ (attained by the function $(x_1^2 + x_{n+1}^2)^{-a/2}$), so that $\eta_\circ\downarrow -a$ as $A\to \infty$ if $a<0$. In all, $\eta_\circ\downarrow a_-$, with $a_- =\max\{0, -a\}$.

Let us choose some $A$ large enough such that $\eta_\circ < \eta+a_-$.
Now, let
\[
r:= {\rm dist}(x_\circ, \Lambda(u_\mu)) = {\rm dist}(0, \Lambda(\bar u^{x_\circ}_\mu)),
\]
and let $\psi_{A, r}$ for $r < {\rho_A}/2$ denote the solution to
\[
\left\{
\begin{array}{rcll}
L_a \psi_{A, r} & = & 0&\quad\textrm{in}\quad B_r\cup \left(B_{\rho_A/2} \setminus \mathcal{C}_{A/2}\right)\\
\psi_{A,r} & = & 0&\quad\textrm{in}\quad (B_{\rho_A/2}\cap \mathcal{C}_{A/2})\setminus B_r\\
\psi_{A,r} & = & \psi_A &\quad\textrm{on}\quad \de B_{\rho_A/2}.
\end{array}
\right.
\]

Let $\bar c$ small enough (depending on $\rho_A$, $A$, $h$, $n$, $s$, $M$) such that $\bar c\psi_A \le \delta_{\lambda, \eps} \bar u^{x_\circ}_\mu$ on $\de B_{\rho_A/2}$. For instance, take
\[
\bar c = \inf_{x\in \de B_{\rho_A/2} \cap \mathcal{C}_{A/2}^c} \delta_{\lambda, \eps}\bar u^{x_\circ}_\mu (x) > 0,
\]
which is positive since $\delta_{\lambda, \eps} u_\mu \ge 0$, $\delta_{\lambda, \eps}u_\mu \ge 1 $ on $\de B_1\cap\{|x_{n+1}| = 0\}$, and $L_a \delta_{\lambda, \eps}  u_\mu = 0$ in $(B_1\setminus\{x_{n+1} = 0\})\cup (B_{\rho_A}(x_\circ) \setminus \mathcal{C}_A)$ (recall $\delta_{\lambda, \eps} u_\mu = \delta_{\lambda, \eps} \bar u^{x_\circ}_\mu(\cdot - x_\circ)$), and thus, by strong maximum principle (or Harnack's inequality, see \cite[Theorem 2.3.8]{FKS82}) we must have $\bar c > 0$ depending only on $\rho_A$, $A$, $h$, $n$, $s$, $M$.

Now notice that $\bar c\psi_{A,r} \le \delta_{\lambda, \eps} \bar u^{x_\circ}_\mu$ on $\de B_{\rho_A/2}$, $\bar c\psi_{A,r} \le \delta_{\lambda, \eps} \bar u^{x_\circ}_\mu$ on $B_{\rho_A/2}\cap  \mathcal{C}_{A/2} \setminus B_r$, and both $\bar c\psi_{A,r}$ and $\delta_{\lambda, \eps} \bar u^{x_\circ}_\mu$ are $a$-harmonic in $B_r\cup (B_{\rho_A/2}\setminus \mathcal{C}_{A/2})$ (thanks to \eqref{eq.deltharm}-\eqref{eq.LA2}). By comparison principle
\[
\bar c\psi_{A}\le \bar c\psi_{A,r} \le \delta_{\lambda, \eps} \bar u^{x_\circ}_\mu\quad\textrm{in}\quad B_{\rho_A/2}.
\]

By Harnack's inequality, there exists a constant $C$ depending only on $n$ and $s$ such that
\[
\psi_{A,r}(0) \ge \inf_{B_{r/2}(0)} \psi_{A,r} \ge \frac{1}{C}\sup_{B_{r/2}(0)} \psi_{A,r} \ge  \frac{1}{C}\sup_{B_{r/2}(0)} \psi_{A} \ge cr^{\eta_\circ},
\]
where in the last inequality we are using the $\eta_\circ$-homogeneity of $\psi_A$, and $c$ depends only on $n$ and $a$. Thus,
\[
\delta_{\lambda, \eps} \bar u^{x_\circ}_\mu (0) \ge \bar c \psi_{A,r}(0) \ge c \bar c r^{\eta_\circ} = c~{\rm dist}^{\eta_\circ}(x_\circ, \Lambda(u_\mu)) = c~{\rm dist}^{\eta_\circ}(0, \Lambda(\bar u^{x_\circ}_\mu)),
\]
for some $c > 0$ that might depends on everything, but it is independent of $\mu$ and $\lambda$, where we assumed $r < \rho_A/2$. We can reach all $r > 0$ by taking a smaller $c > 0$ (independent of $\lambda$ and $\mu$), thanks to Lemma~\ref{lem.ulambda}. Recalling $\eta_\circ < \eta+a_-$, and letting $\eps\downarrow 0$, this gives the desired result.
\end{proof}

Using the previous lemma, combined with an ODE argument, we find the following.

\begin{prop}
\label{prop.2grow}
Let $x_\circ \in \Gamma^\lambda_{2}$ be any point of order $2$. Then,
\begin{itemize}
\item If $s\le \frac12$, for every $\eps_\circ > 0$, there exists some $\delta_\circ > 0$ such that
\[
\Gamma^{\lambda+ \delta^{2-\eps_\circ}}_{2} \cap B_\delta(x_\circ) = \varnothing,
\]
for all $\delta\in (0, \delta_\circ)$.
\item If $s > \frac12$, for every $\eps_\circ > 0$, there exists some $\delta_\circ > 0$ such that
\[
\Gamma^{\lambda+ \delta^{2\frac{2-s}{1+s}-\eps_\circ}}_{2} \cap B_\delta(x_\circ) = \varnothing,
\]
for all $\delta\in (0, \delta_\circ)$.
\end{itemize}
\end{prop}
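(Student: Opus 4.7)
I argue by contradiction: suppose that some $y\in B_\delta(x_\circ)\cap \Gamma^\mu_2$ exists with $\mu=\lambda+\tau$, and aim to show that $\tau$ cannot exceed the stated threshold. The idea is to estimate $\Delta := u_\mu(x_\circ)-u_\lambda(x_\circ)$ from above using the quadratic growth at $y$, and from below by integrating the refined derivative estimate of Lemma~\ref{lem.ulambda_theta} along the $\nu$-family, controlling $\mathrm{dist}(x_\circ,\Lambda(u_\nu))$ via Proposition~\ref{prop.kap}.

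For the upper bound, since $y\in \Gamma^\mu_2$ the growth estimate \eqref{eq.growthest} applied to $\bar u^y_\mu$ gives $|\bar u^y_\mu(x)|\le C_M|x|^2$. Restricting to the thin space, where $\bar u^y_\mu(x',0)=u_\mu(y+x',0)-\varphi(y+x')$, and using that $x_\circ\in\Lambda(u_\lambda)$ so that $u_\lambda(x_\circ)=\varphi(x_\circ)$, one obtains $\Delta = u_\mu(x_\circ)-\varphi(x_\circ)\le C|x_\circ-y|^2\le C\delta^2$.

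For the lower bound, write $\Delta = \int_\lambda^\mu \de^+_\nu u_\nu(x_\circ)\,d\nu$. Since $x_\circ\in\Gamma^\lambda_2$, for any fixed $\eta>0$ Lemma~\ref{lem.ulambda_theta} gives $\de^+_\nu u_\nu(x_\circ)\ge c\,\mathrm{dist}(x_\circ,\Lambda(u_\nu))^{\eta+a_-}$, while Proposition~\ref{prop.kap} applied at $x_\circ\in\Gamma^\lambda_{\ge 2}$ with $\kappa=2$ yields $\mathrm{dist}(x_\circ,\Lambda(u_\nu))\ge c(\nu-\lambda)^{1/(2-2s)}$. Plugging in and integrating, one gets $\Delta\ge c\,\tau^{\gamma(\eta,s)}$, with $\gamma(\eta,s)=1+\eta/(2-2s)$ when $s\le\tfrac12$ (since $a_-=0$) and $\gamma(\eta,s)=(\eta+1)/(2-2s)$ when $s>\tfrac12$ (since $a_-=2s-1$). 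Combining with the upper bound yields $\tau\le C'\delta^{2/\gamma(\eta,s)}$. For $s\le\tfrac12$, sending $\eta\downarrow 0$ drives $2/\gamma(\eta,s)$ to $2$, so choosing $\eta$ small (depending only on $\eps_\circ$) and $\delta$ sufficiently small gives the desired $\tau<\delta^{2-\eps_\circ}$.

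\textbf{Main obstacle.} The sharp exponent $2(2-s)/(1+s)$ when $s>\tfrac12$ does not come directly out of this naive combination, which only delivers the strictly weaker exponent $4-4s$. Closing the gap requires improving the retreat estimate of Proposition~\ref{prop.kap} at order-2 points to $\mathrm{dist}(x_\circ,\Lambda(u_\nu))\ge c(\nu-\lambda)^{1/(2-s)}$, which is obtained by rerunning the barrier argument of Proposition~\ref{prop.kap} with Lemma~\ref{lem.ulambda_theta} in place of Lemma~\ref{lem.ulambda} (i.e. replacing the exponent $2s$ by $\eta+a_-$) and exploiting the $2$-homogeneous blow-up at $x_\circ$. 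This bootstrap, which balances the capacity-driven barrier homogeneity $a_-$ against the quadratic thin-space growth, is the delicate step of the argument and is what ultimately yields the $1+s$ and $2-s$ factors in the claimed exponent.
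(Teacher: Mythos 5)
Your argument is correct for $s\le\frac12$, and there it is essentially a repackaging of the paper's proof: both reduce to integrating the lower bound of Lemma~\ref{lem.ulambda_theta} in the parameter against the quadratic upper bound $\bar u^{x_\circ}_\mu(0)\le C\delta^2$ coming from the order-$2$ point in $B_\delta(x_\circ)$. The genuine gap is the case $s>\frac12$, which you have correctly located but not closed. Your lower bound $\mathrm{dist}(x_\circ,\Lambda(u_\nu))\ge c(\nu-\lambda)^{1/(2-2s)}$ from Proposition~\ref{prop.kap} is too lossy (it yields only $4-4s<2\frac{2-s}{1+s}$), and the ``improved retreat estimate'' $\mathrm{dist}(x_\circ,\Lambda(u_\nu))\ge c(\nu-\lambda)^{1/(2-s)}$ you invoke to repair it is asserted, not proved. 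The repair you sketch --- rerunning the barrier argument of Proposition~\ref{prop.kap} with Lemma~\ref{lem.ulambda_theta} in place of Lemma~\ref{lem.ulambda} --- does not go through as stated: Lemma~\ref{lem.ulambda_theta} produces the gain $c\,\mathrm{dist}^{\eta+a_-}$ only \emph{at the point $x_\circ$ itself} (its barrier $\psi_{A,r}$ is adapted to the cone $\mathcal{C}_A$ coming from the blow-up at $x_\circ$ and vanishes on that cone), whereas the $\eps$-lemma (Lemma~\ref{lem.epslem}) underlying Proposition~\ref{prop.kap} requires a uniform gain on the whole slab $B_r\cap\{|x_{n+1}|\ge\sigma r\}$; and even granting a version of that, the exponent such a rerun would produce is $1/(3-2s-\eta)$ rather than the $1/(2-s)$ you write down, so the bookkeeping of the fix is off as well.

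The paper's actual resolution is much simpler and bypasses Proposition~\ref{prop.kap} entirely: by the optimal $C^{1,s}$ regularity, $\bar u^{x_\circ}_\mu(0)\le C\,\mathrm{dist}^{1+s}\big(0,\Lambda(\bar u^{x_\circ}_\mu)\big)$, i.e.\ the distance is bounded \emph{below} by $c\,\big(\bar u^{x_\circ}_\mu(0)\big)^{1/(1+s)}$. Substituting this into Lemma~\ref{lem.ulambda_theta} turns your integrand into an autonomous differential inequality for $g(\mu):=\bar u^{x_\circ}_\mu(0)$, namely $\de^+_\lambda g\ge c\,g^{\frac{\eta+a_-}{1+s}}$, whose integration gives $g(\mu)\ge c(\mu-\lambda)^{\frac{1+s}{1+s-\eta-a_-}}$. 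Combined with the upper bound $g(\mu)\le C\delta^2$ this yields $\mu-\lambda\le C\delta^{\frac{2(1+s-\eta-a_-)}{1+s}}$, which tends to $\delta^{2}$ for $s\le\frac12$ and to $\delta^{2\frac{2-s}{1+s}}$ for $s>\frac12$ as $\eta\downarrow0$ --- exactly the claimed exponents. Replacing your distance estimate by this optimal-regularity step makes the rest of your argument (quadratic upper bound at $z_\circ$, integration in the parameter) go through.
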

\begin{proof}
We use Lemma~\ref{lem.ulambda_theta}. We know that, for each $\eta >0$ small,
\[
\de^+_\lambda \bar u^{x_\circ}_{\mu}(0) \ge c~{\rm dist}^{\eta+a_-}(0,  \Lambda(\bar u^{x_\circ}_\mu))\quad\textrm{for}\quad \mu > \lambda.
\]
On the other hand, from the optimal regularity for the thin obstacle problem, we know that
\[
\bar u^{x_\circ}_\mu(0) \le C{\rm dist}^{1+s}(0, \Lambda(\bar u^{x_\circ}_\mu)),
\]
which gives
\[
\de^+_\lambda \bar u^{x_\circ}_{\mu}(0) \ge c(\bar u^{x_\circ}_\mu(0) )^{\frac{\eta+a_-}{1+s}}.
\]
Solving the ODE between $\lambda$ and $\mu$, this yields
\[
\bar u^{x_\circ}_{\mu}(0)^{1-\frac{\eta+a_-}{1+s}}\ge c(\mu-\lambda)\quad\Longleftrightarrow \quad \bar u^{x_\circ}_{\mu}(0)\ge c(\mu-\lambda)^{\frac{2+2s}{3-2\eta-|a|}}.
\]
Let us now suppose that there exists some $z_\circ\in B_\delta(x_\circ)\cap \Gamma^\mu_{2}$. Notice that $\bar u^{z_\circ}_\mu$ has quadratic growth around zero (since $z_\circ$ is a singular point of order 2), that is $\bar u^{z_\circ}_\mu\le C\rho^2$ in $B_\rho'\times\{0\}$ for $\rho > 0$. Thus, using that $\bar u^{x_\circ}_\mu = \bar u^{z_\circ}_\mu(\cdot + x_\circ- z_\circ )$ in $B_1'$
\[
C\delta^2 \ge \bar u^{z_\circ}_\mu(x_\circ- z_\circ ) = \bar u^{x_\circ}_\mu(0) \ge c(\mu-\lambda)^{\frac{2+2s}{3-2\eta-|a|}},
\]
that is, $\mu - \lambda \le C\delta^{\frac{3-2\eta-|a|}{1+s}}$. In particular, whenever $\mu - \lambda > C\delta^{\frac{3-2\eta-|a|}{1+s}}$ then $B_\delta(x_\circ)\cap \Gamma^\mu_{2} = \varnothing$.

Taking $\delta$ and $\eta$ small enough we get the desired result.
\end{proof}

\section{Dimension of $\Gamma_2$}
\label{sec.dimGam2}
In this section we prove that $\Gamma_2  = \bigcup_{\lambda\in [0,1]} \Gamma_2^\lambda$ has dimension at most $n-1$.

\begin{prop}
\label{prop.uniconv}
Let $m\in \N$, and suppose $2m < \tau +\alpha$. Let us denote $p_{2m}^{x_\circ}$ the blow-up of $\bar  u_{\lambda(x_\circ)}^{x_\circ}$ at $x_\circ\in \Gamma_{2m}$. Then, the mapping $\Gamma_{2m}\ni x_\circ\mapsto p_{2m}^{x_\circ}$ is continuous. Moreover, for any compact set $K \subset \Gamma_{2m}$ there exists a modulus of continuity $\sigma_K$ such that
\[
|\bar u_{\lambda(x_\circ)}^{x_\circ}(x) - p_{2m}^{x_\circ}(x)|\le \sigma_K(|x|) |x|^{2m}
\]
for any $x_\circ\in K$.
\end{prop}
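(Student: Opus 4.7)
The plan rests on a Monneau-type monotonicity formula at singular points of order $2m$, combined with the continuity of $x_\circ\mapsto \bar u^{x_\circ}_{\lambda(x_\circ)}$ from Corollary~\ref{cor.lamcont}. First, I would recall that at each $x_\circ\in \Gamma_{2m}$ the Monneau-type formula of \cite{GP09, GR19} ensures that, for every admissible $2m$-homogeneous, even, $a$-harmonic polynomial $p\ge 0$ on the thin space, the quantity
\[
\mathcal{M}(r, x_\circ, p) := \frac{1}{r^{n+a+4m}}\int_{\de B_r}(\bar u^{x_\circ}_{\lambda(x_\circ)}-p)^2|x_{n+1}|^a
\]
satisfies that $r\mapsto \mathcal{M}(r, x_\circ, p) + C r^{2\eta}$ is nondecreasing, where $\eta>0$ is chosen so that $2\eta < \tau+\alpha-2m$, controlling the right-hand side $|x_{n+1}|^a f$ of \eqref{eq.thinobst_lam_vp_f}. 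The blow-up $p_{2m}^{x_\circ}$ is then the unique admissible polynomial with $\mathcal{M}(0^+, x_\circ, p_{2m}^{x_\circ}) = 0$, and by \eqref{eq.growthest} the family $\{p_{2m}^{x_\circ}\}_{x_\circ}$ lies in a bounded, hence precompact, subset of a finite-dimensional vector space.

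For the continuity of $x_\circ\mapsto p_{2m}^{x_\circ}$, I would take $x_j\to x_\infty$ in $\Gamma_{2m}$ and, by compactness, assume $p_{2m}^{x_j}\to q$ for some admissible $q$. Corollary~\ref{cor.lamcont} gives $\bar u^{x_j}_{\lambda(x_j)}\to \bar u^{x_\infty}_{\lambda(x_\infty)}$ in $C^0$, so $\mathcal{M}(R, x_j, p_{2m}^{x_j})\to \mathcal{M}(R, x_\infty, q)$ for each fixed $R$; the monotonicity combined with $\mathcal{M}(0^+, x_\infty, p_{2m}^{x_\infty})=0$ and the $2m$-homogeneity of $q-p_{2m}^{x_\infty}$ then forces $q=p_{2m}^{x_\infty}$. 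For the uniform modulus on a compact set $K\subset \Gamma_{2m}$, I argue by contradiction: suppose there exist $\eps_\circ>0$, points $x_j\in K$ and scales $r_j\downarrow 0$ with
\[
\bigl\|\bar u^{x_j}_{\lambda(x_j)}-p_{2m}^{x_j}\bigr\|_{L^\infty(B_{r_j/2})}\ge \eps_\circ r_j^{2m}.
\]
After rescaling $v_j(x):=\bar u^{x_j}_{\lambda(x_j)}(r_jx)/r_j^{2m}$, the growth estimate \eqref{eq.growthest} gives $\|v_j\|_{L^\infty(B_1)}\le C$, and $C^{\alpha}$ regularity for the rescaled thin obstacle problem yields, up to subsequence, a locally uniform limit $v_j\to v_\infty$ with $x_j\to x_\infty\in K$.

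A direct rescaling computation then gives
\[
\mathcal{M}(\rho r_j, x_j, p_{2m}^{x_j}) = \frac{1}{\rho^{n+a+4m}}\int_{\de B_\rho}(v_j-p_{2m}^{x_j})^2|x_{n+1}|^a,
\]
while the Monneau monotonicity at $x_j$ between scales $\rho r_j$ and a fixed $R_0$ provides
\[
\mathcal{M}(\rho r_j, x_j, p_{2m}^{x_j})\le \mathcal{M}(R_0, x_j, p_{2m}^{x_j}) + C R_0^{2\eta}.
\]
Letting first $j\to\infty$ (using the continuity just proved and the $C^0$-convergence of $\bar u^{x_j}_{\lambda(x_j)}$) and only then $R_0\to 0$, the right-hand side vanishes; hence $\int_{\de B_\rho}(v_\infty-p_{2m}^{x_\infty})^2|x_{n+1}|^a = 0$ for every $\rho\in (0, 1]$, and therefore $v_\infty\equiv p_{2m}^{x_\infty}$ on $B_1$, contradicting $\|v_\infty-p_{2m}^{x_\infty}\|_{L^\infty(B_{1/2})}\ge \eps_\circ$. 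The main obstacle is precisely this final step: a ``double limit'' in which $x_j$ moves while $r_j\to 0$ with no a priori relation between the two rates. The resolution is to apply the Monneau formula at the moving base point $x_j$, so that Corollary~\ref{cor.lamcont} decouples the two limits by letting $j\to\infty$ first at a fixed outer scale $R_0$ before sending $R_0\to 0$.
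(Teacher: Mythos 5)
Your proposal is essentially the paper's own proof: the paper disposes of this proposition by saying it ``follows exactly as [GP09, Theorem 2.8.4]'' once one adds the continuity of $x_\circ\mapsto\lambda(x_\circ)$ and of $x_\circ\mapsto\bar u^{x_\circ}_{\lambda(x_\circ)}$ from Corollary~\ref{cor.lamcont}, and what you wrote is precisely that argument unpacked --- Monneau almost-monotonicity at the moving base point, with Corollary~\ref{cor.lamcont} used to let $j\to\infty$ at a fixed outer scale before sending the scale to zero. The uniform-modulus half of your argument is complete and correct. One step in the continuity half, however, does not close as literally written: from $\mathcal{M}(R,x_j,p_{2m}^{x_j})\to\mathcal{M}(R,x_\infty,q)$ at a fixed scale $R$ you cannot conclude $q=p_{2m}^{x_\infty}$, because sending $R\to0$ afterwards only returns $\mathcal{M}(0^+,x_\infty,q)=\|p_{2m}^{x_\infty}-q\|^2_{L^2(\de B_1,|x_{n+1}|^a)}$ on both sides of the inequality; the almost-monotonicity bounds small scales by large ones, so the fixed-scale quantity $\mathcal{M}(R,x_j,p_{2m}^{x_j})$ has no reason to be small. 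The standard fix (and the one visible in the displayed chain in the paper's proof of Proposition~\ref{prop.whitapp}) is to run the Monneau formula at the moving points $x_j$ but with the \emph{fixed} polynomial $p_{2m}^{x_\infty}$: one has $\|p_{2m}^{x_j}-p_{2m}^{x_\infty}\|^2_{L^2(\de B_1,|x_{n+1}|^a)}=\mathcal{M}(0^+,x_j,p_{2m}^{x_\infty})\le\mathcal{M}(R,x_j,p_{2m}^{x_\infty})+CR^{2\eta}$, then one lets $j\to\infty$ at fixed $R$ (Corollary~\ref{cor.lamcont}) and finally $R\to0$, where the right-hand side tends to $\mathcal{M}(0^+,x_\infty,p_{2m}^{x_\infty})=0$; this also dispenses with extracting the subsequential limit $q$. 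With this one-line correction the argument is complete and matches the paper's.
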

\begin{proof}
This follows exactly as the proof of \cite[Theorem 2.8.4]{GP09} (or \cite[Theorem 8.2]{GR19}) using that $\Gamma_{2m}\ni x_\circ\mapsto \lambda(x_\circ)$ and $\Gamma_{2m} \ni x_\circ \mapsto \bar u_{\lambda(x_\circ)}^{x_\circ}$ are continuous (see Corollary~\ref{cor.lamcont}).
\end{proof}

Singular points (that is, points of order $2m < \tau +\alpha$) have a non-degeneracy property. Namely, as proved in \cite[Lemma 8.1]{GR19}, if $x_\circ \in \Gamma_{2m}^\lambda$, then there exists some constant $C > 0$ (depending on the point $x_\circ$) such that
\[
C^{-1} r^{2m} \le \sup_{\de B_r} |\bar u_\lambda^{x_\circ}|\le C r^{2m}.
\]
In particular, we can further divide the set $\Gamma_{2m}$ according to the degree of degeneracy of the singular point. That is, let us define
\[
\Gamma_{2m, j} := \{x_\circ \in B_{1-j^{-1}} \cap \Gamma_{2m} : j^{-1} r^{2m} \le \sup_{\de B_r} |\bar u_{\lambda(x_\circ)}^{x_\circ}|\le j r^{2m} \textrm{ for all } r \le (2j)^{-1}\},
\]
so that
\[
\Gamma_{2m} = \bigcup_{j\in \N} \Gamma_{2m,j},
\]
and each $\Gamma_{2m,j}\subset \Gamma_{2m}$ is compact (see \cite[Lemma 2.8.2]{GP09}, which only uses the upper semi-continuity of the frequency formula with respect to the point).

In the next proposition we are going to use a Monneau-type monotonicity formula. In particular, we will use that, if we define for $m\in \N$, $x_\circ \in \Gamma_{2m}^\lambda$,
\begin{equation}
\label{eq.Monneau1}
\mathcal{M}_m(r, \bar u^{x_\circ}_{\lambda}, p_{2m}) := \frac{1}{r^{n+a+4m}}\int_{\de B_r} (\bar u_\lambda^{x_\circ} - p_{2m})^2|x_{n+1}|^a,
\end{equation}
for any $2m$-homogeneous, $a$-harmonic, even polynomial $p_{2m}$ with $p_{2m}(x', 0)\ge 0$, such that $p_{2m}\le C$ for some universal bound $C$, then
\begin{equation}
\label{eq.Monneau2}
\frac{d}{dr} \mathcal{M}_m(r, \bar u^{x_\circ}_{\lambda}, p_{2m}) \ge -C_M r^{\alpha -1}
\end{equation}
for some constant $C_M$ independent of $\lambda$. (See \cite[Proposition 7.2]{GR19} and \cite[Theorem 2.7.2]{GP09}.)

\begin{prop}
\label{prop.whitapp}
Let $m\in \N$, and suppose $2m < \tau +\alpha$. Let us denote $p_{2m}^{x_\circ}$ the blow-up of $\bar  u_{\lambda(x_\circ)}^{x_\circ}$ at $x_\circ\in \Gamma_{2m}$. Then, for each $j\in \N$ there exists a modulus of continuity $\sigma_j$ such that
\[
\|p_{2m}^{x_\circ} - p_{2m}^{z_\circ}\|_{L^2(\de B_1, |x_{n+1}|^a)}\le \sigma_j (|x_\circ - z_\circ|)
\]
for all $x_\circ, z_\circ \in \Gamma_{2m, j}$.
\end{prop}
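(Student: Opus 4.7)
The argument is the Monneau-type continuity scheme adapted to the parametric setting. Three ingredients enter: the monotonicity formula \eqref{eq.Monneau1}--\eqref{eq.Monneau2}, the uniform convergence of blow-ups on compact subsets of $\Gamma_{2m}$ provided by Proposition~\ref{prop.uniconv}, and the $C^0$-continuity of $x_\circ\mapsto \bar u^{x_\circ}_{\lambda(x_\circ)}$ on compact subsets of $\Gamma\cap B_{1-h}$ given by Corollary~\ref{cor.lamcont}. Observe that for $x_\circ\in\Gamma_{2m,j}$ the two-sided non-degeneracy built into the definition of $\Gamma_{2m,j}$ forces $\|p^{x_\circ}_{2m}\|_{L^\infty(B_1)}\le Cj$ (after passing to the blow-up limit), so the admissible test polynomials have uniformly bounded coefficients. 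In particular, $\Gamma_{2m,j}$ is compact (cf.\ \cite[Lemma 2.8.2]{GP09}), and Proposition~\ref{prop.uniconv} furnishes a single modulus $\tilde\sigma_j$ valid for every $x_\circ\in\Gamma_{2m,j}$.

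The key step is to apply Monneau's inequality to $\bar u^{z_\circ}_{\lambda(z_\circ)}$ tested against the ``wrong'' polynomial $p^{x_\circ}_{2m}$. Integrating \eqref{eq.Monneau2} between $0$ and $r$ yields
\[
\mathcal{M}_m(0^+,\bar u^{z_\circ}_{\lambda(z_\circ)},p^{x_\circ}_{2m}) \le \mathcal{M}_m(r,\bar u^{z_\circ}_{\lambda(z_\circ)},p^{x_\circ}_{2m}) + \tfrac{C_M}{\alpha}r^\alpha.
\]
A direct change of variables gives the rescaled identity
\[
\mathcal{M}_m(r,v,q)^{1/2} = \bigl\|r^{-2m}v(r\,\cdot) - q\bigr\|_{L^2(\de B_1,|x_{n+1}|^a)}
\]
for any $2m$-homogeneous $q$. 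Since Proposition~\ref{prop.uniconv} ensures $r^{-2m}\bar u^{z_\circ}_{\lambda(z_\circ)}(r\,\cdot)\to p^{z_\circ}_{2m}$ uniformly as $r\downarrow 0$, the left-hand side above equals $\|p^{z_\circ}_{2m} - p^{x_\circ}_{2m}\|_{L^2(\de B_1,|x_{n+1}|^a)}^2$.

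It remains to bound the right-hand side. By the triangle inequality,
\[
\mathcal{M}_m(r,\bar u^{z_\circ}_{\lambda(z_\circ)},p^{x_\circ}_{2m})^{1/2} \le r^{-2m}\bigl\|\bar u^{z_\circ}_{\lambda(z_\circ)}(r\,\cdot) - \bar u^{x_\circ}_{\lambda(x_\circ)}(r\,\cdot)\bigr\|_{L^2(\de B_1,|x_{n+1}|^a)} + \mathcal{M}_m(r,\bar u^{x_\circ}_{\lambda(x_\circ)},p^{x_\circ}_{2m})^{1/2}.
\]
The second summand is at most $C\tilde\sigma_j(r)$ by Proposition~\ref{prop.uniconv}, while the first summand is at most $Cr^{-2m}\omega(|x_\circ-z_\circ|)$, where $\omega$ is the $C^0$-continuity modulus from Corollary~\ref{cor.lamcont}. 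Combining everything,
\[
\|p^{z_\circ}_{2m} - p^{x_\circ}_{2m}\|_{L^2(\de B_1,|x_{n+1}|^a)}^2 \le C\tilde\sigma_j(r)^2 + Cr^{-4m}\omega(|x_\circ-z_\circ|)^2 + \tfrac{C_M}{\alpha}r^\alpha,
\]
and the desired $\sigma_j$ is obtained by a diagonal choice: given $\eta>0$, first fix $r=r(\eta)$ so small that $C\tilde\sigma_j(r)^2+\tfrac{C_M}{\alpha}r^\alpha<\eta/2$, then choose $\delta=\delta(\eta,r)$ so small that $Cr^{-4m}\omega(\delta)^2<\eta/2$; this defines $\sigma_j(\delta)$.

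The main technical point is ensuring that the Monneau constant $C_M$ in \eqref{eq.Monneau2} is uniform in $\lambda$ and in the test polynomial $p^{x_\circ}_{2m}$ as $x_\circ$ varies in $\Gamma_{2m,j}$. Uniformity in $\lambda$ follows from the uniform bounds \eqref{eq.uassump}--\eqref{eq.Mf2}; uniformity in the polynomial follows from the coefficient bound of the first paragraph. Once this bookkeeping is in place, the remainder of the argument is the classical Monneau-type continuity proof, now carried out across the $\lambda$-parametric family through Corollary~\ref{cor.lamcont}.
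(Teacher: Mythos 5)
Your proof is correct, but it takes a genuinely different route from the paper's. You run the classical Monneau ``wrong polynomial'' continuity argument directly: integrate \eqref{eq.Monneau2} for $\mathcal{M}_m(\cdot,\bar u^{z_\circ}_{\lambda(z_\circ)},p^{x_\circ}_{2m})$, split at a \emph{fixed} scale $r$ via the triangle inequality, control the cross term $r^{-2m}\|\bar u^{z_\circ}_{\lambda(z_\circ)}(r\,\cdot)-\bar u^{x_\circ}_{\lambda(x_\circ)}(r\,\cdot)\|$ by the uniform $C^0$-modulus of $x_\circ\mapsto \bar u^{x_\circ}_{\lambda(x_\circ)}$ on the compact set $\Gamma_{2m,j}$ (Corollary~\ref{cor.lamcont} together with compactness of $\Gamma_{2m,j}$), and finish with the diagonal choice of $r$ and then $\delta$. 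The paper instead argues by compactness and contradiction: it rescales at the critical scale $\rho_k=|x_k-z_k|$ --- where a quantitative $C^0$-bound is useless after division by $\rho_k^{2m}$ --- and uses only the one-sided ordering $u_{\lambda(z_k)}\ge u_{\lambda(x_k)}$ plus the $o(\rho_k^\tau)$ comparison of the Taylor corrections to show that the two blow-up limits are \emph{ordered}; a Liouville argument and homogeneity then force them to coincide, and Monneau is invoked twice (once at each center, against the common limit polynomial) to reach the contradiction. Your version is shorter and more quantitative; the points to make explicit are (i) that the modulus furnished by Corollary~\ref{cor.lamcont} is uniform over $\Gamma_{2m,j}$ (immediate from compactness, and in fact what the corollary's proof establishes), and (ii) that $p^{x_\circ}_{2m}$ is an admissible test polynomial for \eqref{eq.Monneau1}--\eqref{eq.Monneau2} with a constant $C_M$ depending only on $j$, which you correctly extract from the upper bound in the definition of $\Gamma_{2m,j}$. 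Both arguments rest on the same three ingredients (the Monneau formula, Proposition~\ref{prop.uniconv}, and Corollary~\ref{cor.lamcont}), assembled differently.
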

\begin{proof}
Suppose it is not true. That is, suppose that there exist sequences $x_k, z_k \in \Gamma_{2m, j}$ with $k\in \N$, such that $|x_k-z_k|\to 0$ and
\begin{equation}
\label{eq.contM}
\|p_{2m}^{x_k} - p_{2m}^{z_k}\|_{L^2(\de B_1, |x_{n+1}|^a)} \ge \delta > 0
\end{equation}
for some $\delta > 0$. Suppose also that $\lambda(x_k)\le \lambda(z_k)$.

Let $\rho_k :=|x_k - z_k|\downarrow 0$ as $k \to \infty$. Let us define
\[
v_x^k(x) := \frac{\bar u^{x_k}_{\lambda(x_k)}(\rho_k x)}{\rho_k^{2m}}
\qquad\textrm{and}\qquad
v_z^k(x):= \frac{\bar u^{z_k}_{\lambda(z_k)}(\rho_k x+x_k-z_k)}{\rho_k^{2m}}.
\]

We have that
\begin{align*}
v_z^k(x) - v_x^k(x) & = \rho_k^{-2m}\big\{u_{\lambda(z_k)}(\rho_k x + x_k) - u_{\lambda(x_k)}(\rho_k x + x_k)+ Q_\tau^{x_k}(\rho_k x')\\
& \quad -Q_\tau^{z_k}(\rho_k x'+x_k'-z_k')-{\rm Ext}_a ( Q_\tau^{x_k}(\rho_k \cdot )-Q_\tau^{z_k}(\rho_k \cdot +x_k'-z_k'))(x', x_{n+1})\big\},
\end{align*}
where, if $p = p(x'):\R^{n}\to \R$ is a polynomial, ${\rm Ext}_a (p) (x', x_{n+1})$ denotes its unique even $a$-harmonic extension.

Notice that $u_{\lambda(z_k)} \ge u_{\lambda(x_k)}$ (since $\lambda(z_k) \ge \lambda(x_k)$). On the other hand, let us study the convergence of the degree $\tau$ polynomials $P_\tau^k (x') = Q_\tau^{x_k}(\rho_k x')-Q_\tau^{z_k}(\rho_k x'+x_k'-z_k')$. First, observe that
\[
|P_\tau^k(0)| = |Q_\tau^{x_k}(0) - Q_\tau^{z_k}(x_k' - z_k')| = |\varphi(x_k') - Q_\tau^{z_k}(x_k' - z_k')| = o(\rho_k^\tau),
\]
since $Q_\tau^{x_k}$ and $Q_\tau^{z_k}$ are the Taylor expansions of $\varphi$ of order $\tau$ at $x_k$ and $z_k$ respectively, and $|x_k'-z_k'| = \rho_k$. Similarly, for any multi-index $\beta = (\beta_1, \dots, \beta_{n-1})$ with $|\beta|\le \tau$,
\[
|D^\beta P_\tau^k(0)| = \rho_k^{|\beta|}\left| D^\beta\varphi(x_k) - D^\beta Q_\tau^{z_k}(x_k'-z_k') \right| = o(\rho_k^\tau).
\]
Thus, the  $P_\tau^k = o(\rho_k^\tau)$ (say, in any norm in $B_1'$), and so the same occurs with the $a$-harmonic extension. Notice, also, that by assumption, $2m\le \tau$. In all, we have that
\begin{equation}
\label{eq.difo1}
v_z^k(x) - v_x^k(x)\ge o(1).
\end{equation}

On the other hand, we have
\begin{equation}
\label{eq.vx}
|v_x^k(x) - p_{2m}^{x_k}(x)|\le \sigma_{K, j}(\rho_k |x|)|x|^{2m}
\end{equation}
thanks to Proposition~\ref{prop.uniconv} with $K = \Gamma_{2m, j}$, and for some modulus of continuity $\sigma_{K, j}$ depending on $j$. Similarly, if we denote
\[
\xi_k = \frac{z_k - x_k}{\rho_k}\in \mathbb{S}^{n},
\]
then
\begin{equation}
\label{eq.vz}
|v_z^k(x) - p_{2m}^{z_k}(x-\xi_k)|\le \sigma_{K, j}(\rho_k|x-\xi_k|)|x-\xi_k|^{2m}.
\end{equation}

From the definition of $\Gamma_{2m, j}$ we know that
\begin{equation}
\label{eq.bound}
j^{-1} r^{2m} \le \sup_{\de B_r} |p_{2m}^{x_k}|\le j r^{2m}.
\end{equation}
In particular, up to subsequences, $p_{2m}^{x_k} \to p_x$ uniformly for some $2m$-homogeneous polynomial $p_x$, $a$-harmonic, such that $p_x(x', 0)\ge 0$, and
\begin{equation}
\label{eq.boundholds}
j^{-1} r^{2m} \le \sup_{\de B_r} |p_x|\le j r^{2m}.
\end{equation}
Notice that both bounds \eqref{eq.bound} are crucial: the bound from above allows a convergence, and the bound from below avoid getting as a limit the zero polynomial. We similarly have that $p_{2m}^{z_k}\to p_z$ for some $p_z$ $2m$-homogeneous polynomial, $a$-harmonic, with $p_z(x', 0) \ge 0$ and such that \eqref{eq.boundholds} holds for $p_z$.

Combining the convergences of $p^{x_k}_{2m}$ and $p_{2m}^{z_k}$ to $p_x$ and $p_z$ with \eqref{eq.vx}-\eqref{eq.vz} we obtain that
\[
v_x^k \to p_x,\quad v_z^k \to p_z(\cdot -\xi_\circ), \quad\textrm{uniformly},
\]
for some $\xi_\circ = (\xi'_\circ, 0) \in \mathbb{S}^{n}$. On the other hand, from \eqref{eq.difo1}, we know that $p_x \ge p_z(\cdot - \xi_\circ)$.

Thus, $p_x - p_z(\cdot - \xi_\circ)\ge 0$, and is $a$-harmonic, therefore by Lioville's theorem is constant. Moreover, both terms are non-negative on the thin space, and both attain the value 0 (since they are homogeneous), therefore, $p_x = p_z(\cdot - \xi_\circ)$. Since both $p_x$ and $p_z$ are homogeneous of the same degree, this implies that $p_x = p_z$.

Let us now use the Monneau-type monotonicity formula, \eqref{eq.Monneau1}-\eqref{eq.Monneau2}, with polynomials $p_x$ and $p_z$:
\begin{align*}
\int_{\de B_1} (v_x^k - p_x)^2|x_{n+1}|^a & = \mathcal{M}_m(\rho_k, \bar u_{\lambda(x_k)}^{x_k}, p_x) \\
& \ge \mathcal{M}_m(0^+, \bar u_{\lambda(x_k)}^{x_k}, p_x) -C_M \rho_k^\alpha\\
& = \int_{\de B_1} (p_{2m}^{x_k} - p_x)^2|x_{n+1}|^a - C_M \rho_k^\alpha,
\end{align*}
where we are using that $\rho^{-2m} \bar u_{\lambda(x_k)} (\rho x) \to p_{2m}^{x_k}$ as $\rho \downarrow 0$. Letting $k\to \infty$ (so $\rho_k \downarrow 0$), since $v_x^k \to p_x$ we get that
\[
\int_{\de B_1} (p_{2m}^{x_k} - p_x)^2|x_{n+1}|^a \to 0.
\]

On the other hand, proceeding analogously,
\[
\int_{\de B_1} (v_z^k(\cdot + \xi_k) - p_z)^2|x_{n+1}|^a \ge  \int_{\de B_1} (p_{2m}^{z_k} - p_z)^2|x_{n+1}|^a - C_M \rho_k^\alpha,
\]
and since $v_z^k \to p_z(\cdot - \xi_\circ)$,
\[
\int_{\de B_1} (p_{2m}^{z_k} - p_z)^2|x_{n+1}|^a \to 0.
\]
Thus, since $p_x = p_z$, we obtain that
\[
\int_{\de B_1} (p_{2m}^{z_k} - p_{2m}^{x_k})^2|x_{n+1}|^a \to 0,
\]
a contradiction with \eqref{eq.contM}.
\end{proof}

Finally, we prove the following.

\begin{prop}
\label{prop.singdim}
Let $m\in \N$, and suppose $2m < \tau + \alpha$. Then, $ \Gamma_{2m}$ is contained in a countable union of $(n-1)$-dimensional $C^1$ manifolds.
\end{prop}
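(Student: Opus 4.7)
I plan to prove Proposition~\ref{prop.singdim} via a classical Whitney extension and implicit function theorem scheme, in the spirit of Caffarelli's original treatment of the singular set for the classical obstacle problem and its adaptation to the thin obstacle problem in \cite{GP09}.

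First I reduce to the compact strata: since $\Gamma_{2m} = \bigcup_{j \in \N} \Gamma_{2m,j}$, it suffices to cover each $\Gamma_{2m,j}$ by countably many $(n-1)$-dimensional $C^1$ manifolds, so I fix $j$. For each $x_\circ \in \Gamma_{2m,j}$, the restriction of the blow-up polynomial $p_{2m}^{x_\circ}$ to the thin space is a nonzero $2m$-homogeneous polynomial in $x' \in \R^n$ (non-triviality follows from the lower non-degeneracy bound $j^{-1} r^{2m} \le \sup_{\de B_r} |p_{2m}^{x_\circ}|$). I then define a jet field on $\Gamma_{2m,j}$ by
\[
f_\beta(x_\circ) := \de^\beta p_{2m}^{x_\circ}(0) \ \text{ for } |\beta| = 2m, \qquad f_\beta(x_\circ) := 0 \ \text{ for } |\beta| < 2m.
\]

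The core of the proof is the verification of the Whitney $C^{2m}$ compatibility condition for this jet. For $|\alpha| = 2m$ the condition reduces exactly to the continuity of $x_\circ \mapsto f_\beta(x_\circ)$, which is precisely Proposition~\ref{prop.whitapp}. For $|\alpha| < 2m$ one must show that the remainders $D^\alpha p_{2m}^{z_\circ}(x_\circ - z_\circ)$, which are a priori only $O(|x_\circ - z_\circ|^{2m - |\alpha|})$, are in fact $o$ of this quantity. This is where one uses a crucial observation already implicit in the proof of Proposition~\ref{prop.whitapp}: if $x_\circ, z_\circ \in \Gamma_{2m,j}$ approach each other, any limit direction $\xi$ of $(x_\circ - z_\circ)/|x_\circ - z_\circ|$ lies in the zero set of $p_{2m}^{z_\circ}(\cdot, 0)$, since the rescaled functions $v_z^k$ in that proof contain a contact point converging to $-\xi$. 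Because $p_{2m}^{z_\circ}(\cdot, 0)$ is nonnegative and vanishes at $\xi$, its gradient along the thin space vanishes there as well, and a stratification of $\Gamma_{2m,j}$ by the dimension of the spine of $p_{2m}^{x_\circ}$ (its largest translation-invariant subspace) propagates this vanishing to all tangential higher-order derivatives on each stratum, closing the $o$-condition.

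Granted the compatibility, Whitney's theorem yields $F \in C^{2m}(\R^n)$ with $D^\beta F|_{\Gamma_{2m,j}} = f_\beta$; in particular $D^\beta F \equiv 0$ on $\Gamma_{2m,j}$ for $|\beta| \le 2m-1$, while at every $x_\circ \in \Gamma_{2m,j}$ some multi-index $\beta$ with $|\beta| = 2m$ satisfies $\de^\beta F(x_\circ) = f_\beta(x_\circ) \neq 0$. Writing $\beta = \gamma + e_i$ and setting $G := \de^\gamma F \in C^1(\R^n)$, we have $G \equiv 0$ on $\Gamma_{2m,j}$ and $\de_i G(x_\circ) \neq 0$, so the implicit function theorem provides a neighborhood of $x_\circ$ in which $\Gamma_{2m,j}$ is contained in an $(n-1)$-dimensional $C^1$ manifold. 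Partitioning $\Gamma_{2m,j}$ according to which $2m$-th order derivative is nonzero (finitely many choices) and taking the union over $j$ gives the stated covering.

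The main obstacle is the sharp $o$-form of Whitney compatibility for $|\alpha| < 2m$, together with the accompanying spine stratification; verifying these requires carefully bringing in the tangential-approach geometry of pairs of singular points that is only implicit in the proof of Proposition~\ref{prop.whitapp}, rather than a stand-alone input.
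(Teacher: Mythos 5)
Your proposal is correct and follows essentially the same route as the paper's proof: reduction to the compact strata $\Gamma_{2m,j}$, Whitney's $C^{2m}$ extension of the jet $f_\beta$ (with the compatibility conditions supplied by Propositions~\ref{prop.uniconv} and~\ref{prop.whitapp}; the paper defers this verification to \cite[Lemma 1.5.6]{GP09}), and the implicit function theorem applied to a $(2m-1)$-th order derivative of the extension at a point where $\nabla_{x'}p_{2m}^{x_\circ}$ does not vanish identically on the thin space. The only remark is that your spine stratification is superfluous: the Monneau-type compactness argument in the proof of Proposition~\ref{prop.whitapp} already yields $p_x=p_z(\cdot-\xi_\circ)=p_z$, i.e.\ translation invariance of the limit polynomial in the limit direction $\xi_\circ$, which gives $D^\alpha p_z(\xi_\circ)=D^\alpha p_z(0)=0$ for every $|\alpha|<2m$ and thus closes the $o$-form of the compatibility condition directly.
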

\begin{proof}
The proof is now standard, and it follows applying the Whitney extension theorem, which can be applied thanks to Proposition~\ref{prop.whitapp}. We refer the reader to the proof of \cite[Theorem 1.3.8]{GP09}, which we summarise here for completeness.

Indeed, if $x_\circ\in \Gamma_{2m}$, and $\beta = (\beta_1,\dots, \beta_{n+1})$ is a multi-index, we denote
\[
p^{x_\circ}_{2m}(x) = \sum_{|\beta|=2m} \frac{a_\beta(x_\circ)}{\beta!} x^\beta
\]
so that $a(x_\circ)$ (the coefficients) are continuous on $\Gamma_{2m, j}$ by Proposition~\ref{prop.whitapp}. Arguing as in \cite[Lemma 1.5.6]{GP09} (by means of Proposition~\ref{prop.uniconv}) the function $f_\beta$ defined for the multi-index $\beta$, with $|\beta|\le 2m$,
\[
f_\beta(x) = \left\{
\begin{array}{ll}
0 & \textrm{if }|\beta|< 2m,\\
a_\beta(x) & \textrm{if }|\beta| = 2m,
\end{array}
\right.
\]
for $x\in \Gamma_{2m}$, fulfils the compatibility conditions to apply Whitney's extension theorem on $\Gamma_{2m, j}$. That is, there exists some $F\in C^{2m}(\R^{n+1})$ such that
\[
\frac{d^{|\beta|}}{dx^\beta} F  = f_\beta\quad\textrm{ on }\quad \Gamma_{2m, j},
\]
for any $|\beta|\le 2m$.

Now, for any $x_\circ\in \Gamma_{2m, j}$, since $p_{2m}^{x_\circ}\neq 0$, there exists some $\nu \in \R^n$ such that
\[
\nu \cdot \nabla_{x'} p_{2m}^{x_\circ}(x', 0) \neq 0\quad\textrm{ on }\quad \R^n.
\]
In particular, for some multi-index $\beta_\circ$ with $|\beta_\circ| = 2m-1$,
\begin{equation}
\label{eq.Feq}
\nu \cdot \nabla_{x'} \de^{\beta_\circ} F(x_\circ) = \nu \cdot \nabla_{x'} \de^{\beta_\circ} p_{2m}^{x_\circ} (0) \neq 0,
\end{equation}
where $\de^{\beta_\circ} := \frac{d^{|\beta_\circ|}}{dx^{\beta_\circ}}$. On the other hand,
\[
\Gamma_{2m, j}\subset \bigcap_{|\beta| = 2m-1} \{\de^\beta F = 0\} \subset \{\de^{\beta_\circ} F = 0\},
\]
so that, thanks to \eqref{eq.Feq}, by the implicit function theorem $\Gamma_{2m, j}$ is locally contained in a $(n-1)$-dimensional $C^1$ manifold. Thus, $\Gamma_{2m}$ is contained in a countable union of $(n-1)$-dimensional $C^1$ manifolds.
\end{proof}

\section{Proof of main results}
\label{sec.mainresults}
Finally, in this section we prove the main results. To do so, the starting point is the following GMT lemma from \cite{FRS19}.

\begin{lem}[\cite{FRS19}]
\label{lem.CL}
Consider the family $\{E_\lambda\}_{\lambda\in [0,1]}$ with $E_\lambda\subset \R^n$. and let us denote $\R^n \supset E := \bigcup_{\lambda\in [0, 1]} E_\lambda$.

Suppose that for some $\beta \in (0, n]$ and $\gamma\ge 1$, we have
\begin{itemize}
\item $\dim_{\mathcal{H}} E \le \beta$,
\item for any $\eps > 0$, and for any $x_\circ\in E_{\lambda_\circ}$ for some $\lambda_\circ\in [0, 1]$, there exists some $\rho = \rho(\eps, x_\circ, \lambda_\circ) > 0$ such that
\[
B_r(x_\circ) \cap E_\lambda = \varnothing \quad \textrm{for all}\quad r < \rho, \textrm{ and } \lambda > \lambda_\circ +r^{\gamma-\eps}.
\]
\end{itemize}
Then,
\begin{enumerate}
\item If $\beta <\gamma$, then $\dim_{\mathcal{H}} ( \{\lambda : E_\lambda\neq \varnothing\} ) \le \beta/\gamma < 1$.
\item If $\beta \ge\gamma$, then for $\mathcal{H}^1$-a.e. $\lambda\in \R $, we have $\dim_{\mathcal{H}}(E_\lambda)\le \beta - \gamma$.
\end{enumerate}
\end{lem}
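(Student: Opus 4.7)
\medskip

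\noindent\textbf{Proof proposal.} The plan is to extract from the hypothesis a locally $(\gamma-\eps)$-Hölder parametrization $\Lambda: E \to [0,1]$, and then to derive both conclusions from standard Hausdorff-dimension facts for Hölder maps. First, I observe that the hypothesis unambiguously defines $\Lambda(x) := \lambda_\circ$ whenever $x \in E_{\lambda_\circ}$: if $x_\circ$ belonged simultaneously to $E_{\lambda_\circ}$ and $E_\mu$ with $\mu > \lambda_\circ$, picking $r < \rho(\eps, x_\circ, \lambda_\circ)$ so small that $r^{\gamma-\eps} < \mu - \lambda_\circ$ would yield the contradiction $x_\circ \in B_r(x_\circ) \cap E_\mu = \varnothing$. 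Second, for every fixed $\eps > 0$, I stratify $E = \bigcup_{k \ge 1} E^{(k)}_\eps$ with $E^{(k)}_\eps := \{x \in E : \rho(\eps, x, \Lambda(x)) \ge 1/k\}$; on each layer, $\Lambda$ is \emph{uniformly} $(\gamma-\eps)$-Hölder at scales $r < 1/(2k)$, since for $x_1, x_2 \in E^{(k)}_\eps$ with $r := |x_1-x_2| < 1/(2k)$ and $\Lambda(x_2) \ge \Lambda(x_1)$, the hypothesis at $x_1$ forces $\Lambda(x_2) - \Lambda(x_1) \le r^{\gamma-\eps}$.

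For part (1), the standard fact that an $\alpha$-Hölder map sends a set of Hausdorff dimension $d$ into a set of dimension at most $d/\alpha$ applies layer by layer: $\dim_{\mathcal{H}}(\Lambda(E^{(k)}_\eps)) \le \beta/(\gamma - \eps)$. Taking the countable union in $k$ and then sending $\eps \downarrow 0$ yields $\dim_{\mathcal{H}}\{\lambda : E_\lambda \ne \varnothing\} = \dim_{\mathcal{H}}(\Lambda(E)) \le \beta/\gamma$, which is $< 1$ precisely when $\beta < \gamma$.

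For part (2), fix $\tilde\beta > \beta$, $\eps > 0$, and $k$. Since $\dim_{\mathcal{H}}(E^{(k)}_\eps) \le \beta < \tilde\beta$, for every $\delta > 0$ I can cover $E^{(k)}_\eps$ by countably many balls $B_{r_i}(x_i)$ with $r_i < \min\{\delta, 1/(2k)\}$ and $\sum_i r_i^{\tilde\beta} < \delta$. By the uniform Hölder bound on $E^{(k)}_\eps$, the set $\Lambda(E^{(k)}_\eps \cap B_{r_i}(x_i))$ is contained in an interval $I_i \subset \R$ with $|I_i| \le C r_i^{\gamma-\eps}$, so the slice $E_\lambda \cap E^{(k)}_\eps$ is covered by the subfamily $\{B_{r_i}(x_i) : \lambda \in I_i\}$. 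Integrating the resulting majorant of the Hausdorff $(\tilde\beta - \gamma + \eps)$-content gives
\begin{equation*}
\int_{\R} \mathcal{H}^{\tilde\beta - \gamma + \eps}_\infty(E_\lambda \cap E^{(k)}_\eps)\, d\lambda \le \sum_i r_i^{\tilde\beta - \gamma + \eps}\,|I_i| \le C \sum_i r_i^{\tilde\beta} < C\delta.
\end{equation*}
Sending $\delta \to 0$ along a sequence and applying Fatou's lemma, $\mathcal{H}^{\tilde\beta - \gamma + \eps}_\infty(E_\lambda \cap E^{(k)}_\eps) = 0$ for $\mathcal{H}^1$-a.e. $\lambda$, so $\dim_{\mathcal{H}}(E_\lambda \cap E^{(k)}_\eps) \le \tilde\beta - \gamma + \eps$. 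Taking countable unions over $k$, and then sending $\tilde\beta \downarrow \beta$ and $\eps \downarrow 0$ along sequences, we obtain $\dim_{\mathcal{H}}(E_\lambda) \le \beta - \gamma$ for a.e. $\lambda$.

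The main obstacle I expect is pure bookkeeping: organizing the stratification $\{E^{(k)}_\eps\}$ so that it interacts cleanly with the Hausdorff coverings, ensuring the measurable-majorant version of the integrated inequality, and executing the $\delta \to 0$ limit along a countable sequence so that Fatou's lemma is applicable. No special structure of the thin-obstacle problem enters, consistently with the purely measure-theoretic nature of the lemma.
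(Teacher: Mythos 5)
Your proof is correct and follows essentially the same covering strategy as the original argument in \cite{FRS19} (the paper itself does not reprove Lemma~\ref{lem.CL}, and its proof of the Minkowski analogue, Lemma~\ref{lem.CL2}, rests on the same idea): stratify by the size of $\rho$ so that each covering ball of radius $r_i$ can meet $E_\lambda$ only for $\lambda$ in an interval of length $\lesssim r_i^{\gamma-\eps}$, then bound the content of the image of the (well-defined, locally H\"older) map $x\mapsto\lambda(x)$ for part (1), and integrate the Hausdorff content of the slices in $\lambda$ for part (2). The measurability and limiting steps you flag are handled exactly as you indicate (a measurable majorant given by a countable sum of indicators, plus a countable sequence $\delta_j\downarrow 0$), so there is no gap.
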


We will also use the following lemma, analogous to the first part of Lemma~\ref{lem.CL} but dealing with the upper Minkowski dimension instead (which we denote $\overline{\dim}_{\mathcal{M}}$). We refer to \cite[Chapter 5]{Mat95} for more details on the upper/lower Minkowski content and dimension.

\begin{lem}
\label{lem.CL2}
Consider the family $\{E_\lambda\}_{\lambda\in [0,1]}$ with $E_\lambda\subset \R^n$. and let us denote $\R^n \supset E := \bigcup_{\lambda\in [0, 1]} E_\lambda$.

Suppose that for some $\beta \in [1, n]$ and $\gamma>\beta$, we have
\begin{itemize}
\item $\overline{\dim}_{\mathcal{M}} E \le \beta$,
\item for any $\eps > 0$, and for any $x_\circ\in E_{\lambda_\circ}$ for some $\lambda_\circ\in [0, 1]$, there exists some $\rho = \rho(\eps) > 0$ such that
\[
B_r(x_\circ) \cap E_\lambda = \varnothing \quad \textrm{for all}\quad r < \rho, \textrm{ and } \lambda > \lambda_\circ +r^{\gamma-\eps}.
\]
\end{itemize}
Then, $\overline{\dim}_{\mathcal{M}} ( \{\lambda : E_\lambda\neq \varnothing\} ) \le \beta/\gamma < 1$.
\end{lem}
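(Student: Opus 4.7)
The plan is to transfer the upper Minkowski-dimension estimate on $E$ to one on $F := \{\lambda \in [0,1] : E_\lambda \neq \varnothing\}$ via a simple point-selection map. Fix $\eps > 0$ small and, for each $\lambda \in F$, pick a representative $x_\lambda \in E_\lambda$; this defines $\Phi : F \to E$, $\lambda \mapsto x_\lambda$, which I will use to pull $r$-separated subsets of $E$ back to $r^{\gamma-\eps}$-separated subsets of $F$.

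The key observation is the following lifting of separation. Suppose $\lambda_1 < \lambda_2$ both lie in $F$ and satisfy $\lambda_2 - \lambda_1 \ge r^{\gamma-\eps}$ for some $r < \rho(\eps)$. Applying the disjointness hypothesis at the point $x_{\lambda_1} \in E_{\lambda_1}$ with parameter $\lambda_2$ yields $B_r(x_{\lambda_1}) \cap E_{\lambda_2} = \varnothing$, hence $\Phi(\lambda_2) = x_{\lambda_2} \notin B_r(x_{\lambda_1})$, so $|\Phi(\lambda_1) - \Phi(\lambda_2)| \ge r$. Writing $\widetilde N(A, t)$ for the largest number of pairwise $t$-separated points in $A$, this proves
\[
\widetilde N\bigl(F,\, r^{\gamma-\eps}\bigr) \,\le\, \widetilde N(E, r) \qquad \text{for every } r < \rho(\eps).
\]

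Next I would invoke the hypothesis $\overline{\dim}_{\mathcal{M}} E \le \beta$: for any fixed $\beta' > \beta$ there exists $r_0 > 0$ such that $\widetilde N(E, r) \le r^{-\beta'}$ whenever $0 < r < r_0$. Substituting $\delta := r^{\gamma-\eps}$ in the previous display gives $\widetilde N(F, \delta) \le \delta^{-\beta'/(\gamma-\eps)}$ for all sufficiently small $\delta$, and therefore $\overline{\dim}_{\mathcal{M}} F \le \beta'/(\gamma-\eps)$. Since $\beta' > \beta$ and $\eps > 0$ are arbitrary, letting $\beta' \downarrow \beta$ and $\eps \downarrow 0$ delivers $\overline{\dim}_{\mathcal{M}} F \le \beta/\gamma$, which is strictly less than $1$ by the assumption $\gamma > \beta$.

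The only genuinely delicate point — and the reason this lemma requires a strengthened hypothesis relative to Lemma~\ref{lem.CL} — is that here $\rho(\eps)$ depends only on $\eps$ and not on the base point $(x_\circ, \lambda_\circ)$. This uniformity is exactly what allows a \emph{single} scale $r$ to witness the disjointness condition for \emph{all} pairs $(\lambda_1, \lambda_2)$ simultaneously; without it, the separation argument would only yield a countable union of Hausdorff-small pieces, which is strictly weaker than a Minkowski bound. Apart from this structural observation, the remainder is straightforward bookkeeping of the two parameters $\eps$ and $\beta' - \beta$.
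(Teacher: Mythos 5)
Your proof is correct and is essentially the dual of the paper's argument: the paper pushes forward a cover of $E$ by $N(E,r)$ balls of radius $r$ (centered in $E$) to a cover of $\{\lambda : E_\lambda\neq\varnothing\}$ by intervals of length $\sim r^{\gamma-\eps}$, while you pull back $r^{\gamma-\eps}$-separated subsets of the $\lambda$-set to $r$-separated subsets of $E$; since packing and covering numbers are comparable, this is the same single-scale counting argument, resting on exactly the same uniformity of $\rho(\eps)$ that you correctly identify as the crucial strengthening over Lemma~\ref{lem.CL}. The only point to tidy is the mismatch between the strict inequality $\lambda_2>\lambda_1+r^{\gamma-\eps}$ demanded by the hypothesis and the non-strict separation $\lambda_2-\lambda_1\ge r^{\gamma-\eps}$ you assume, which is resolved by running the argument with a slightly smaller radius.
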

\begin{proof}
Given $A \subset \R^n$, let us denote 
\begin{equation}
\label{eq.NAr}
N(A, r) := \min\left\{k : A\subset \cup_{i = 1}^k B_r(x_i) \quad\textrm{ for some $x_i \in \R^n$}\right\},
\end{equation}
the smallest number of $r$-balls needed to cover $A$. The upper Minkowski dimension of $A$ can then be defined as
\[
\overline{\dim}_{\mathcal{M}} A := \inf\bigg\{ s :  \limsup_{r\downarrow 0} N(A, r) r^s = 0\bigg\}
\]
(see \cite{Mat95}). Notice that the definition of upper Minkowski dimension does not change if we assume that the balls $B_r(x_i)$ from \eqref{eq.NAr} are centered at points in $A$ (by taking, for instance, balls with twice the radius). 

Since $\overline{\dim}_{\mathcal{M}} E \le \beta$, we have that for any $\delta > 0$, $N(E, r) = o(r^{\beta+\delta})$. Let us consider $N(E, r)$ balls of radius $r$ centered at $E$, $B_r(x_i)$, with $x_i\in E$. Thanks to our second hypothesis we have that 
\[
\bigcup_{\lambda\in [0, 1]}\{\lambda\}\times E_\lambda \subset \bigcup_{i = 1}^{N(E, r)} (\lambda(x_i) -r^{\gamma-\eps}, \lambda(x_i) +r^{\gamma-\eps})\times B_r(x_i),
\] 
where $x_i\in E_{\lambda(x_i)}$. Thus, 
\[
\{\lambda \in [0, 1]: E_\lambda \neq \varnothing \}\subset \bigcup_{i = 1}^{N(E, r)}(\lambda(x_i) -r^{\gamma-\eps}, \lambda(x_i) +r^{\gamma-\eps}),
\]
where the intervals are balls of radius $r^{\gamma-\eps}$. In particular, using that $N(E, r) = o(r^{\beta+\delta})$, we deduce that 
\[
\overline{\dim}_{\mathcal{M}}\left\{\lambda \in [0, 1]: E_\lambda \neq \varnothing \right\}\le \frac{\beta+\delta}{\gamma-\eps}.
\]
Since this works for any $\delta, \eps > 0$, we deduce the desired result. 
\end{proof}

\begin{rem}
Notice that Lemma~\ref{lem.CL} is somehow a generalization of the coarea formula. Namely, if we consider the case $\gamma = 1$, $\beta = n$, and $\eps = 0$, and we denote $E_\lambda$ the level sets of a Lipschitz function $f = f(\lambda)$ ($E_\lambda = f^{-1}(\lambda)$), the the coarea formula says that 
\[
\int_0^1 \mathcal{H}^{n-1}\left(f^{-1}(\lambda)\right)\, d\lambda = \int_{B_1} |\nabla f| < \infty,
\]
since $f$ is Lipschitz by assumption. In particular, $\mathcal{H}^{n-1}\left(f^{-1}(\lambda)\right) < \infty$ for $\mathcal{H}^1$-a.e. $\lambda\in [0, 1]$. This is used by Monneau in \cite{Mon03} for the classical obstacle problem.

This observation is also the reason why we do not expect to have a Minkowski analogous to Lemma~\ref{lem.CL} (2), as we did in Lemma~\ref{lem.CL2} for part (1).
\end{rem}

By applying the previous lemmas together with Proposition~\ref{prop.kap} we obtain the following result.

\begin{thm}
\label{thm.main000}
Let $u_\lambda$ solve \eqref{eq.thinobst_lam_vp}-\eqref{eq.uassump}. Let $\varphi\in C^{\tau, \alpha}$, and let $\kappa < \tau + \alpha$ and $\kappa \le \tau + \alpha -a$.

If $2+2s\le \kappa \le n+2s$, then,
\[
{\rm dim}_{\mathcal{H}}(\Gamma^\lambda_{\ge \kappa})\le n-\kappa+2s \quad\textrm{for a.e.}\quad \lambda\in [0, 1],
\]
On the other hand, if $\kappa > n+2s$, then
\[
\Gamma^\lambda_{\ge \kappa} = \varnothing \quad\textrm{for all}\quad \lambda\in [0, 1]\setminus \mathcal{E}_\kappa,
\]
where $\mathcal{E}_\kappa \subset [0, 1]$ is such that $\dim_{\mathcal{H}}(\mathcal{E}_\kappa) \le \frac{n}{\kappa-2s}$.

Furthermore, for any $h > 0$, if $\kappa > n+2s$, then 
\[
\Gamma^\lambda_{\ge \kappa}\cap B_{1-h} = \varnothing \quad\textrm{for all}\quad \lambda\in [0, 1]\setminus \mathcal{E}_{\kappa, h},
\]
where $\mathcal{E}_{\kappa, h} \subset [0, 1]$ is such that $\overline{\dim}_{\mathcal{M}}(\mathcal{E}_{\kappa, h}) \le \frac{n}{\kappa-2s}$.
\end{thm}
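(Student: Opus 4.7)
The plan is to derive Theorem~\ref{thm.main000} as an essentially direct application of the two GMT lemmas (Lemma~\ref{lem.CL} and Lemma~\ref{lem.CL2}), with Proposition~\ref{prop.kap} supplying the required geometric input. Concretely, I fix $h>0$, put $E_\lambda := \Gamma^\lambda_{\ge \kappa}\cap B_{1-h}$ and $E:=\bigcup_{\lambda\in[0,1]} E_\lambda$. Identifying the thin space with $\R^n$, $E$ is a bounded subset of $\R^n$, so automatically $\dim_{\mathcal{H}}(E)\le n$ and $\overline{\dim}_{\mathcal{M}}(E)\le n$; this fixes the GMT parameter $\beta = n$.

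The first step is to verify the separation hypothesis of the two lemmas with the exponent $\gamma = \kappa-2s$. Proposition~\ref{prop.kap} applies precisely in the range $\kappa\le \tau+\alpha-a$, $\kappa<\tau+\alpha$ of the statement and tells us that whenever $x_\circ\in E_{\lambda_\circ}$ and $r<h/4$, then $u_{\lambda_\circ+C_* r^{\kappa-2s}}>\varphi$ in $B_r'(x_\circ')$, so that $B_r(x_\circ)\cap E_\mu = \varnothing$ for every $\mu>\lambda_\circ+C_* r^{\kappa-2s}$. Given $\eps>0$, I choose $\rho=\rho(\eps,h)>0$ so small that $r^{-\eps}\ge C_*$ for all $r<\rho$; this converts the previous statement into exactly the form required by the lemmas. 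A crucial point is that $C_*$ depends only on $n,s,M,\kappa,\tau,\alpha,h$ and not on $(x_\circ,\lambda_\circ)$, so $\rho$ can be taken independent of the basepoint, which is what Lemma~\ref{lem.CL2} demands.

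With the hypothesis in hand, the argument splits according to the two regimes. In the range $2+2s\le \kappa\le n+2s$ one has $\beta=n\ge \kappa-2s=\gamma$, so Lemma~\ref{lem.CL}(2) yields $\dim_{\mathcal{H}}(E_\lambda)\le n-\kappa+2s$ for a.e. $\lambda\in[0,1]$; a countable union of null exceptional sets over $h=1/k$ then removes the localisation and gives $\dim_{\mathcal{H}}(\Gamma^\lambda_{\ge \kappa})\le n-\kappa+2s$ for a.e. $\lambda$. In the range $\kappa>n+2s$ one has $\beta<\gamma$, so Lemma~\ref{lem.CL}(1) yields $\dim_{\mathcal{H}}\{\lambda:E_\lambda\neq\varnothing\}\le n/(\kappa-2s)$; defining $\mathcal{E}_\kappa$ as the union over $k\in\N$ of the corresponding exceptional sets at $h=1/k$ and using countable stability of Hausdorff dimension gives $\dim_{\mathcal{H}}(\mathcal{E}_\kappa)\le n/(\kappa-2s)$, while $\lambda\notin \mathcal{E}_\kappa$ forces $\Gamma^\lambda_{\ge \kappa}\cap B_{1-1/k}=\varnothing$ for every $k$, i.e. $\Gamma^\lambda_{\ge \kappa}=\varnothing$. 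For the Minkowski refinement (still in the range $\kappa>n+2s$) I apply Lemma~\ref{lem.CL2}, whose hypotheses $\beta\ge 1$ and $\gamma>\beta$ are satisfied, directly at fixed $h>0$, obtaining $\overline{\dim}_{\mathcal{M}}(\mathcal{E}_{\kappa,h})\le n/(\kappa-2s)$ with $\mathcal{E}_{\kappa,h}=\{\lambda:\Gamma^\lambda_{\ge \kappa}\cap B_{1-h}\neq \varnothing\}$; no union over $h$ is needed here since the statement is already local.

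The main conceptual content has been absorbed into Proposition~\ref{prop.kap} and the GMT lemmas, so no genuine obstacle remains at this stage. The only point that requires some care is the uniformity of $\rho(\eps)$ in the basepoint when applying the Minkowski lemma; this is precisely why Proposition~\ref{prop.kap} is phrased with a constant $C_*$ independent of $x_\circ$ and $\lambda_\circ$, and why the restriction to $B_{1-h}$ is necessary to absorb the localisation constraint $r<h/4$.
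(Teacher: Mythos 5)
Your proposal is correct and follows essentially the same route as the paper: take $E_\lambda=\Gamma^\lambda_{\ge\kappa}$, $\beta=n$, $\gamma=\kappa-2s$, feed Proposition~\ref{prop.kap} into Lemma~\ref{lem.CL} for the Hausdorff statements and into Lemma~\ref{lem.CL2} for the Minkowski one. The only (harmless) difference is bookkeeping: you localise to $B_{1-h}$ throughout and then take a countable union over $h=1/k$, whereas the paper works with the full sets directly for the Hausdorff parts, exploiting that Lemma~\ref{lem.CL} allows $\rho$ to depend on the basepoint.
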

\begin{proof}
The proof of this result follows applying Lemmas~\ref{lem.CL} and \ref{lem.CL2} to the right sets. Indeed, we consider the sets
\[
E_\lambda := \Gamma_{\ge \kappa}^\lambda,\qquad E := \bigcup_{\lambda\in [0, 1]}E_\lambda.
\]
Notice that $E = \Gamma_{\ge \kappa}$, and we can take $\beta = n$ in Lemma~\ref{lem.CL}. On the other hand, we know that for any $\lambda_\circ \in [0, 1]$, $x_\circ \in E_{\lambda_\circ}$, there exists $\rho = \rho(x_\circ, \lambda_\circ) > 0$ such that
\[
B_r(x_\circ) \cap E_\lambda = \varnothing \quad \textrm{for all}\quad r < \rho, \textrm{ and } \lambda > \lambda_\circ +C_* r^{\kappa-2s}.
\]
thanks to Proposition~\ref{prop.kap}. That is, for any $\eps > 0$ there exists some $\rho = \rho(\eps, x_\circ, \lambda_\circ) > 0$ such that
\[
B_r(x_\circ) \cap E_\lambda = \varnothing \quad \textrm{for all}\quad r < \rho, \textrm{ and } \lambda > \lambda_\circ +r^{\kappa-2s-\eps}.
\]
and the hypotheses of Lemma~\ref{lem.CL} are fulfilled, with $\beta = n$ and $\gamma = \kappa-2s$. The result now follows by Lemma~\ref{lem.CL}.

The last part of the theorem follows by applying Lemma~\ref{lem.CL2} instead of Lemma~\ref{lem.CL}. We notice in this case that the dependence of $\rho$ on the point has been removed, but now it depends on $h> 0$. This forces the result to hold only in smaller balls $B_{1-h}$. 
\end{proof}

In particular, we can also deal with the set of free boundary points of infinite order.

\begin{cor}
\label{cor.main000}
Let $u_\lambda$ solve \eqref{eq.thinobst_lam_vp}-\eqref{eq.uassump}. Let $\varphi\in C^{\infty}$, and let $\Gamma_{\infty}^\lambda := \bigcap_{\kappa \ge 2} \Gamma_{\ge\kappa}^\lambda$. Then,
\[
\Gamma^\lambda_{\infty} = \varnothing \quad\textrm{for all}\quad \lambda\in [0, 1]\setminus \mathcal{E},
\]
where $\mathcal{E} \subset [0, 1]$ is such that $\dim_{\mathcal{H}}(\mathcal{E}) = 0$.

Furthermore, for any $h > 0$,
\[
\Gamma^\lambda_{\infty}\cap B_{1-h} = \varnothing \quad\textrm{for all}\quad \lambda\in [0, 1]\setminus \mathcal{E}_h,
\]
where $\mathcal{E}_h \subset [0, 1]$ is such that ${\dim}_{\mathcal{M}}(\mathcal{E}) = 0$.
\end{cor}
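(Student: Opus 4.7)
The plan is that this is a direct consequence of Theorem~\ref{thm.main000} applied along a sequence $\kappa_j \to \infty$, combined with the monotonicity of both Hausdorff and (upper) Minkowski dimensions under set inclusion.

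Concretely, for each integer $j \ge 1$ set $\kappa_j := n + 2s + j$, so that $\kappa_j > n+2s$. Since $\varphi \in C^\infty$, we may invoke Theorem~\ref{thm.main000} with any such $\kappa_j$ (the regularity hypothesis $\kappa < \tau + \alpha$ and $\kappa \le \tau + \alpha - a$ can be met by choosing $\tau$ arbitrarily large). This yields exceptional sets $\mathcal{E}_{\kappa_j} \subset [0,1]$ and $\mathcal{E}_{\kappa_j, h} \subset [0,1]$ with
\[
\dim_{\mathcal{H}}(\mathcal{E}_{\kappa_j}) \le \frac{n}{\kappa_j - 2s} = \frac{n}{n+j}, \qquad \overline{\dim}_{\mathcal{M}}(\mathcal{E}_{\kappa_j,h}) \le \frac{n}{n+j},
\]
and with the property that $\Gamma^\lambda_{\ge \kappa_j} = \varnothing$ for every $\lambda \in [0,1] \setminus \mathcal{E}_{\kappa_j}$, and $\Gamma^\lambda_{\ge \kappa_j} \cap B_{1-h} = \varnothing$ for every $\lambda \in [0,1] \setminus \mathcal{E}_{\kappa_j,h}$.

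Now define
\[
\mathcal{E} := \bigcap_{j=1}^{\infty} \mathcal{E}_{\kappa_j}, \qquad \mathcal{E}_h := \bigcap_{j=1}^{\infty} \mathcal{E}_{\kappa_j, h}.
\]
If $\lambda \in [0,1] \setminus \mathcal{E}$, then $\lambda \notin \mathcal{E}_{\kappa_j}$ for some $j$, so $\Gamma^\lambda_{\ge \kappa_j} = \varnothing$, and since $\Gamma^\lambda_\infty \subset \Gamma^\lambda_{\ge \kappa_j}$ by the very definition \eqref{eq.Ginf}, we conclude $\Gamma^\lambda_\infty = \varnothing$. The same argument with $\mathcal{E}_h$ yields $\Gamma^\lambda_\infty \cap B_{1-h} = \varnothing$ for $\lambda \in [0,1] \setminus \mathcal{E}_h$.

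It remains to bound the dimensions of $\mathcal{E}$ and $\mathcal{E}_h$. Since $\mathcal{E} \subset \mathcal{E}_{\kappa_j}$ for every $j$, monotonicity of Hausdorff dimension gives $\dim_{\mathcal{H}}(\mathcal{E}) \le n/(n+j)$ for all $j$, hence $\dim_{\mathcal{H}}(\mathcal{E}) = 0$. Analogously, $\mathcal{E}_h \subset \mathcal{E}_{\kappa_j, h}$ and the monotonicity of upper Minkowski dimension under inclusion (immediate from its definition via coverings) gives $\overline{\dim}_{\mathcal{M}}(\mathcal{E}_h) \le n/(n+j)$ for every $j$, so $\dim_{\mathcal{M}}(\mathcal{E}_h) = 0$. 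There is no real obstacle here: the entire content has been done in Theorem~\ref{thm.main000}, and this corollary only organizes the exceptional sets via a countable intersection and uses that $n/(\kappa_j - 2s) \to 0$.
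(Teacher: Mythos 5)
Your proof is correct and is exactly the argument the paper intends: its own proof is the one-line "apply Theorem~\ref{thm.main000} to $\Gamma_{\ge\kappa}^\lambda$ and let $\kappa\to\infty$", which you have simply written out in full via the countable intersection of exceptional sets and monotonicity of the two dimensions. The only hypothesis worth checking — that for each $\kappa_j$ one can choose $\tau$ large enough, and that the sets $\Gamma^\lambda_{\ge\kappa}$ do not depend on the choice of $\tau$ — is addressed by the consistency remark following Definition~\ref{defi.baru}, and you correctly note the former.
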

\begin{proof}
Apply Theorem~\ref{thm.main000} to $\Gamma_{\ge \kappa}^\lambda$ and let $\kappa \to \infty$.
\end{proof}

And we get that the free boundary points of order greater or equal than $2+2s$ are at most $(n-2)$-dimensional, for almost every $\lambda\in [0, 1]$.

\begin{cor}
\label{cor.22s}
Let $u_\lambda$ solve \eqref{eq.thinobst_lam_vp}-\eqref{eq.uassump}. Let $\varphi\in C^{4, \alpha}$. Then,
\[
{\rm dim}_{\mathcal{H}}(\Gamma^\lambda_{\ge 2+2s})\le n-2,
\]
for almost every $\lambda\in [0, 1]$.
\end{cor}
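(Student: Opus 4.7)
The plan is a direct reduction: Corollary~\ref{cor.22s} is precisely the boundary case $\kappa = 2+2s$ of Theorem~\ref{thm.main000}, so the whole proof amounts to verifying the hypotheses of that theorem under the regularity assumption $\varphi \in C^{4,\alpha}$ and then reading off the dimension exponent.

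First I would check the compatibility with the generalized frequency framework of Section~\ref{sec.Sch}. Here $\tau = 4$, $a = 1-2s$, and $\kappa = 2+2s$. The condition $\kappa < \tau + \alpha$ becomes $2+2s < 4+\alpha$, which holds since $s<1$ and $\alpha>0$. The condition $\kappa \le \tau + \alpha - a$ becomes $2+2s \le 3+\alpha+2s$, i.e.\ $\alpha \ge -1$, which is trivial. Thus $\kappa = 2+2s$ sits in the admissible range for applying Theorem~\ref{thm.main000}.

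Next I would check which branch of Theorem~\ref{thm.main000} is relevant. Since $n\ge 2$ one has $2+2s \le n+2s$, so we are in the first (Hausdorff-dimension) branch. The theorem then yields, for a.e.\ $\lambda \in [0,1]$,
\[
\dim_{\mathcal{H}}\bigl(\Gamma^\lambda_{\ge 2+2s}\bigr) \;\le\; n - (2+2s) + 2s \;=\; n-2,
\]
which is exactly the statement of Corollary~\ref{cor.22s}.

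There is essentially no obstacle here; the only thing worth flagging is what this corollary does \emph{not} cover. It bounds the higher-order degenerate stratum $\Gamma_{\ge 2+2s}$ but says nothing about $\Gamma_2$, which by Proposition~\ref{prop.singdim} is a priori only $(n-1)$-dimensional. Controlling $\Gamma_2$ in dimension $n-2$ (needed to treat the whole of $\mathrm{Deg}(u_\lambda)$ and prove Theorems~\ref{thm.MAIN0}/\ref{thm.MAIN1}) is the genuinely nontrivial step and is carried out separately via the ODE argument behind Proposition~\ref{prop.2grow} and the forthcoming Proposition~\ref{prop.2mgrow}; Corollary~\ref{cor.22s} itself is a clean specialization of Theorem~\ref{thm.main000}.
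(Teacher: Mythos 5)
Your proposal is correct and coincides with the paper's own proof, which reads simply ``This is Theorem~\ref{thm.main000} with $\kappa = 2+2s$''; your verification of the admissibility conditions $\kappa<\tau+\alpha$, $\kappa\le\tau+\alpha-a$ and $2+2s\le n+2s$ is accurate. The closing remark correctly identifies that $\Gamma_2$ must be handled separately, exactly as the paper does via Theorem~\ref{thm.2grow}.
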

\begin{proof}
This is simply Theorem~\ref{thm.main000} with $\kappa = 2+2s$.
\end{proof}

On the other hand, combining the results from Sections~\ref{sec.Sch} and \ref{sec.dimGam2} with Lemma~\ref{lem.CL} we get the following regarding the free boundary points of order 2.

\begin{thm}
\label{thm.2grow}
Let $u_\lambda$ solve \eqref{eq.thinobst_lam_vp}-\eqref{eq.uassump}, and let $n \ge 2$. Then
\[
{\rm dim}_{\mathcal{H}}(\Gamma^\lambda_{2})\le n-2 \quad\textrm{for a.e.}\quad \lambda\in [0, 1].
\]
\end{thm}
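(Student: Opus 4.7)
The plan is to feed the results of Sections~\ref{sec.Sch} and~\ref{sec.dimGam2} into the GMT lemma from~\cite{FRS19}, recalled here as Lemma~\ref{lem.CL}. The two inputs needed are an \emph{a priori} bound on the Hausdorff dimension of $\Gamma_2 = \bigcup_{\lambda\in[0,1]}\Gamma_2^\lambda$ and a quantitative scattering estimate showing that a free boundary point of order $2$ at parameter $\lambda_\circ$ forces all nearby points at nearby parameters to disappear.

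For the dimensional bound I would apply Proposition~\ref{prop.singdim} with $m=1$; the hypothesis $2m<\tau+\alpha$ holds because $\tau\ge 2$ and $\alpha>0$. This gives $\Gamma_2\subset\bigcup_k M_k$ with each $M_k$ an $(n-1)$-dimensional $C^1$ manifold, so in particular $\dim_{\mathcal{H}}(\Gamma_2)\le n-1$. For the scattering estimate I would invoke Proposition~\ref{prop.2grow}: setting
\[
\gamma := \begin{cases} 2 & \text{if } s\le\tfrac12,\\[2pt] \dfrac{2(2-s)}{1+s} & \text{if } s>\tfrac12,\end{cases}
\]
the proposition states that for every $x_\circ\in\Gamma_2^{\lambda_\circ}$ and every $\eps>0$ there is $\rho=\rho(\eps,x_\circ,\lambda_\circ)>0$ with $B_r(x_\circ)\cap\Gamma_2^\lambda=\varnothing$ whenever $r<\rho$ and $\lambda>\lambda_\circ+r^{\gamma-\eps}$. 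Both hypotheses of Lemma~\ref{lem.CL} are therefore met with $\beta=n-1$ and this choice of $\gamma$.

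Applying Lemma~\ref{lem.CL} then splits into two cases according to whether $\beta\ge\gamma$ or $\beta<\gamma$. A direct check shows that $\gamma\ge 1$ for all $s\in(0,1)$ (indeed $2(2-s)\ge 1+s$ iff $s\le 1$) and $\gamma\le 2$ always. Hence when $n\ge 3$ one has $\beta=n-1\ge 2\ge\gamma$, and alternative~(2) yields $\dim_{\mathcal{H}}(\Gamma_2^\lambda)\le(n-1)-\gamma\le n-2$ for a.e.~$\lambda\in[0,1]$. When $n=2$, one has $\beta=1<\gamma$ (strictly, since $s<1$), so alternative~(1) gives $\dim_{\mathcal{H}}\{\lambda:\Gamma_2^\lambda\neq\varnothing\}<1$; thus $\Gamma_2^\lambda=\varnothing$ for a.e.~$\lambda$ and the conclusion is trivial.

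Since Propositions~\ref{prop.singdim} and~\ref{prop.2grow} already encode the analytic heart of the argument, the final assembly is just a direct invocation of Lemma~\ref{lem.CL}. The only point requiring a short check is the elementary verification that $\gamma\ge 1$ in all regimes, which is what guarantees that the loss $\beta-\gamma$ does not exceed the target dimension $n-2$.
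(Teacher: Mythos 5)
Your proof is correct and follows essentially the same route as the paper: both feed Proposition~\ref{prop.singdim} (giving $\beta = n-1$) and the scattering estimate of Proposition~\ref{prop.2grow} into Lemma~\ref{lem.CL}. The only difference is that the paper weakens the exponent to $\gamma = 1$ (noting $2\frac{2-s}{1+s}>1$ for all $s\in(1/2,1)$) so that alternative (2) of the lemma applies uniformly and gives $(n-1)-1 = n-2$ at once, whereas you retain the sharper $\gamma$ and handle the resulting case split $\beta\ge\gamma$ versus $\beta<\gamma$ --- a harmless elaboration that in fact recovers the stronger bounds the paper records separately in Proposition~\ref{prop.2mgrow}.
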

\begin{proof}
The proof of this result follows applying Lemma~\ref{lem.CL} to the right sets. We consider
\[
E_\lambda := \Gamma_{2}^\lambda,\qquad E := \bigcup_{\lambda\in [0, 1]}E_\lambda = \Gamma_2.\]
Notice that $E$ has dimension $\mathcal{H}(E) = n-1$ by Proposition~\ref{prop.singdim}, so that we can take $\beta = n-1$ in Lemma~\ref{lem.CL}. On the other hand, we know that for any $\lambda_\circ \in [0, 1]$, $x_\circ \in E_{\lambda_\circ}$, and any $\eps > 0$, there exists $\rho = \rho(\eps, x_\circ, \lambda_\circ) > 0$ such that
\[
B_r(x_\circ) \cap E_\lambda = \varnothing \quad \textrm{for all}\quad r < \rho, \textrm{ and } \lambda > \lambda_\circ + r.
\]
thanks to Proposition~\ref{prop.2grow} (notice that $2\frac{2-s}{1+s}> 1$ for all $s\in (1/2, 1)$). That is, the hypotheses of Lemma~\ref{lem.CL} are fulfilled, with $\beta = n-1$ and $\gamma = 1$. The result now follows by Lemma~\ref{lem.CL}.
\end{proof}

In fact, the previous theorem is a particular case of the more general statement involving singular points given by the following proposition. We give it for completeness, although we do not need it in our analysis.

\begin{prop}
\label{prop.2mgrow}
Let $u_\lambda$ solve \eqref{eq.thinobst_lam_vp}-\eqref{eq.uassump}. Let $n \ge 2$ and let $\varphi\in C^{\tau, \alpha}$ for some $\tau \in \N_{\ge 4}$ and $\alpha\in (0, 1)$. Then, if $s \le \frac12$,
\[
{\rm dim}_{\mathcal{H}}(\Gamma^\lambda_{2})\le n-3 \quad\textrm{for a.e.}\quad \lambda\in [0, 1].
\]
Alternatively, if $s > \frac12$,
\[
{\rm dim}_{\mathcal{H}}(\Gamma^\lambda_{2})\le n-1-2\frac{2-s}{1+s} \quad\textrm{for a.e.}\quad \lambda\in [0, 1].
\]
Finally, if $m\in \N$ is such that $2m \le\tau$,
\[
{\rm dim}_{\mathcal{H}}(\Gamma^\lambda_{2m})\le n-1-2m+2s \quad\textrm{for a.e.}\quad \lambda\in [0, 1].
\]
\end{prop}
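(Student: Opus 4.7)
The plan is to repeat, for each of the three statements, the scheme already used for Theorem~\ref{thm.2grow} and Theorem~\ref{thm.main000}: set $E_\lambda:=\Gamma_{2m}^\lambda$ and $E:=\bigcup_{\lambda\in[0,1]}E_\lambda=\Gamma_{2m}$, and apply the GMT lemma (Lemma~\ref{lem.CL}) with an upper bound $\beta$ on $\dim_{\mathcal{H}}E$ together with a separation exponent $\gamma$ controlling how quickly $\Gamma_{2m}^\lambda$ vacates $B_r(x_\circ)$ as $\lambda$ increases. The bound on the total set is the same in all three cases: since $2m<\tau+\alpha$ under the hypothesis $2m\le \tau$ (taking $\tau\ge 4$ for the $m=1$ statements), Proposition~\ref{prop.singdim} yields $\dim_{\mathcal{H}}\Gamma_{2m}\le n-1$, so we take $\beta=n-1$ throughout.

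For the separation exponent $\gamma$, in the case $m=1$ (Parts 1 and 2) I would invoke the sharper Proposition~\ref{prop.2grow}: for each $x_\circ\in\Gamma_2^\lambda$ and each $\eps_\circ>0$, the set $\Gamma_2^{\lambda+\delta^\gamma}\cap B_\delta(x_\circ)$ is empty for $\delta$ small, with $\gamma=2-\eps_\circ$ when $s\le\tfrac12$ and $\gamma=2\tfrac{2-s}{1+s}-\eps_\circ$ when $s>\tfrac12$. For Part~3 with general $m$, rather than trying to obtain a sharp analogue of Proposition~\ref{prop.2grow} at higher singular strata (which would require an ODE argument based on an analogue of Lemma~\ref{lem.ulambda_theta} for $\Gamma_{2m}$), I would simply apply Proposition~\ref{prop.kap} with $\kappa=2m$ to any $x_\circ\in\Gamma_{2m}^\lambda\subset\Gamma_{\ge 2m}^\lambda$: it gives $u_{\lambda+C_*r^{2m-2s}}>\varphi$ on $B_r'(x_\circ')$, which already forces $\Gamma_{2m}^\mu\cap B_r(x_\circ)=\varnothing$ whenever $\mu>\lambda+C_*r^{2m-2s}$, so that $\gamma=2m-2s$. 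Thus no new barrier construction or monotonicity-formula argument beyond the material of Sections~\ref{sec.Sch} and~\ref{sec.dimGam2} is needed; the sharp exponent $2m-2s$ in Part~3 is simply encoded in Proposition~\ref{prop.kap}, combined with the singular-set dimension bound of Proposition~\ref{prop.singdim}.

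With these choices of $\beta$ and $\gamma$, the three values $\beta-\gamma$ coincide exactly with the dimensions $n-3$, $n-1-2\tfrac{2-s}{1+s}$ and $n-1-2m+2s$ appearing in the statement, so the second alternative of Lemma~\ref{lem.CL} finishes the proof whenever $\beta\ge\gamma$. The only point requiring a bit of attention is the complementary regime $\beta<\gamma$, which can occur in low dimension (for example, for $n=2$ in Part~1, or when $2m-2s>n-1$ in Part~3): there the first alternative of Lemma~\ref{lem.CL} yields the strictly stronger conclusion that $\Gamma_{2m}^\lambda=\varnothing$ for $\mathcal{H}^1$-a.e.\ $\lambda\in[0,1]$, so the claimed dimensional bound holds vacuously. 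I expect this case split to be essentially the only subtlety; the rest reduces to checking that the hypotheses of Lemma~\ref{lem.CL} (the existence of the point-dependent threshold $\rho(\eps,x_\circ,\lambda_\circ)$) are exactly what Propositions~\ref{prop.kap} and~\ref{prop.2grow} deliver, which is immediate from their statements.
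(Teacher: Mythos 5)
Your proposal is correct and follows essentially the same route as the paper: the paper's proof likewise takes $\beta=n-1$ from Proposition~\ref{prop.singdim}, uses Proposition~\ref{prop.2grow} for the order-$2$ statements and Proposition~\ref{prop.kap} (as in Theorem~\ref{thm.main000}) with $\gamma=2m-2s$ for general $2m$, and concludes via Lemma~\ref{lem.CL}. Your explicit remark about the regime $\beta<\gamma$, where part (1) of Lemma~\ref{lem.CL} gives the stronger conclusion $\Gamma_{2m}^\lambda=\varnothing$ for a.e.\ $\lambda$, is a correct and welcome clarification that the paper leaves implicit.
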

\begin{proof}
This proof simply follows by analysing the previous results more carefully. The first part follows exactly as Theorem~\ref{thm.2grow}, using Proposition~\ref{prop.2grow} and looking at each case separately.

Finally, regarding general singular points of order $2m$, the proof follows exactly as Theorem~\ref{thm.main000} using that $\Gamma_{2m}$ has dimension $n-1$ instead of $n$ thanks to Proposition~\ref{prop.singdim}.
\end{proof}

Finally, in order to control the size of points of homogeneity in the interval $(2, 2+2s)$, we refer to the following result by Focardi--Spadaro, that establishes that points in $\Gamma_*$ are lower dimensional with respect to the free boundary. The result in \cite{FS19} involves higher order points as well, but we state it in the explicit form it will be used below. 

\begin{prop}[\cite{FS19}]
\label{prop.smallother}
Let $u$ be a solution to the fractional obstacle problem with obstacle $\varphi\in C^{4, \alpha}$ for some $\alpha\in (0, 1)$,
\begin{equation}
\label{eq.top_b}
  \left\{ \begin{array}{rcll}
 L_a u&=&0 & \textrm{ in } B_1\setminus \left(\{x_{{n+1}} = 0\}\cap \{u = \varphi\}\right)\\
  L_a u&\le&0 & \textrm{ in } {B_1}\\
  u &\ge& \varphi & \textrm{ on } \{x_{{n+1}} = 0\}.
  \end{array}\right.
\end{equation}
Let $\theta\in (0, \alpha)$ and let us denote
\begin{equation}
\label{eq.G_star}
\tilde{\Gamma}_* := \bigcup_{\kappa\in(2, 2+2s)}  \bigg\{ x_\circ \in \de\Lambda(u) : \Phi_{\tau, \alpha, \theta} (0^+, \bar u^{x_\circ}) = n+1-2s+2\kappa\bigg\}.
\end{equation}
Then
\[
\dim_{\mathcal{H}}\tilde{\Gamma}_* \le n-2.
\]
Moreover, if $n = 2$, $\tilde{\Gamma}_* $ is discrete.
\end{prop}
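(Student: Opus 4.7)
The plan is to apply Federer's dimension reduction principle to $\tilde{\Gamma}_*$, using a classification result that rules out $(n-1)$-dimensional translation invariance for $\kappa$-homogeneous global solutions with $\kappa\in(2,2+2s)$.

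\textbf{Step 1 (Blow-ups).} First I would fix $x_\circ\in\tilde{\Gamma}_*$ with frequency $\kappa=\kappa(x_\circ)\in(2,2+2s)$. Thanks to the monotonicity of the generalized frequency function $\Phi_{\tau,\alpha,\theta}$ at $x_\circ$, together with the growth estimate \eqref{eq.growthest} and the non-degeneracy consequence of $\Phi(0^+,\bar u^{x_\circ})=n+1-2s+2\kappa$, the rescalings
\[
\bar u^{x_\circ}_r(x):=\frac{\bar u^{x_\circ}(rx)}{\bigl(r^{-n-a}H(r)\bigr)^{1/2}}
\]
are uniformly bounded in an appropriate weighted space, and any subsequential limit as $r\downarrow 0$ is a nonzero $\kappa$-homogeneous global solution $p_{x_\circ}$ of the fractional obstacle problem with zero obstacle, by a standard compactness argument.

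\textbf{Step 2 (Spine dimension bound).} The key geometric step is to show that the invariant subspace
\[
S(p_{x_\circ}):=\bigl\{y\in\R^n\times\{0\}\ :\ p_{x_\circ}(\cdot+y)=p_{x_\circ}\bigr\}
\]
has dimension at most $n-2$. Indeed, if $\dim S(p_{x_\circ})\ge n-1$, then after a rotation $p_{x_\circ}$ depends only on $(x_1,x_{n+1})$ and provides a nontrivial $\kappa$-homogeneous global solution of the \emph{one-dimensional} fractional obstacle problem. The admissible frequencies in the $n=1$ case form the discrete set appearing in \eqref{eq.FBst_s}, namely $\{1+s\}\cup 2\N\cup(2\N+2s)\cup(2\N+1+s)$, which is disjoint from the open interval $(2,2+2s)$; contradiction.

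\textbf{Step 3 (Federer reduction and the case $n=2$).} Upper semicontinuity of the frequency implies that $\tilde{\Gamma}_*$ is an $F_\sigma$ set; stratify it into closed pieces $\tilde{\Gamma}_*^{(j)}$ according to bounds on frequency and growth. On each piece, Federer's dimension reduction principle applies: if $\dim_{\mathcal{H}}\tilde{\Gamma}_*^{(j)}>n-2$ at a density point $x_\circ$, blow-up would produce a global $\kappa$-homogeneous solution invariant along an $(n-1)$-dimensional subspace, contradicting Step 2. Hence $\dim_{\mathcal{H}}\tilde{\Gamma}_*\le n-2$. When $n=2$, the spine must be $\{0\}$, so Federer's argument rules out accumulation points altogether and $\tilde{\Gamma}_*$ is discrete.

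\textbf{Main obstacle.} The crux is Step 2. The classification of $1$-dimensional homogeneous solutions — excluding the interval $(2,2+2s)$ — is classical for $s=\frac12$ (ACS/GP), but for general $s\in(0,1)$ it requires the ODE analysis of traces developed in \cite{CSS08, GR19}. A secondary delicate point is making Federer reduction work cleanly despite only having upper semicontinuity (rather than continuity) of the frequency, which is handled by the standard stratification into closed subsets on which the blow-up map is continuous.
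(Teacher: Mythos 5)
The paper does not prove this proposition at all: it is quoted directly from Focardi--Spadaro \cite{FS19}, so there is no in-paper argument to compare against. Your outline correctly reconstructs the strategy of that reference --- Almgren-type frequency blow-up at a point of frequency $\kappa\in(2,2+2s)$, the bound $\dim S(p_{x_\circ})\le n-2$ via the classification of homogeneous solutions with one thin variable (whose admissible frequencies avoid the open interval $(2,2+2s)$), and Federer dimension reduction, with discreteness in $n=2$ following from the trivial spine --- and you correctly identify the two genuinely delicate inputs (the 2D classification for general $s$, and running the reduction with only upper semicontinuous frequency), so the proposal is consistent with the intended proof.
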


Combining the previous results we obtain the following.

\begin{cor}
\label{cor.COR}
Let $u_\lambda$ solve \eqref{eq.thinobst_lam_vp}-\eqref{eq.uassump}. Let $\varphi\in C^{4, \alpha}$. Then,
\[
{\rm dim}_{\mathcal{H}}({\rm Deg}(u_\lambda))\le n-2,
\]
for almost every $\lambda\in [0, 1]$.
\end{cor}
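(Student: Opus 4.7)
The plan is to decompose the degenerate set of $u_\lambda$ according to the frequency at each free boundary point, and then apply the three main results of this section to each stratum separately. Concretely, for every $\lambda\in [0,1]$ I would write
\[
{\rm Deg}(u_\lambda) = \Gamma^\lambda_{2} \cup \tilde\Gamma^\lambda_{*} \cup \Gamma^\lambda_{\ge 2+2s},
\]
where $\tilde\Gamma^\lambda_{*}$ denotes the set of free boundary points of $u_\lambda$ with frequency in the open interval $(2,2+2s)$, as defined in \eqref{eq.G_star}. This exhausts all frequencies $\kappa\ge 2$, i.e.\ precisely the degenerate part of $\Gamma(u_\lambda)$.

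Next, I would bound the two outer strata using the $\lambda$-perturbation machinery. For $\Gamma^\lambda_{2}$, Theorem~\ref{thm.2grow} directly gives $\dim_{\mathcal{H}}(\Gamma^\lambda_{2})\le n-2$ for a.e.\ $\lambda$. For $\Gamma^\lambda_{\ge 2+2s}$, I would apply Corollary~\ref{cor.22s}, whose hypotheses are satisfied since $\varphi\in C^{4,\alpha}$ implies $\tau+\alpha>2+2s$ and, recalling $a=1-2s$, also $\tau+\alpha-a\ge 2+2s$; this yields $\dim_{\mathcal{H}}(\Gamma^\lambda_{\ge 2+2s})\le n-2$ for a.e.\ $\lambda$.

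The intermediate stratum $\tilde\Gamma^\lambda_{*}$ is the only delicate one, because the $\lambda$-perturbation exponent $\kappa-2s$ provided by Proposition~\ref{prop.kap} degenerates to $1$ as $\kappa\downarrow 2$, so Lemma~\ref{lem.CL} with the trivial bound $\beta=n$ would only give codimension $1$, which is not sufficient. For this reason I would \emph{not} treat $\tilde\Gamma^\lambda_{*}$ by the same method, but instead invoke Proposition~\ref{prop.smallother} of Focardi--Spadaro, which gives $\dim_{\mathcal{H}}(\tilde\Gamma^\lambda_{*})\le n-2$ for \emph{every} $\lambda\in[0,1]$ without any exceptional set, using only $\varphi\in C^{4,\alpha}$. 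Taking the union of the three estimates, and noting that the exceptional $\lambda$-sets coming from the first and third bounds each have Lebesgue measure zero, one concludes $\dim_{\mathcal{H}}({\rm Deg}(u_\lambda))\le n-2$ for a.e.\ $\lambda\in[0,1]$. At this point no step is really an obstacle: all the technical work, namely the monotonicity-via-perturbation estimate in Proposition~\ref{prop.kap}, the GMT dimension-slicing Lemma~\ref{lem.CL}, the refined ODE bound of Proposition~\ref{prop.2grow}, and the Whitney argument of Section~\ref{sec.dimGam2}, has already been carried out.
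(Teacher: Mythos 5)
Your proof is correct and follows essentially the same route as the paper: the same three-way decomposition ${\rm Deg}(u_\lambda)=\Gamma^\lambda_{2}\cup\tilde\Gamma^\lambda_{*}\cup\Gamma^\lambda_{\ge 2+2s}$, handled respectively by Theorem~\ref{thm.2grow}, Proposition~\ref{prop.smallother}, and Corollary~\ref{cor.22s}. Your additional remarks on why the intermediate frequencies $(2,2+2s)$ cannot be treated by the perturbation argument, and on verifying the hypotheses $\kappa<\tau+\alpha$ and $\kappa\le\tau+\alpha-a$, are accurate but not needed beyond what the paper records.
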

\begin{proof}
This follows by combining the previous results. Notice that
\[
{\rm Deg}(u_\lambda) = \Gamma^\lambda \setminus  \Gamma^\lambda_{1+s} = \Gamma^\lambda_{2}\cup \tilde{\Gamma}_*(u_\lambda)\cup \Gamma_{\ge 2+2s}^\lambda.
\]
The result now follows thanks to Proposition~\ref{prop.smallother}, Corollary~\ref{cor.22s}, and Theorem~\ref{thm.2grow}.
\end{proof}
\begin{rem}
Following the proofs carefully, one can see that the previous result holds true for obstacles $\varphi\in C^{3, 1}$ if $s \le \frac12$. The condition $\varphi\in C^{4, \alpha}$ is only used whenever $s > \frac12$, since otherwise, in this case the previous methods do not imply the {\em smallness} of $\tilde{\Gamma}_*$.
\end{rem}

We can now prove the main results.

\begin{proof}[Proof of Theorem~\ref{thm.MAIN0}]
Notice that, by the Harnack inequality, there exists a constant $c$ such that $u_{\lambda + \eps} \ge g_\lambda + c\eps$ in $\de B_1\cap \{|x_{n+1}|\ge \frac12\}$. Thus, let us consider $w_\lambda = c^{-1} u_\lambda$, so that $w_\lambda$ fulfils \eqref{eq.uassump} and we can apply Corollary~\ref{cor.COR} to $w_\lambda$. Since $\Gamma_{\kappa}(w_\lambda) = \Gamma_{\kappa}(u_\lambda)$ for all $\kappa\in [3/2, \infty]$, $ \lambda\in [0,1]$,
\[
{\rm dim}_{\mathcal{H}}(\Gamma(u_\lambda)\setminus \Gamma_{3/2}(u_\lambda))\le n-2.
\]
We finish by recalling that $\Gamma_{3/2}(u_\lambda) = {\rm Reg}(u_\lambda)$ is open, and a $C^{\infty}$ $(n-1)$-dimensional manifold (see \cite{ACS08, KPS15, DS16}).
\end{proof}

\begin{proof}[Proof of Theorem~\ref{thm.MAIN01}]
With the same transformation as in the previous proof, the result now follows from Corollary~\ref{cor.main000}.
\end{proof}

\begin{proof}[Proof of Theorem~\ref{thm.MAIN1}]
Let us suppose that, after a rescaling if necessary, $\{\varphi > 0\}\subset B_1'\subset \R^n$.

We define $w_\lambda = v_\lambda + \lambda$, which fulfils a fractional obstacle problem, with obstacle $\varphi$, but with limiting value $\lambda$. Take the standard $a$-harmonic (i.e., with the operator $L_a$) extension of $w_\lambda$, which we denote $\tilde w_\lambda$, from $\R^n$ to $\R^{n+1}$.
 Thanks to \cite{CS07}, $\tilde w_\lambda$ fulfils a problem of the form \eqref{eq.thinobst_lam_vp} in $B_1\subset \R^{n+1}$.

 Moreover, by the Harnack inequality, $\tilde w_{\lambda+\eps}\ge \tilde w_\lambda + c\eps$ in $B_1\cap\{|x_{n+1}|\ge \frac12\}$ for some constant $c$. Now, the functions $c^{-1}\tilde w_\lambda$ fulfil \eqref{eq.uassump}, so that we can apply Corollary~\ref{cor.COR} to $c^{-1}\tilde w_\lambda$ to obtain
 \[
{\rm dim}_{\mathcal{H}}({\rm Deg}(v_\lambda)) = {\rm dim}_{\mathcal{H}}( \Gamma(v_\lambda)\setminus \Gamma_{1+s}(v_\lambda)) \le n-2.
\]
The result now follows since $\Gamma_{1+s}(v_\lambda) = {\rm Reg}(v_\lambda)$ is open, and a $C^{\infty}$ $(n-1)$-dimensional manifold (see \cite{ACS08, JN17, KRS19}).
\end{proof}

\begin{proof}[Proof of Theorem~\ref{thm.MAIN11}]
With the same transformation as in the previous proof, the result follows from Corollary~\ref{cor.main000}.
\end{proof}


\section{Examples of degenerate free boundary points}
\label{sec.examples}

Let us consider the thin obstacle problem in a domain $\Omega\subset \R^{{n+1}}$, with zero obstacle defined on $x_{{n+1}} = 0$. That is,
\begin{equation}
\label{eq.thinobst}
  \left\{ \begin{array}{rcll}
  -\Delta u&=&0 & \textrm{ in } \Omega\setminus \left(\{x_{{n+1}} = 0\}\cap \{u = 0\}\right)\\
  -\Delta u&\ge&0 & \textrm{ in } \Omega\\
  u &\ge& 0 & \textrm{ on } \{x_{{n+1}} = 0\} \\
  u & = & g & \textrm{ on } \de \Omega,
  \end{array}\right.
\end{equation}
for some continuous boundary values $g\in C^0(\de\Omega)$ such that $g > 0$ on $\de\Omega\cap \{x_{{n+1}}=0\}$.

\begin{proof}[Proof of Proposition~\ref{prop.singpoints}]
We will show that there exists some domain $\Omega$ and some boundary data $g$ such that the solution to \eqref{eq.thinobst} has a sequence of regular points (of order $3/2$) converging to a non-regular (singular) point (of order $2$). Then, the solution from Proposition~\ref{prop.singpoints} will be the solution here constructed restricted to any ball inside $\Omega$ containing such singular point, with its own boundary data (and appropriately rescaled, if necessary).

In order to build such a solution we will use \cite[Lemma 3.2]{BFR18}, which says that solutions to \begin{equation}
\label{eq.thinobst2}
  \left\{ \begin{array}{rcll}
  -\Delta u&=&0 & \textrm{ in } \Omega\setminus \left(\{x_{{n+1}} = 0\}\cap \{u = \varphi\}\right)\\
  -\Delta u&\ge&0& \textrm{ in } \Omega\\
  u &\ge& \varphi & \textrm{ on } \{x_{{n+1}} = 0\} \\
  u & = & 0 & \textrm{ on } \de \Omega,
  \end{array}\right.
\end{equation}
with $\Delta_{x'}\varphi \le -c_0 < 0$ and $\Omega$ convex and even in $x_{n+1}$ have a free boundary containing only regular points (frequency $3/2$) and singular points of frequency 2. In particular, they establish a non-degeneracy result stating that for any $x_\circ = (x_\circ', 0)\in \Gamma(u)$  then
\begin{equation}
\label{eq.nondeg}
\sup_{B'_r(x_\circ')}(u-\varphi) \ge c_1 r^2\quad\textrm{ for all } r \in (0, r_1),
\end{equation}
for some $r_1, c_1$ that do not depend on the point $x_\circ$. More precisely, they show it around points $x \in \{u > \varphi\}$ and then take the limit $x \to x_\circ \in \Gamma(u)$.

On the other hand, from their proof one can also show that in fact, the convexity on $\Omega$ can be weakened to convexity in $\Omega$ in the $\boldsymbol{e}_{n+1}$ direction.

Let us fix $n= 2$. Up to subtracting the right obstacle, we consider the problem
\begin{equation}
\label{eq.thinobst_phi}
  \left\{ \begin{array}{rcll}
  -\Delta u&=&0 & \textrm{ in } \Omega\setminus \left(\{x_{3} = 0\}\cap \{u = 0\}\right)\\
  -\Delta u&\ge&0 & \textrm{ in } \Omega\\
  u &\ge& \varphi_t & \textrm{ on } \{x_{3} = 0\} \\
  u & = & 0 & \textrm{ on } \de \Omega,
  \end{array}\right.
\end{equation}
for some analytic obstacle $\varphi_t$, and some domain $\Omega$ smooth, convex and even in $x_3$, to be chosen.

Let $\varphi_t(x) = t-(1-x_1^2)^2 -4x_2^2$. Notice that, in the thin space, $\Delta_{x'} \varphi_t = -12x_1^2-4\le -4 $, so that, by the result in \cite{BFR18}, under the appropriate domain $\Omega$, the points on the free boundary $\Gamma(u_t)$ are either regular (with frequency 3/2) or singular (with frequency 2), and we have non-degeneracy \eqref{eq.nondeg}. Let $\Omega' := \{x'\in \R^2 : (1-x_1^2)^2+4x_2^2\le 2\}$, and take any bounded, convex in $x_3$, and even in $x_3$ extension of $\Omega'$, $\Omega$. Then, if $t = 2$ and $\Omega\subset \{|x_3|\le 1\}$, the solution $u_2$ to \eqref{eq.thinobst_phi} is exactly equal to the solution to\footnote{To see this, we compare $u_2$ with the harmonic extension of $\varphi_2$, $\tilde \varphi_2(x_1, x_2, x_3) = \varphi_2(x_1, x_2) + 2 x_3^2 + 6x_1^2x_3^2 -x_3^4$.}
\[
  \left\{ \begin{array}{rcll}
  \Delta u_2&=&0 & \textrm{ in } \Omega\setminus \{x_{3} = 0\}\\
  u_2 & = & 0 & \textrm{ on } \de \Omega\\
    u_2 & = & \varphi_2 & \textrm{ on } \{x_3 = 0\},
  \end{array}\right.
\]
so that, in particular, the contact set is full.

Notice that, when $t < 0$, the contact set is empty, $\Lambda(u_t) = \varnothing$, and when $t = 0$ the contact set is two points, $p_\pm = (\pm 1, 0, 0)$ (which, in particular, are singular points). Notice, also, that the contact set is always closed and is monotone in $t$, in the sense that $\Lambda(u_{t_1}) \subseteq \Lambda(u_{t_2})$ if $t_1 \le t_2$. Let us say that a set is $p_\pm$-connected if the points $p_+$ and $p_-$ belong to the same connected component. Then, there exists some $t^*\in (0, 2]$ such that $\Lambda(u_t)$ is not $p_\pm$-connected for $t < t^*$, and is $p_\pm$-connected for $t > t^*$. Notice, also, that since $\Lambda(u_t)\subset \{x' : \varphi_t \ge 0\}$ then $t^* > 1$.


We claim that $\Lambda(u_{t^*})$ is $p_\pm$-connected and has a set of regular points converging to a singular point.

Let us first show that $\Lambda(u_{t^*})$ is $p_\pm$-connected. Suppose it is not. That is, $\Lambda(u_{t^*})$ is a closed set with $p_\pm$ on different connected components. On the other hand, $\Lambda(u_{t})$ is compact and $p_\pm$-connected for $t > t^*$, and nested ($\Lambda(u_{t})\subset \Lambda(u_{t'})$ for $t < t'$). Take
\[
\tilde{\Lambda}_{t^*} := \bigcap_{t \in (t^*, 2]}\Lambda(u_t),
\]
then $\tilde{\Lambda}_{t^*}$ is $p_\pm$-connected (being the intersection of compact $p_\pm$-connected nested sets), and $\Lambda(u_{t^*}) \subsetneq\tilde{\Lambda}_{t^*}$, since $\Lambda(u_{t^*})$ is not $p_\pm$-connected. In particular, there exists some $x_\circ\in \Lambda(u_{t})$ for all $t > t^*$ such that $x_\circ\not\in\Lambda(u_{t^*})$. But, by continuity, this is not possible: $0 < (u_{t^*}-\varphi_{t^*})(x_\circ) = \lim_{t\downarrow t^*}(u_{t}-\varphi_{t})(x_\circ) = 0$. Therefore, $\Lambda(u_{t^*})$ is $p_{\pm}$-connected.

Take $\Lambda^p(u_{t^*})$ to be the connected component containing both $p_+$ and $p_-$. Then, $\de\Lambda^p(u_{t^*})$ must contain at least one singular point. Indeed, suppose it is not true. In this case, all points in $\de\Lambda^p(u_{t^*})$ are regular, and in particular, $\Lambda^p(u_{t^*})$ is a compact connected set with smooth boundary, with all points of the boundary having positive density (in $\{x_3 = 0\}$), and therefore $\left(\Lambda^p(u_{t^*})\right)^\circ$ is also connected. Let us denote $\Lambda^{p}_\pm(u_{t})$ the corresponding connected components of $\Lambda(u_t)$ containing $p_\pm$ for $t < t^*$ (notice that, by definition of $t^*$, $\Lambda^{p}_+(u_{t})\neq \Lambda^{p}_-(u_{t})$. Then,
\[
\Lambda_{t<t^*}^{p, \circ} := \left(\bigcup_{t < t^*} \left(\Lambda_+^p(u_t)\right)^\circ\right)\cup \left(\bigcup_{t < t^*} \left(\Lambda_-^p(u_t)\right)^\circ\right) \subsetneq \left(\Lambda^p(u_{t^*})\right)^\circ,
\]
given that the left-hand side is not connected, and the right-hand side is. Take $y_\circ\in \left(\Lambda^p(u_{t^*})\right)^\circ\setminus \Lambda_{t<t^*}^{p, \circ}$, so that around $y_\circ$ the non-degeneracy \eqref{eq.nondeg} holds for any $t < t^*$. Then, there exists some $r_\circ > 0 $, $r_1 > r_\circ$ (where $r_1$ is defined in \eqref{eq.nondeg}) such that $B_{r_\circ}'(y_\circ)\subset \Lambda^p(u_{t^*})$, so that $u_{t^*}-\varphi_{t^*}|_{B_{r_\circ}'(y_\circ)} \equiv 0$ and
\[
0 < c_1 r_\circ^2 \le \lim_{t \uparrow t^*} \sup_{B_r'(x_\circ')} (u_t-\varphi_t) = \sup_{B_r'(x_\circ')} (u_{t^*}-\varphi_{t^*}) = 0,
\]
a contradiction. That is, not all points on $\de\Lambda^p(u_{t^*})$ are regular. By \cite{BFR18}, then there exist some degenerate (singular) point of frequency 2, $x_D\in \de\Lambda^p(u_{t^*})$. Now consider $\Gamma_D$, the connected component in $\de\Lambda^p(u_{t^*})$ containing $x_D$. Since the density of the contact set around singular points is zero, if $\Gamma_D$ consist exclusively of singular points, then $\Gamma_D$ itself is the whole connected component $\Lambda^p(u_t)$, and $p_\pm \in \Gamma_D$ are singular points. Nonetheless, for small $t > 0$, $\Lambda(u_t)$ contains a neighbourhood of $p_\pm$, which contradicts the singularity of $p_\pm$. Therefore, $\Gamma_D$ is not formed exclusively of singular points, and then there exists a sequence of regular points converging to a singular point.
\end{proof}

Now, before proving Proposition~\ref{prop.secondexample}, let us show the following lemma.

\begin{lem}
\label{lem.exta}
Let $m\in \N_{> 0}$, and let $\eta\in C^\infty_c(B_2)$ such that $\eta \equiv 1$ in $B_1$. Let $u_+ = \max\{u, 0\}$ and $u_- = -\min\{u, 0\}$. Then,
\[
(-\Delta)^s \left[(x_1)_+^{2m+1+s} \eta\right] - C_{m, s} (x_1)_-^{2m+1-s} \in C^\infty(B_{1/2}),
\]
for some positive  constant $C_{m, s}>0$ depending only on $n$, $m$, and $s$.
\end{lem}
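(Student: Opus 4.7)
The plan is to exploit the pseudo-locality of $(-\Delta)^s$ together with a reduction to a one-dimensional distributional identity. First, pick a 1D cutoff $\phi \in C^\infty_c((-2,2))$ with $\phi \equiv 1$ on $[-1,1]$ and split
\[
(x_1)_+^{2m+1+s}\eta(x) = (x_1)_+^{2m+1+s}\phi(x_1) + (x_1)_+^{2m+1+s}\bigl(\eta(x) - \phi(x_1)\bigr).
\]
The second summand is supported in $B_2 \setminus B_1$, since both $\eta(x)$ and $\phi(x_1)$ equal $1$ on $B_1$; for any $x \in B_{1/2}$ it vanishes in a ball of radius $1/2$ around $x$, so $(-\Delta)^s$ of it is given by an integral whose integrand is smooth in $x \in B_{1/2}$, hence real-analytic there. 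For the first summand, which depends only on $x_1$, a Fourier computation (using that $\widehat{g(x_1)}$ is supported on $\{\xi_2 = \cdots = \xi_n = 0\}$, where $|\xi|^{2s} = |\xi_1|^{2s}$) gives the identity $(-\Delta)^s_{\R^n}[g(x_1)] = (-\Delta)^s_\R[g](x_1)$. Thus it suffices to prove the 1D statement $(-\Delta)^s_\R\bigl[t_+^{2m+1+s}\phi(t)\bigr] - C_{m,s}\, t_-^{2m+1-s} \in C^\infty((-1/2,1/2))$.

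The next step is to establish the distributional identity $(-\Delta)^s_\R[t_+^{2m+1+s}] = C_{m,s}\, t_-^{2m+1-s}$, viewed as tempered distributions. Start from the classical computation
\[
(-\Delta)^s_\R[t_+^s](t) = c_0\, t_-^{-s}, \qquad c_0 = -c_s\, B(s+1,s) < 0,
\]
obtained by direct integration for $t < 0$ (no principal value needed). Since $(-\Delta)^s$ commutes with distributional differentiation and
\[
D\bigl(t_+^{s+k}\bigr) = (s+k)\, t_+^{s+k-1}, \qquad D\bigl(t_-^{k-s}\bigr) = -(k-s)\, t_-^{k-s-1},
\]
one may iterate: assuming $(-\Delta)^s_\R[t_+^{s+k-1}] = C_{k-1}\,t_-^{k-1-s}$, writing $D((-\Delta)^s t_+^{s+k}) = (s+k) C_{k-1} t_-^{k-1-s}$ and integrating (the constant of integration vanishes because the left-hand side must vanish on $(0,\infty)$ distributionally) gives $(-\Delta)^s_\R[t_+^{s+k}] = C_k\, t_-^{k-s}$ with the recursion $C_k = -\tfrac{s+k}{k-s}\,C_{k-1}$. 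For $k = 2m+1$, the alternating signs combine with $c_0 < 0$ to give $C_{m,s} := C_{2m+1} > 0$.

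To reintroduce the cutoff $\phi$, decompose $t_+^{2m+1+s}\phi(t) = t_+^{2m+1+s} - t_+^{2m+1+s}(1-\phi(t))$. The correction $t_+^{2m+1+s}(1-\phi(t))$ is a tempered distribution that vanishes identically on $(-1,1)$, so by the pseudo-locality of $(-\Delta)^s$ (its singular support is contained in that of the input), its fractional Laplacian is real-analytic on $(-1,1)$, hence smooth on $(-1/2,1/2)$. Subtracting $C_{m,s}\, t_-^{2m+1-s}$ and combining with the real-analytic piece from the first step yields the lemma.

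The main obstacle is justifying the distributional identity rigorously, because $t_+^{2m+1+s}$ grows polynomially and $(-\Delta)^s$ applied to it must be defined with care --- either via Fourier analytic continuation from the convergent range $\alpha \in (-1, 2s-1)$ (extending the identity $|\xi|^{2s}\widehat{t_+^\alpha} = \widehat{A\, t_+^{\alpha-2s} + B\, t_-^{\alpha-2s}}$ meromorphically in $\alpha$), or via the hypersingular integral representation with sufficiently many Taylor subtractions to absorb the growth. The same care is needed to extend the pseudo-locality argument to tempered distributions of polynomial growth, which can be obtained by smooth truncation and a limiting argument.
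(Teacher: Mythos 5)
Your argument is correct in outline, but it takes a genuinely different route from the paper. The paper works through the Caffarelli--Silvestre extension: it lets $u_1$ be the decaying $a$-harmonic extension of $(x_1)_+^{2m+1+s}\eta$ and $u_2$ the unique $(2m+1+s)$-homogeneous even $a$-harmonic extension of $(x_1)_+^{2m+1+s}$; the identity $\lim_{y\downarrow 0}y^a\partial_{x_{n+1}}u_2 = C_{m,s}(x_1)_-^{2m+1-s}$ with $C_{m,s}>0$ is imported from \cite[Proposition A.1]{FS18} (homogeneity plus the fact that $u_2$ is a global supersolution of the thin obstacle problem), and the smoothness of the remainder follows by applying boundary regularity for $L_a$ to all tangential derivatives of $v=u_1-u_2$, which vanish on $B_1'$. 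You instead stay on the Fourier/distributional side: reduce to one dimension, invoke pseudo-locality for the pieces supported away from $B_{1/2}$, and derive the key identity $(-\Delta)^s t_+^{2m+1+s}=C_{m,s}t_-^{2m+1-s}$ (modulo polynomials) by iterating $D$ starting from the classical $k=0$ case, obtaining positivity of $C_{m,s}$ from the explicit sign recursion rather than from a supersolution property. Both routes hinge on the same structural fact about the homogeneous profile; yours has the advantage of producing the constant explicitly and avoiding the extension machinery, while the paper's avoids all issues of defining $(-\Delta)^s$ on polynomially growing functions. Two small points to tighten: (i) your justification that the constant of integration vanishes (``the left-hand side must vanish on $(0,\infty)$'') is circular as stated, since that vanishing is part of what the induction produces --- but it is also immaterial, because any additive polynomial accumulated along the iteration is absorbed into the $C^\infty$ remainder, so only the coefficient of $t_-^{k-s}$, which the recursion does determine, matters; (ii) the second summand $(x_1)_+^{2m+1+s}(\eta-\phi(x_1))$ is not compactly supported (it lives in a slab), but it is bounded and vanishes on $B_1$, which is enough for the differentiated integral representation to converge, so the conclusion stands.
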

\begin{proof}
We consider the extension problem from $\R^n$ to $\R^{n+1}$. Namely, let us denote $u_1$ the extension of $(x_1)_+^{2m+1+s} \eta$, that is, $u_1$ solves
\[
  \left\{ \begin{array}{rcll}
  L_a u_1&= &0& \textrm{ in } \R^{n+1}\cap \{x_{n+1} > 0\}\\
  u_1(x', 0) &=&(x_1)_+^{2m+1+s} \eta & \textrm{ for } x'\in \R^{n}\\
  u_1(x) &\to &0 & \textrm{ as } |x|\to \infty,
  \end{array}\right.
\]
where $a = 1-2s$. Then, we know that
\[\left\{(-\Delta)^s \left[(x_1)_+^{2m+1+s} \eta\right]\right\}(x') = \lim_{y\downarrow 0} y^a\partial_{x_{n+1}} u_1(x', y)
\]
for $x'\in \R^n$. On the other hand, let $u_2$ be the unique $a$-harmonic extension of $(x_1)_+^{2m+1+s}$ from $\R^n$ to $\R^{n+1}$. That is, $u_2$ is homogeneous (of degree $2m+1+s$), and fulfils
\[
  \left\{ \begin{array}{rcll}
  L_a u_2&= &0& \textrm{ in } \R^{n+1}\cap \{x_{n+1} > 0\}\\
  u_2(x', 0) &=&(x_1)_+^{2m+1+s} & \textrm{ for } x'\in \R^{n}.
  \end{array}\right.
\]
The fact that such solution exists, and that $\lim_{y\downarrow 0} y^a\partial_{x_{n+1}} u_2(x', y) = 0$ if $x_1 > 0$, follows, for example, from \cite[Proposition A.1]{FS18}. On the other hand, notice that, since $u_2$ is $(2m+1+s)$-homogeneous, we have that, $\lim_{y\downarrow 0} y^a\partial_{x_{n+1}} u_2(x', y) = C_{m, s} |x_1|^{2m+1-s}$ for $x_1 < 0$, so that, in all,
\[
\lim_{y\downarrow 0} y^a\partial_{x_{n+1}} u_2(x', y) = C_{m, s} (x_1)_-^{2m+1-s}.
\]
Again, by \cite[Proposition A.1]{FS18} $u_2$ is a solution to the thin obstacle problem with operator $L_a$, so $C_{m, s}> 0$ (otherwise, it would not be a supersolution for $L_a$).

Let now $v = u_1 - u_2$. Notice that $v$ fulfils
\[
  \left\{ \begin{array}{rcll}
  L_a v&= &0& \textrm{ in } \R^{n+1}\cap \{x_{n+1} > 0\}\\
  v(x', 0) &=&(x_1)_+^{2m+1+s}(\eta - 1) & \textrm{ for } x'\in \R^{n}.
  \end{array}\right.
\]
In particular, $v(x', 0) = 0$ in $B_1'$. Let us denote $D^\alpha_{x'} v$ a derivative in the $x'\in \R^n$ direction of $v$, with multi-index $\alpha = (\alpha_1, \alpha_2, \dots, \alpha_n, 0)$. Then $D^\alpha_{x'}v$ is such that
\[
  \left\{ \begin{array}{rcll}
  L_a D^\alpha_{x'}v&= &0& \textrm{ in } B_1\cap \{x_{n+1} > 0\}\\
  D^\alpha_{x'}v (x', 0) &=&0 & \textrm{ for } x'\in B_1'.
  \end{array}\right.
\]
Then, by estimates for the operator $L_a$, we know that, if we define
\[
w_\alpha(x') := \lim_{y\downarrow 0} y^a\partial_{x_{n+1}} D^\alpha v(x', y),\qquad w_0(x') := \lim_{y\downarrow 0} y^a\partial_{x_{n+1}} v(x', y),
\]
then $w_\alpha$ satisfies $w_\alpha\in C^{\beta}(B_{1/2})$ for some $\beta > 0$ (see \cite[Proposition 4.3]{CSS08} or \cite[Proposition 2.3]{JN17}). In particular, since $w_\alpha = D^\alpha w_0$, we have that $w_0\in C^{|\alpha|+\beta}(B_{1/2})$. Since this works for all multi-index $\alpha$, $w_0\in C^\infty(B_{1/2})$.

Thus, combining the previous steps,
\begin{align*}
(-\Delta)^s \left[(x_1)_+^{2m+1+s} \eta\right] - C_{m, s} (x_1)_-^{2m+1-s}&  =  \lim_{y\downarrow 0} y^a\partial_{x_{n+1}} (u_1(x', y) - u_2(x', y))\\
& = \lim_{y\downarrow 0} y^a\partial_{x_{n+1}} v(x', y)\\
& = w_0 \in C^\infty(B_{1/2}),
\end{align*}
as we wanted to see.
\end{proof}

We are now in disposition to give the proof of Proposition~\ref{prop.secondexample}.
\begin{proof}[Proof of Proposition~\ref{prop.secondexample}]
We divide the proof into two steps. In the first step, we show the results holds up to an intermediate claim, that will be proved in the second step.
\\[0.1cm]
{\bf Step 1.} Thanks to \cite[Theorem 4]{Gru15} or \cite[Section 2]{AR19}, we have that $(-\Delta)^s(d^s\eta)\in C^\infty(\overline{\Omega^c})$ for any $\eta\in C^\infty$ with sufficient decay at infinity. Here, $d$ denotes any $C^\infty$ function (with at most polynomial growth at infinity) such that in a neighbourhood of $\Omega$ coincides with the distance to $\Omega$, and $d|_\Omega \equiv 0$.

In particular, once $d$ is fixed, we know that for any $k\in \N$,
\[
(-\Delta)^{s}(d^{k+s}) = f\in C^\infty(\overline{\Omega^c}),
\]
and, if we make sure that $d>0$ in $\Omega^c$, with exponential decay at infinity, we get
\[
|f(x)|\le \frac{C}{1+|x|^{n+2s}}.
\]
Define, for some $g$ with the previous decay, $|g(x)|\le C(1+|x|^{n+2s})^{-1}$, $\varphi_g$ such that
\[
(-\Delta)^s \varphi_g = g,
\]
that is, one can take
\[
\varphi_g(x) = I_{2s} g(x) := c \int_{\R^n}\frac{g(y)}{|x-y|^{n-2s}} dy.
\]

Notice that
\[
\begin{split}
|\varphi_g(x)|& \le C\int_{\R^n}\frac{dy}{(1+|y|^{n+2s})|x-y|^{n-2s}}   \\
& \le C\int_{|y-x|\ge \frac{|x|}{2}}\frac{dy}{(1+|y|^{n+2s})|x-y|^{n-2s}} +C\int_{|y-x|\le \frac{|x|}{2}}\frac{dy}{(1+|y|^{n+2s})|x-y|^{n-2s}} \\
& \le \frac{C}{|x|^{n-2s}}\int_{|y-x|\ge \frac{|x|}{2}}\frac{dy}{1+|y|^{n+2s}}+ \frac{C}{1+|x|^{n+2s}}\int_{|y-x|\le \frac{|x|}{2}}\frac{dy}{|x-y|^{n-2s}},
\end{split}
\]
where we are using that if $|y-x|\le \frac{|x|}{2}$ then $|y|\ge \frac{|x|}{2}$ by triangular inequality. Notice also that
\[
\int_{|y-x|\le \frac{|x|}{2}}\frac{dy}{|x-y|^{n-2s}}= \int_{B_{|x|/2}} \frac{dz}{|z|^{n-2s}} = \int_{0}^{|x|/2} r^{2s-1} dr = C |x|^{2s}.
\]
In all, also using that $\varphi(x)$ is bounded around the origin, we obtain that
\[
|\varphi_g(x)|\le \frac{C}{1+|x|^{n-2s}}.
\]

Now let us define $v = d^{k+s}$. We claim that, if $k = 2m+1$ for some $m\in \N_{> 0}$, then $v$ fulfils
\begin{equation}
\label{eq.claimfreq}
  \left\{ \begin{array}{rcll}
  (-\Delta)^{s} v&\ge&\bar f & \textrm{ in } \R^n\\
    (-\Delta)^{s} v&=&\bar f & \textrm{ in } \{v > 0\}\\
    v & \ge & 0& \textrm{ in } \R^n,
  \end{array}\right.
\end{equation}
where $\bar f$ is some appropriate $C^\infty$ extension of $f$ inside $\Omega$. Then, if we define
\[
u:= v + \varphi_{-\bar f},
\]
$u$ fulfils,
\[
  \left\{ \begin{array}{rcll}
  (-\Delta)^{s} u&\ge&0 & \textrm{ in } \R^n\\
    (-\Delta)^{s} u&=&0 & \textrm{ in } \{u > \varphi_{-\bar f}\}\\
    u & \ge & \varphi_{-\bar f}& \textrm{ in } \R^n,
  \end{array}\right.
\]
and notice that, since $v> 0$ in $\Omega^c$ and $v = 0$ in $\Omega$, by definition, we have that the contact set is exactly equal to $\Omega$. Moreover, by the growth of $v$ at the boundary, the free boundary points are of frequency $k+s$. Also, by the decay at infinity of $v$ and $\varphi_{-\bar f}$, $u\to 0$ at infinity.
\\[0.1cm]
{\bf Step 2.} We still have to show that, for an appropriate choice of $\bar f$, \eqref{eq.claimfreq} holds for $k = 2m+1$. Notice that, in fact, in $\Omega^c$ we know that $f$ is $C^\infty$. Moreover, we only have to show the claim for a neighbourhood of $\de\Omega$ inside $\Omega$, given that exactly at the boundary we expect a \emph{unique} extension of $f$ (that is, all derivatives are prescribed at the boundary).

That is, if we let $\Omega_\delta := \{x\in \Omega : \textrm{dist}(x, \de\Omega) < \delta\}$, we have to show that there exists some $\delta > 0$ small enough such that $(-\Delta)^s v \ge \bar f$ in $\Omega_\delta$, where we recall that $\bar f$ is a $C^\infty$ extension of $f\in C^\infty(\overline{\Omega^c})$ inside $\Omega$.

Let $z_\circ\in \de \Omega$. After a translation and a rotation, we assume that $z_\circ = 0$ and $\nu(0, \de \Omega) = \boldsymbol{e}_1$, where $\nu(0, \de\Omega)$ denotes the outward normal to $\de\Omega$ at $0$. After rescaling if necessary, let us assume that we are working in $B_1$, that each point in $B_1$ has a unique projection onto $\de \Omega$, and that $d|_{B_1\cap \Omega^c} = {\rm dist}(\cdot, \Omega)$. Moreover, again after a rescaling if necessary (since $\Omega$ is a $C^\infty$ domain), let us assume that
\begin{equation}
\label{eq.assump}
\{y_1\le -|(y_2,\dots,y_n)|^2\}\cap B_1 \subset \Omega\cap B_1  \subset \{y_1\le |(y_2,\dots,y_n)|^2\}\cap B_1,
\end{equation}
so that, in particular, $\{-t\boldsymbol{e}_1 : t\in (0, 1)\}\subset \Omega$.

Let $\eta\in C^\infty_c(B_2)$ such that $\eta\equiv 1$ in $B_1$, and let $u_+ = \max\{u, 0\}$ denote the positive part, and $u_- = -\min\{u, 0\}$ the negative part. Let $\alpha  =2m+1+s$, and define
\[
u_1(x) := (x_1)_+^\alpha\eta,\qquad w(x) := v(x) - u_1(x) = d^\alpha(x) - (x_1)_+^\alpha\eta.
\]

Notice that, by Lemma~\ref{lem.exta},
\begin{equation}
\label{eq.cinfdiff}
(-\Delta)^s u_1(x) - C_{m, s} (x_1)_-^{2m+1-s} \in C^\infty(B_{1/2}),
\end{equation}
for some positive constant $C_{m, s}>  0$.

We begin by claiming that
\begin{equation}
\label{eq.claim}
w_1(x_1) := [(-\Delta)^s w] (x_1,0,\dots,0)\in C^{2m+1-s+\eps}((-1/2, 1/2)),
\end{equation}
for some $\eps > 0$.

Indeed, let any $z_1\in (-1/2, 1/2)$.
Let us denote for $\gamma\in (0, 1]$,  $\delta_{\boldsymbol{e}_1, h}^{(\gamma)}$ the incremental quotient in the $\boldsymbol{e}_1$ direction of length $0<h<1/4$ and order $\gamma$; that is,
\[
\delta_{\boldsymbol{e}_1, h}^{(\gamma)} F (y_\circ) := \frac{|F(y_\circ+h\boldsymbol{e}_1)-F(y_\circ)|}{|h |^\gamma}.
\]
Since $d \equiv (x_1)_+$ on $\{x_2 =\dots= x_n = 0\}\cap B_1$, we have that $w(x_1, 0,\dots,0) = 0$ on $(-1, 1)$. Now notice that, for any $\ell\in \N$, $\gamma\in (0, 1]$,
\begin{equation}
\label{eq.w1}
\delta_{\boldsymbol{e}_1, h}^{(\gamma)} \frac{d^\ell}{dx_1^\ell} w_1 (z_1) = \left\{\delta_{\boldsymbol{e}_1, h}^{(\gamma)}\de_{\boldsymbol{e}_1}^\ell [(-\Delta)^s w] \right\}(z_1,0,\dots,0) = \int_{\R^n} \frac{\delta_{\boldsymbol{e}_1, h}^{(\gamma)}\de_{\boldsymbol{e}_1}^\ell w (\bar z_1 + y)}{|y|^{n+2s}} \,dy,
\end{equation}
where $\bar z_1 = \{z_1,0,\dots,0\}\in \R^n$, and we are using that $\delta_{\boldsymbol{e}_1, h}^{(\gamma)} \de_{\boldsymbol{e}_1}^\ell w (\bar z_1) = 0$. In order to show \eqref{eq.claim},  we will bound
\begin{equation}
\label{eq.claimbound}
\lim_{h\downarrow 0} \left|\delta_{\boldsymbol{e}_1, h}^{(\gamma)} \frac{d^\ell}{dx_1^\ell} w_1 (z_1)\right|\le C\quad\textrm{in} \quad B_{1/2},
\end{equation}
for some $C$, for $\ell = 2m$ and for $\gamma = 1-s+\eps$ for some $\eps  >0$.


We need to separate into different cases according to $\bar z_1+y$. Notice that the the integral in \eqref{eq.w1} is immediately bounded in $\R^n\setminus B_{1/2}$ because $w \in C^\alpha$ and the integrand is thus bounded by $C|y|^{-n-2s}$. We can, therefore, assume that $y \in B_{1/2}$ so that $\bar z_1+y\in B_1$.

Let us start by noticing that, from \eqref{eq.cinfdiff}, together with the fact that $(-\Delta)^s v$ is smooth in $\Omega^c$, we already know that $w_1\in C^\infty([0, 1/2))$, so that we only care about the case $z_1 < 0$.

Let $z_1 < 0$, so that $\bar z_1\in \Omega$. If $\bar z_1 + y \in \Omega \cap\{x_1 < 0\} \cap B_1$, then $w(\bar z_1 + y) = 0$. If $\bar z_1 + y \in \Omega \cap\{x_1 > 0\} \cap B_1$, then $|w(\bar z_1 + y)| = |z_1+y_1|^{\alpha}$ and $|\de^\ell_{\boldsymbol{e}_1} w|(\bar z_1+y) = C |z_1+y_1|^{\alpha-\ell}\le C |y|^{2(\alpha-\ell)}$; where we are using that $z_1+y_1 \le |(y_2,\dots,y_n)|^2\le |y|^2$, see \eqref{eq.assump}. Similarly, $\lim_{h\downarrow 0}|\delta_{\boldsymbol{e}_1, h}^{(\gamma)} \de^\ell_{\boldsymbol{e}_1} w|(\bar z_1+y) \le C |z_1+y_1|^{\alpha-\ell-\gamma}\le C |y|^{2(\alpha-\ell-\gamma)}$.

Conversely, if  $\bar z_1 + y \in \Omega^c \cap\{x_1 < 0\} \cap B_1$, $|w(\bar z_1+y)| = d^\alpha(\bar z_1+y)$ and $|\de_{\boldsymbol{e}_1}^\ell w|(\bar z_1+y)\le Cd^{\alpha-\ell}(\bar z_1+y)\le C|y|^{2(\alpha-\ell)}$, where we are using \eqref{eq.assump} again. Taking the incremental quotients, $\lim_{h\downarrow 0}|\delta_{\boldsymbol{e}_1, h}^{(\gamma)}\de_{\boldsymbol{e}_1}^\ell w|(\bar z_1+y)\le Cd^{\alpha-\ell-\gamma}(\bar z_1+y)\le C|y|^{2(\alpha-\ell-\gamma)}$

Finally, if $\bar z_1+y\in \Omega^c\cap \{x_1> 0\}\cap B_1$, both terms in the expression of $w$ are relevant. Using that $|a^\beta-b^\beta|\le C |a-b||a^{\beta-1}+b^{\beta-1}|$ we obtain that
\[
|w(\bar z_1+y)|\le C|d - u_1|\left(d^{\alpha-1}+u_1^{\alpha-1}\right) (\bar z_1+y).
\]

Notice that on $\{x_2 = \dots = x_n =0\}\cap B_1$, $d = u_1$ and $\de_i d = \de_i u = 0$ for $2\le i \le n$, so that in fact $|d-u_1|(\bar z_1 + y)\le C |y|^2$. On the other hand, we also have that $d^{\alpha-1}(\bar z_1+ y)\le C|y|^{\alpha-1}$, so that
\begin{equation}
\label{eq.estimates}
|w(\bar z_1+y)|\le C|y|^{\alpha+1}.
\end{equation}

Notice, also, that $w\in C^\alpha$ (i.e., $\nabla^{\ell+1} w \in C^{s}$). By classical interpolation inequalities for H\"older spaces (or fractional Sobolev spaces with $p = \infty$) we know that, if $0<\gamma < 1$,
\[
\|\nabla^\ell w\|_{C^\gamma(B_{r}(\bar z_1))} \le C \|\nabla^{\ell+1} w\|_{C^{s}(B_{r}(\bar z_1))}^{\frac{\ell+\gamma}{\alpha}} \|w\|_{L^\infty(B_{r}(\bar z_1))}^{\frac{1+s-\gamma}{\alpha}}
\]
(see, for instance, \cite[Theorem 6.4.5]{BL76}). Thus, in our case we have that
\begin{equation}
\label{eq.wrong}
\lim_{h\downarrow 0}\left|\delta_{\boldsymbol{e}_1, h}^{(\gamma)}\frac{d^\ell}{dx_1^\ell} w\right|(\bar z_1 + y) \le C |y|^{(\alpha+1)\frac{1+s-\gamma}{\alpha}}.
\end{equation}


Thus, putting all together we obtain that
\[
\lim_{h\downarrow 0 }\left|\delta_{\boldsymbol{e}_1, h}^{(\gamma)}\de_{\boldsymbol{e}_1}^\ell w\right|(\bar z_1+y)\le C\max\left\{|y|^{2(\alpha-\ell-\gamma)}, |y|^{(\alpha+1)\frac{1+s-\gamma}{\alpha}}\right\}.
\]
If we want \eqref{eq.claimbound} to hold, we need (by checking \eqref{eq.w1})
\begin{equation}
\label{eq.weneed}
2(\alpha-\ell-\gamma)> 2s\qquad \textrm{ and }\qquad (\alpha+1)\frac{1+s-\gamma}{\alpha} > 2s,
\end{equation}
for some $1-s<\gamma < 1$, and $\ell = 2m$ (recall we need to show $\gamma = 1-s+\eps$ for some $\eps  > 0 $). The first inequality holds as long as $\gamma < 1$. The second inequality will hold if
\[
\gamma < 1+s-\frac{2s\alpha}{\alpha+1} = 1-\frac{\alpha-1}{\alpha+1} s.
\]
Thus, we can choose $\gamma = 1-s+\eps$ with $0<\eps <\frac{2}{\alpha+1}s$ and \eqref{eq.claim} holds with this $\eps$.

Now, combining \eqref{eq.claim}-\eqref{eq.cinfdiff} we obtain that
\[
f_v:= [(-\Delta)^s v ](x_1,0,\dots,0) - C_{m, s}(x_1)_-^{2m+1-s} \in C^{2m+1-s+\eps}((-1/2, 1/2)).
\]
In particular, if we recall that $\bar f\in C^\infty(B_1)$ is a $C^\infty$ extension of $(-\Delta)^s v$ inside $\Omega$, and noticing that $f_v-\bar f(x_1,0,\dots,0) \equiv 0$ for $x_1 > 0$, we have that $\bar f(\cdot, 0,\dots, 0)- f_v\in C^{2m+1-s+\eps}((-1/2, 1/2))$ and
\[
f_v - \bar f(x_1, 0, \dots, 0)  = o(|x_1|^{2m+1-s+\eps}),
\]
or
\[
[(-\Delta)^s v ](x_1,0,\dots,0) = C_{m, s}(x_1)_-^{2m+1-s} + \bar f (x_1, 0,\dots,0) +o(|x_1|^{2m+1-s+\eps}).
\]
Thus, since $C_{m, s} > 0$, $[(-\Delta)^s v ](x_1,0,\dots,0) \ge \bar f(x_1, 0, \dots, 0)$ if $|x_1|$ is small enough (depending only on $n$, $m$, $s$, and $\Omega$), as we wanted to see.

We have that, for a fixed $\bar f$ extension of $f$ inside $\Omega$, $(-\Delta)^s v \ge \bar f$ in $\Omega_\delta$ for some small $\delta > 0$ depending only on $n$, $m$, $s$, and $\Omega$. Up to redefining $\bar f$ in $\Omega\setminus\Omega_{\delta/2}$, we can easily build an $\bar f\in C^\infty$ such that $(-\Delta)^s v \ge \bar f$ in $\Omega$, as we wanted to see.
\end{proof}

To finish, we study the points of order infinity. To do that, we start with the following proposition.
\begin{prop}
\label{prop.Ccontact}
Let $\mathcal{C}\subset B_1\subset \R^n$ be any closed set. Then, there exists a non-trivial solution $u$ and an obstacle $\varphi\in C^\infty(\R^n)$ such that
\[
  \left\{ \begin{array}{rcll}
  (-\Delta)^{s} u&\ge&0 & \textrm{ in } \R^n\\
    (-\Delta)^{s} u&=&0 & \textrm{ in } \{u > \varphi\}\\
    u & \ge & \varphi & \textrm{ in } \R^n,
  \end{array}\right.
\]
and $\Lambda(u)\cap B_1 = \{u = \varphi\}\cap B_1 = \mathcal{C}$.
\end{prop}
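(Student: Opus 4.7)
The plan is to exploit the fact that Proposition~\ref{prop.Ccontact} imposes no decay condition on $u$ at infinity, in contrast to the formulation \eqref{eq.fract_obst}. This flexibility lets one take $u$ identically equal to a non-zero constant and then engineer the obstacle $\varphi$ so that it touches $u$ exactly along $\mathcal{C}$.

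First I would invoke the classical Whitney $C^\infty$-construction to obtain a function $\psi\in C^\infty(\R^n)$ satisfying $\psi\ge 0$ on $\R^n$, $\psi^{-1}(0)=\mathcal{C}$, and in fact with every partial derivative of $\psi$ vanishing on $\mathcal{C}$. The standard recipe is as follows: since $\mathcal{C}$ is closed, the open set $\R^n\setminus\mathcal{C}$ admits a Whitney decomposition into balls $B_{r_i}(x_i)$ with $r_i$ comparable to ${\rm dist}(x_i,\mathcal{C})$; taking a subordinate smooth partition of unity $\{\chi_i\}$ and setting $\psi:=\sum_i c_i\chi_i$ with positive coefficients $c_i$ decaying fast enough so that the series, together with all its term-by-term derivatives, converges locally uniformly produces the desired function.

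With $\psi$ in hand, I would simply set $u\equiv 1$ and $\varphi:=1-\psi\in C^\infty(\R^n)$. Then $u-\varphi=\psi\ge 0$, with equality precisely on $\mathcal{C}$, so $\Lambda(u)=\mathcal{C}\subset B_1$. Since $u$ is constant its fractional Laplacian vanishes identically (the integrand $(u(x)-u(y))/|x-y|^{n+2s}$ in the singular-integral representation is zero everywhere), so the remaining two conditions $(-\Delta)^s u\ge 0$ in $\R^n$ and $(-\Delta)^s u=0$ in $\{u>\varphi\}$ are both trivially verified, and $u\equiv 1$ is manifestly non-trivial. The only non-elementary ingredient in the whole argument is the Whitney construction of $\psi$, and there is essentially no real obstacle to overcome: without any decay requirement on $u$, prescribing an arbitrary closed contact set becomes a problem of smooth extension rather than of PDE.
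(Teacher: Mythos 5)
Your core idea coincides with the paper's: take a smooth nonnegative function vanishing exactly on $\mathcal{C}$ (your Whitney function $\psi$ is the paper's $f_{\mathcal{C}}$, for which the paper gives no construction, so your sketch is if anything more complete on that point) and define the obstacle \emph{a posteriori} so that it touches a pre-existing function precisely on $\mathcal{C}$. The complementarity system is indeed verified by your pair $(u,\varphi)=(1,\,1-\psi)$, since $(-\Delta)^s 1\equiv 0$.

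The one genuine objection is your choice $u\equiv 1$. The statement asks for a \emph{non-trivial solution}, and a constant is exactly the degenerate example that wording is meant to exclude: the obstacle plays no role whatsoever in determining $u$, and $(-\Delta)^su$ vanishes identically, so nothing about the obstacle problem is being exercised. Since this proposition feeds into Proposition~\ref{prop.inf_points} (whose point is that \emph{genuine} solutions can have fractal free boundaries), the constant example would undercut the claim. The paper's proof repairs precisely this defect with essentially no extra work: it first takes $\psi_0\in C^\infty_c(B_1(2\boldsymbol{e}_1))$ with $\psi_0>0$ somewhere and lets $u$ be the (non-constant) solution of the fractional obstacle problem with obstacle $\psi_0$; then $u>\psi_0$ in $B_1$, so $u$ is $s$-harmonic and $C^\infty$ there, and one sets $\varphi=\psi_0+\eta\,(u-\psi_0)(1-f_{\mathcal{C}})$ with a cutoff $\eta\in C^\infty_c(B_{3/2})$, $\eta\equiv 1$ on $B_1$. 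Then $u-\varphi\ge 0$ with equality in $B_1$ exactly on $\mathcal{C}$, and $u$ is a bona fide non-constant solution. I would accept your argument only after replacing the constant by such a $u$; with that substitution your obstacle should be built from $u$ as in the paper (you cannot take $\varphi=u-\psi$ globally, since $u$ is only $C^{1,s}$ across its original contact set, whence the cutoff).
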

\begin{proof}
Take any obstacle $\psi\in C^\infty(\R^n)$ such that ${\rm supp}\,\psi\subset\subset B_1(2\boldsymbol{e}_1)$, with $\psi> 0$ somewhere, and take the non-trivial solution to
\[
  \left\{ \begin{array}{rcll}
  (-\Delta)^{s} u&\ge&0 & \textrm{ in } \R^n\\
    (-\Delta)^{s} u&=&0 & \textrm{ in } \{u > \psi\}\\
    u & \ge & \psi & \textrm{ in } \R^n.
  \end{array}\right.
\]
Notice that $ u > \psi$ in $B_1$ (in particular, $u\in C^\infty(B_1)$). Let $f_{\mathcal{C}}$ be any $C^\infty$ function such that $0\le f_{\mathcal{C}} \le 1$ and $\mathcal{C} = \{f_{\mathcal{C}} = 0\}$.

Now let $\eta\in C^\infty_c(B_{3/2})$ such that $\eta \ge 0$ and $\eta \equiv 1 $ in $B_1$. Consider, as new obstacle, $\varphi = \psi + \eta(u-\psi)(1-f_\mathcal{C}) \in C^\infty(B_1)$. Notice that $u - \varphi\ge 0$. Notice, also, that for $x\in B_1$, $(u - \varphi)(x) =  0$ if and only if $x\in \mathcal{C}$. Thus, $u$ with obstacle $\varphi$ gives the desired result.
\end{proof}

And now we can provide the proof of Proposition~\ref{prop.inf_points}:
\begin{proof}[Proof of Proposition~\ref{prop.inf_points}]
The proof is now immediate thanks to Proposition~\ref{prop.Ccontact}, since we can choose as contact set any closed set with boundary of dimension greater or equal than $n - \eps$ for any $\eps > 0$, and points of finite order are at most $(n-1)$-dimensional. 
\end{proof}

\section{The parabolic Signorini problem}
\label{sec.parab}
We consider now the parabolic version of the thin obstacle problem. Given $(x_\circ, t_\circ)\in \R^{n+1}\times\R$, we will use the notation
\[
\begin{split}
Q_r (x_\circ, t_\circ)& := B_r(x_\circ)\times(t_\circ-r^2,t_\circ]\subset \R^{n+1}\times \R,\\
Q_r'(x_\circ', t_\circ) & := B_r'(x_\circ')\times(t_\circ-r^2, t_\circ]\subset \R^n \times \R,\\
Q_r^+((x_\circ', 0), t_\circ) & := B_r^+((x_\circ', 0))\times(t_\circ-r^2, t_\circ]\subset \R^{n+1}\times\R.
\end{split}
\]

We will denote, $Q_r = Q_r(0, 0)$, $Q_r' = Q_r'(0, 0)$ and $Q_r^+ = Q_r^+(0,  0)$. We consider the problem posed in $Q_1^+ := B_1^+\times(-1, 0]$ for some fixed obstacle
\[
\varphi :B_1'\to \R,\quad\varphi\in C^{\tau, \alpha}(\overline{B_1'}),\quad\tau \in \N_{\ge 2},\alpha\in(0,1],
\]
that is,
\begin{equation}
\label{eq.parab_thin}
\left\{
\begin{array}{rcll}
\de_t u - \Delta u & = & 0,&\quad\textrm{in}\quad Q_1^,\\
\min\{u-\varphi,\de_{x_{n+1}}u\} & = & 0, & \quad\textrm{on}\quad Q_1'.
\end{array}
\right.
\end{equation}

The free boundary for \eqref{eq.parab_thin} is given by
\[
\Gamma(u) := \de_{Q_1'}\{(x', t) \in Q_1' : u(x', 0, t) > \varphi(x')\},
\]
where $\de_{Q_1'}$ denotes the boundary in the relative topology of $Q_1'$. For this problem, it is more convenient to study the {\em extended} free boundary, defined by
\[
\overline{\Gamma}(u) := \de_{Q_1'}\{(x', t) \in Q_1' : u(x', 0, t) = \varphi(x'), \,\de_{x_{n+1}} u (x', 0, t)  =0\},
\]
so that $\overline{\Gamma}(u) \supset \Gamma(u)$. This distinction, however, will not come into play in this work.

In order to study \eqref{eq.parab_thin}, one also needs to add some boundary condition on $(\de B_1\times(-1, 0])\cap \{x_{n+1} > 0\}$. Instead of doing that, we will assume the additional hypothesis $u_t > 0$ on $(\de B_1\times(-1, 0])\cap \{x_{n+1} > 0\}$. That is, there is actually some time evolution, and it makes the solution grow. Recall that such hypothesis is (somewhat) necessary, and natural in some applications (see subsection~\ref{ss.parab}).

Notice, also, that if $u_t > 0$ on the spatial boundary, by strong maximum principle applied to the caloric function $u_t$ in $Q_1\cap \{x_{n+1} > \frac12\}$, we know that $u_t > c > 0$ for $x_{n+1}> \frac12$. Thus, after dividing $u$ by a constant, we may assume $c = 1$, and thus, our problem reads as
\begin{equation}
\label{eq.parab_thin2}
\left\{
\begin{array}{rcll}
u_t - \Delta u & = & 0&\quad\textrm{in}\quad Q_1^+\times(-1, 0],\\
\min\{u-\varphi,\de_{x_{n+1}}u\} & = & 0 & \quad\textrm{on}\quad Q_1',\\
u_t & > & 0& \quad\textrm{on}\quad (\de B_1\times(-1, 0])\cap \{x_{n+1} > 0\},\\
u_t & \ge & 1& \quad\textrm{in}\quad Q_1\cap\{x_{n+1}>\frac12\}.
\end{array}
\right.
\end{equation}

In order to deal with the order of free boundary points, one requires the introduction of heavy notation, analogous to what has been presented in the elliptic case, but for the parabolic version. We will avoid that by focusing on the main property we require about the order of the extended free boundary points:
\begin{defi}

Let $(x_\circ, t_\circ)\in \overline{\Gamma}(u)\cap Q_{1-h}$ be an extended free boundary point. We define
\[
\overline{u}^{x_\circ, t_\circ}(x, t)  := u((x+x_\circ',x_{n+1}), t+t_\circ) -  \varphi(x'+x_\circ') + Q^{x_\circ}_\tau(x') - Q_\tau^{x_\circ, 0}(x', x_{n+1}),
\]
where $Q_\tau^{x_\circ}$ is the Taylor polynomial of order $\tau$ of $\varphi$ at $x_\circ$, and $Q_\tau^{x_\circ, 0}$ is its harmonic extension to $\R^{n+1}$.

We say that $(x_\circ, t_\circ)\in \overline{\Gamma}(u)\cap Q_{1-h}$ is an extended free boundary point of order $\ge \kappa$, $(x_\circ, t_\circ)\in \Gamma_{\ge \kappa}$, where $2 \le \kappa \le \tau$, if
\[
|\overline{u}^{x_\circ, t_\circ}|\le Cr^{\kappa}\quad\textrm{in}\quad Q_r^+,
\]
for all $r < \frac{h}{2}$, and for some constant $C$ depending only on the solution $u$.
\end{defi}

Notice that, in particular, the points of order greater or equal than $\kappa$ as defined in \cite{DGPT17} fulfil the previous definition. Notice, also, that we have denoted by $\Gamma_{\ge \kappa}$ the set of points of order $\ge\kappa$.

Thus, we can proceed to prove the following proposition, analogous to Proposition~\ref{prop.kap}:
\begin{prop}
\label{prop.boundtime}
Let $h > 0$ small, and let $(x_\circ, t_\circ)\in Q_{1-h}^+\cap \Gamma_{\ge \kappa}$ with $t_\circ < -h^2$, where $2 \le \kappa \le 3$. Then,
\[
u(\cdot, t_\circ + C_* t^{\kappa-1}) > \varphi\quad\textrm{in}\quad B_t'(x_\circ'),\quad\textrm{for all}\quad 0<t < T_h,
\]
for some constant $C_*$ depending only on $n$, $h$, $u$, and $T_h$ depending only on $n$, $h$, $\tau$, $\kappa$, $u$.
\end{prop}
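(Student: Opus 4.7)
I would mimic the proof of Proposition~\ref{prop.kap}, with the time variable playing the role of the parameter $\lambda$. Three ingredients are needed: (i) a pointwise lower bound on the time-difference quotient $\de_\eta u(x,t):=\eta^{-1}(u(x,t+\eta)-u(x,t))$, a parabolic analogue of Lemma~\ref{lem.ulambda}; (ii) a parabolic $\varepsilon$-lemma analogous to Lemma~\ref{lem.epslem}; and (iii) their combination with the $\kappa$-th order growth of $\bar u^{x_\circ,t_\circ}$.

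For (i), from \eqref{eq.parab_thin2} one has $\de_\eta u\ge 0$ globally and $\de_\eta u\ge 1$ on $B_1\cap\{x_{n+1}\ge \tfrac12\}$; moreover, $u_t\ge 0$ forces the thin-space contact set $\Lambda(t)$ to be nonincreasing in $t$, so (after even reflection in $x_{n+1}$) $\de_\eta u$ is caloric outside $\Lambda(t)$ and vanishes continuously on it. Comparing with a caloric barrier $\Psi$ vanishing on $\{x_{n+1}=0\}$, equal to $1$ on the lateral parabolic boundary at $\{|x_{n+1}|\ge 1/2\}$, and zero elsewhere on the parabolic boundary, the boundary Harnack inequality for the heat equation gives $\Psi\asymp |x_{n+1}|$ near the thin space after a parabolic waiting time $\gtrsim h^2$. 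Combined with the interior parabolic Harnack on a parabolic ball not meeting $\Lambda$, this yields
\[
\de_\eta u(x_\circ, t_\circ)\ge c\cdot \mathrm{dist}(x_\circ,\Lambda(u(\cdot, t_\circ)))
\]
for $(x_\circ,t_\circ)\in Q_{1-h}$ with $t_\circ<-h^2$, where $c=c(n,h,u)>0$. The parabolic $\varepsilon$-lemma (ii) says that if $w\in C^0(Q_1)$ satisfies $w\ge 1$ on $Q_1\cap\{|x_{n+1}|\ge \varepsilon\}$, $w\ge -\varepsilon$ in $Q_1$, $w\ge 0$ on $\Lambda\subset Q_1'$, and $|w_t-\Delta w|\le \varepsilon$ on $Q_1\setminus\Lambda$, then $w>0$ in $Q_{1/2}$ for $\varepsilon<\varepsilon_\circ(n)$; this is a direct transcription of Lemma~\ref{lem.epslem} using that the spatial polynomial $P(x)=|x'-z'|^2-n\,x_{n+1}^2$ is caloric, so that $v(x,s):=w(x,s)+\tfrac{1}{n}P(x)-\varepsilon\,x_{n+1}^2$ is a heat supersolution off $\Lambda$ and is nonnegative on the parabolic boundary of a cylinder centred at a hypothetical zero of $w$.

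For (iii), fix $(x_\circ,t_\circ)\in\Gamma_{\ge\kappa}$, $t>0$ small, and set $\eta:=C_*\,t^{\kappa-1}$. For every $s\in(\eta/2,\eta]$, applying (i) with time shift $s$ to points $x\in B_t^+(x_\circ)\cap\{|x_{n+1}|\ge t\sigma\}$ gives
\[
u(x,t_\circ+s)-u(x,t_\circ)\ge c\,s\cdot t\sigma \ge \tfrac{c}{2}\,C_*\sigma\, t^{\kappa}.
\]
Combined with the $\kappa$-th order bound $|\bar u^{x_\circ,t_\circ}|\le Ct^{\kappa}$ on $Q_t^+$ and the monotonicity $u(\cdot,t_\circ+s)\ge u(\cdot,t_\circ)$, this gives $\bar u^{x_\circ,t_\circ}(\cdot,s)\ge (\tfrac{c}{2}C_*\sigma-C)t^{\kappa}$ in the slab $\{|x_{n+1}|\ge t\sigma\}$ for every $s\in(\eta/2,\eta]$. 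Rescaling parabolically by $t$ in space and $t^2$ in time and normalising by $(\tfrac{c}{2}C_*\sigma-C)t^\kappa$, the rescaled function fits the hypotheses of (ii) in $Q_1$ after choosing first $\sigma<\varepsilon_\circ$ small and then $C_*$ large; the smallness of the rescaled right-hand side $\lesssim t^{\tau+\alpha-\kappa}/C_*$ uses $\kappa\le\tau+\alpha$. The $\varepsilon$-lemma then yields $u(\cdot,t_\circ+\eta)>\varphi$ on $B_t'(x_\circ')$.

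The main obstacle is the time-scale mismatch in (iii): for $\kappa\in[2,3)$ one has $\eta\gg t^2$, so the time perturbation is much larger than the natural parabolic cylinder scale $t^2$. This is why (i) must be applied to produce a lower bound that is uniform over the entire time window $(t_\circ+\eta/2,t_\circ+\eta]$, whose length is comparable to $\eta$ rather than $t^2$; this forces a careful iteration of the boundary Harnack across successive parabolic cylinders. The parabolic waiting time needed for the boundary Harnack in (i) is precisely what dictates the hypothesis $t_\circ<-h^2$.
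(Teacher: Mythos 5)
Your proposal is correct and follows the same overall scheme as the paper: a lower bound on the time increment away from the thin space, the $\kappa$-order growth at $(x_\circ,t_\circ)$, a parabolic rescaling, and the parabolic $\varepsilon$-lemma (which the paper does not reprove but simply quotes as \cite[Lemma 11.5]{DGPT17}). The one genuine difference is in your ingredient (i): you build a parabolic analogue of Lemma~\ref{lem.ulambda}, with a caloric barrier and a boundary Harnack inequality, to bound the time difference quotient from below by ${\rm dist}(x_\circ,\Lambda(u(\cdot,t_\circ)))$. The paper instead observes that $u_t$ is itself a nonnegative caloric function in $Q_1^+$ with $u_t\ge 1$ on $\{x_{n+1}\ge\tfrac12\}$, so the parabolic Hopf lemma (strong maximum principle) directly yields $u_t\ge c\sigma$ on $(B_{1/2}^+\cap\{x_{n+1}\ge\sigma\})\times[t_\circ,0]$ --- a bound in terms of the distance to the thin space only, but valid \emph{uniformly for all later times} after one application on the whole cylinder. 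Since in step (iii) the bound is only ever invoked at points with $|x_{n+1}|\ge t\sigma$, the thin-space version suffices, and integrating $u_t$ in time immediately gives growth linear in $s$ over the entire window $(0,\eta]$. Consequently, what you identify as the ``main obstacle'' (the mismatch $\eta=C_*t^{\kappa-1}\gg t^2$ forcing an iteration of the boundary Harnack across successive parabolic cylinders) is an artifact of insisting on the contact-set-distance version of (i); in the paper's route there is nothing to iterate, and the condition $t_\circ<-h^2$ is only needed for the Hopf-lemma waiting time and to fit the backward cylinders inside the domain, as you correctly surmise. Your stronger (i) is not wrong, just heavier than necessary.
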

\begin{proof}
Let us assume, for simplicity in the notation, that $x_\circ = 0$, and $t_\circ = -\frac12$, and we denote $\overline{u} := \overline{u}^{0, -1/2}$. Notice that, by the parabolic Hopf Lemma, since $\overline{u}_t \ge 0$ in $Q_1$ and $\overline{u}_t \ge 1$ in $Q_1\cap \{x_{n+1}\ge \frac12\}$ we have that for some constant $c$ and for any $\sigma > 0$,
\[
\overline{u}_t \ge c\sigma\quad\textrm{in}\quad (B^+_{1/2}\cap\{x_{n+1}\ge \sigma\})\times[-1/2, 0].
\]

Notice, also, that since $(0, -1/2)\in \R^{n+1}\times\R$ is an extended free boundary point of order $\ge \kappa$, we have that, for $r > 0$ small enough,
\begin{equation}
\label{eq.extkap}
\overline{u}(\cdot, -1/2+s)\ge \overline{u}(\cdot, -1/2)\ge -Cr^{\kappa}\quad\textrm{in}\quad B_r^+,
\end{equation}
for $s\ge 0$ by the monotonicity of the solution in time.

On the other hand, since $\overline{u}_t \ge cr\sigma$ in $\{x_{n+1} \ge r\sigma\}$, we have that
\[
\overline{u}(\cdot, -1/2+ s) \ge c(r\sigma) s + \overline{u}(\cdot, -1/2)\quad\textrm{in}\quad \{x_{n+1}\ge r\sigma\} \quad\textrm{for}\quad s \ge 0.
\]
As in \eqref{eq.extkap}, this gives
\[
\overline{u}(\cdot, -1/2+ s) \ge c(r\sigma) s -Cr^\kappa\quad\textrm{in}\quad \{x_{n+1}\ge r\sigma\}\cap B_r^+ \quad\textrm{for}\quad s \ge 0.
\]
Let $w(y, \zeta) = \overline{u} (ry, -1/2+r^2\zeta)$. Then we have that
\[
w(y, \zeta) \ge -Cr^\kappa, \quad\textrm{for}\quad y\in B_1^+\quad\textrm{for}\quad \zeta\ge 0,
\]
and
\[
w(y, \zeta) \ge c(r\sigma)r^2 \zeta -Cr^\kappa,\quad\textrm{for}\quad y\in \{y_{n+1}\ge \sigma\}\cap B_1^+\quad\textrm{for}\quad \zeta \ge 0.
\]
Notice, also, that since
\[
|(\de_t - \Delta) \overline{u}| = o(r^{\tau -2})\quad\textrm{in}\quad B_r^+,
\]
then
\[
|(\de_\zeta - \Delta_y) w|  = o(r^\tau) \quad\textrm{in}\quad B_1^+.
\]

Considering now $\bar w (y, \zeta):= \frac{\sigma}{Cr^\kappa}w(y, \zeta)$, we have that
\[
\bar w(y, \zeta) \ge -\sigma, \quad\textrm{for}\quad y\in B_1^+~\textrm{  and  }~\zeta\ge 0,
\]
\[
\bar w(y, \zeta) \ge cr^{3-\kappa} \sigma^2 \zeta -\sigma,\quad\textrm{for}\quad y\in \{y_{n+1}\ge \sigma\}\cap B_1^+~\textrm{  and  }~\zeta \ge 0,
\]
and
\[
|(\de_\zeta - \Delta_y) \bar w| \le \sigma \quad\textrm{in}\quad B_1^+,
\]
for $r > 0$ small enough. Let us take $\zeta = C_*r^{\kappa-3}$, for some $C_*$ depending on $n$ and $\sigma$ such that  $cr^{3-\kappa} \sigma^2 \zeta -\sigma \ge 1$. Then, by \cite[Lemma 11.5]{DGPT17} (which is the parabolic version of Lemma~\ref{lem.epslem} for $a = 0$), there exists some $\sigma_\circ > 0$ depending on $n$ such that if $\sigma \le \sigma_\circ$, then $\bar{w}(\cdot, C_*r^{\kappa-3})> 0$ in $\overline{B_{1/2}^+}$. In particular, recalling the definition of $\bar w$, this yields the desired result.
\end{proof}

As in the elliptic case, the non-regular part of the free boundary is $\Gamma_{\ge 2}$ (see \cite[Proposition 10.8]{DGPT17}). Thanks to Proposition~\ref{prop.boundtime} we will obtain a bound on the dimension of  $\Gamma_{\ge \kappa}\cap\{t = t_\circ\}$ for almost every time $t_\circ\in (-1, 0]$ if $\kappa > 2$. For the limiting case, $\kappa = 2$, one has to proceed differently, analogous to what has been done in the elliptic case.

Let us start by defining the set $\Gamma_2$. We say that a point $(x_\circ, t_\circ) \in \overline{\Gamma}(u)\cap Q_{1-h}^+$ belongs to $\Gamma_2$, $(x_\circ, t_\circ) \in \Gamma_2\cap Q_{1-h}^+$, if parabolic blow-ups around that point converge uniformly to a parabolic 2-homogeneous polynomial.

Namely, consider a fixed test function $\psi\in C^\infty_c(\R^n)$ such that ${\rm supp}\, \psi\subset B_{h}$, $0\le \psi \le 1$, $\psi \equiv 1$ in $B_{h/2}$, and $\psi(x', x_{n+1}) = \psi(x', -x_{n+1})$. Then $u^{x_\circ, t_\circ}(x, t) \psi(x) $ can be considered to be defined in $\R^n_+\times (-h^2, 0]$, and we denote
\[
H_u^{x_\circ, t_\circ}(r) :=  \frac{1}{r^2}\int_{-r^2}^0\int_{\R^n_+} \bar u^{x_\circ, t_\circ}(x, t) \psi(x) G(x, t) \, dx\, dt,
\]
where $G(x, t)$ is the backward heat kernel in $\R^{n+1}\times\R$,
\[
G(x, t) = \left\{
\begin{array}{ll}
(-4\pi t)^{-\frac{n+1}{2}} e^{\frac{|x|^2}{4t}} & \textrm{if }t < 0,\\
0 & \textrm{if }t \ge 0.
\end{array}
\right.
\]
We then define the rescalings
\[
u_r^{x_\circ, t_\circ}(x, t) := \frac{\bar u^{x_\circ, t_\circ}(rx, r^2 t)}{H_u^{x_\circ, t_\circ}(r)^{1/2}}.
\]
Then, we say that $(x_\circ, t_\circ) \in \Gamma_2$ if for every $r_j \downarrow 0$, there exists some subsequence $r_{j_k}\downarrow 0$ such that
\[
u_{r_{j_k}}^{x_\circ, t_\circ}\to p_2^{x_\circ, t_\circ}\quad\textrm{uniformly in compact sets},
\]
for some parabolic 2-homogeneous caloric polynomial $p_2^{x_\circ, t_\circ} = p_2^{x_\circ, t_\circ}(x, t)$ (i.e., $p_2(\lambda x, \lambda^2 t) = \lambda^2 p_2(x, t)$ for $\lambda > 0$), which is a global solution to the parabolic Signorini problem. The exis\-tence of such polynomial, the uniqueness of the limit, and its properties, are shown in \cite[Proposition 12.2, Lemma 12.3, Theorem 12.6]{DGPT17}. Moreover, by the classification of free boundary points performed in \cite{DGPT17} we know that
\[
\Gamma(u) = {\rm Reg}(u) \cup \Gamma_{\ge 2}.
\]

In addition, by \cite[Proposition 4.5]{Shi18} there are no free boundary points with frequency belonging to the interval $(2, 2+ \alpha_\circ )$ for some $\alpha_\circ> 0$ depending only on $n$. Thus,
\begin{equation}
\label{eq.fbparab}
\Gamma(u) = {\rm Reg}(u) \cup \Gamma_{2}\cup \Gamma_{\ge 2+\alpha_\circ}.
\end{equation}

\begin{prop}
\label{prop.parab1}
The set $\Gamma_2$ defined as above is such that
\[
\dim_{\mathcal{H}}(\Gamma_2\cap \{t = t_\circ\}) \le n-2,\quad\textrm{for a.e.}\quad t_\circ\in (-1, 0].
\]
\begin{proof}
We separate the proof into two steps.
\\[0.1cm]
{\bf Step 1.}
By \cite[Theorem 12.6]{DGPT17}, we know that
\[
\bar u^{x_\circ, t_\circ}(x, t) = p_2^{x_\circ, t_\circ}(x, t) + o(\|(x, t)\|^2),
\]
where $\|(x, t)\| = (|x|^2+|t|)^{1/2}$ is the parabolic norm. Here $p_2^{x_\circ, t_\circ}$ is a polynomial, parabolic 2-homogeneous global solution to the parabolic Signorini problem. In particular, it is at most linear in time.  On the other, since $u_t \ge 0$ everywhere, the same occurs with the parabolic blow-up up, i.e., $p_2^{x_\circ, t_\circ}$ is non-decreasing in time. All this implies that $p_2^{x_\circ, t_\circ}$ is actually constant in time, so that we have that $p_2^{x_\circ, t_\circ} = p_2^{x_\circ, t_\circ}(x)$ is an harmonic, second-order polynomial in $x$, non-negative on the thin space $\{x_{n+1} = 0\}$, and we have
\[
\bar u^{x_\circ, t_\circ}(x, t) = p_2^{x_\circ, t_\circ}(x) + o(\|(x, t)\|^2).
\]

On the other hand, also from \cite[Theorem 12.6]{DGPT17}, $\Gamma_2 \ni (x_\circ, t_\circ)\mapsto p_2^{x_\circ, t_\circ}$ is continuous. These last two conditions correspond to Proposition~\ref{prop.uniconv} and Proposition~\ref{prop.whitapp} from the elliptic case. In particular, one can apply Whitney's extension theorem as in Proposition~\ref{prop.singdim} to obtain that the set
\[
\pi_x \Gamma_2 := \{ x\in \R^{n+1} : (x, t) \in \Gamma_2\textrm{ for some $t \in (-1, 0]$} \},
\]
is contained in the countable union of $(n-1)$-dimensional $C^1$ manifolds. That is,
\[
\dim_{\mathcal{H}}(\pi_x \Gamma_2) \le n-1,
\]
$\pi_x \Gamma_2$ is $(n-1)$-dimensional.
\\[0.1cm]
{\bf Step 2.} Thanks to Step 1, and by Proposition~\ref{prop.boundtime} with $\kappa = 2$, proceeding analogously to Theorem~\ref{thm.main000} by means of Lemma~\ref{lem.CL}, we reach the desired result.
\end{proof}
\end{prop}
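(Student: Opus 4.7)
The plan is to mirror the elliptic scheme that was already executed in Sections~\ref{sec.Sch}--\ref{sec.dimGam2} for singular points, replacing the monotonicity in $\lambda$ by monotonicity in $t$ (guaranteed by the running hypothesis $u_t \ge 0$). The argument splits into a stratification step (bounding $\dim_{\mathcal{H}} \pi_x \Gamma_2$) and a slicing step (using Lemma~\ref{lem.CL} to lose one dimension for almost every time).

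First I would show that the blow-up polynomial $p_2^{x_\circ,t_\circ}$ is time-independent. By definition, $p_2^{x_\circ,t_\circ}$ is a parabolic $2$-homogeneous caloric polynomial, hence of the form $p_2(x) + c\,t$ with $p_2$ harmonic in $x$ and $p_2(x',0)\ge 0$. Since $u_t\ge 0$ everywhere (by \eqref{eq.condparab} and Hopf), the same holds for every blow-up, which forces $c\ge 0$; but then by parabolic homogeneity $c t$ has the wrong sign for $t<0$ unless $c=0$. So $p_2^{x_\circ,t_\circ}=p_2^{x_\circ,t_\circ}(x)$ is a harmonic, $2$-homogeneous polynomial that is nonnegative on the thin space, giving, in particular, a non-degeneracy bound $j^{-1}r^2\le \sup_{\partial B_r}|\bar u^{x_\circ,t_\circ}|\le j r^2$ at each point of a suitable compact subset $\Gamma_{2,j}\subset \Gamma_2$.

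Next I would establish the two ingredients needed to run Whitney's extension theorem exactly as in Proposition~\ref{prop.singdim}: (i) the uniform expansion $\bar u^{x_\circ,t_\circ}(x,t)=p_2^{x_\circ,t_\circ}(x)+o(\|(x,t)\|^2)$ on each $\Gamma_{2,j}$ with a common modulus, and (ii) continuity of the map $\Gamma_{2,j}\ni (x_\circ,t_\circ)\mapsto p_2^{x_\circ,t_\circ}$. Both follow from the parabolic Monneau-type monotonicity formula in \cite[Chapter 12]{DGPT17}, used to compare the blow-up of a singular point to a fixed admissible polynomial, exactly as in the proofs of Propositions~\ref{prop.uniconv} and~\ref{prop.whitapp}. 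Applying Whitney's extension theorem to the coefficients of $p_2^{x_\circ,t_\circ}$ on $\Gamma_{2,j}$, one obtains that $\pi_x\Gamma_{2,j}$ lies locally in an $(n-1)$-dimensional $C^1$ graph. Covering $\Gamma_2$ by countably many such strata yields $\dim_{\mathcal{H}}\pi_x\Gamma_2\le n-1$.

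Finally I would apply Lemma~\ref{lem.CL} to the family $E_t:=\Gamma_2\cap\{{\rm time}=t\}$ (parametrised by time instead of $\lambda$), viewed as subsets of $\R^n$ via projection. We have $E:=\bigcup_t E_t\subset \pi_x\Gamma_2$, so $\beta=n-1$. The second hypothesis is exactly Proposition~\ref{prop.boundtime} with $\kappa=2$: if $(x_\circ,t_\circ)\in \Gamma_{\ge 2}$, then $u(\cdot, t_\circ+C_* r)>\varphi$ in $B_r'(x_\circ')$, which implies $B_r(x_\circ)\cap E_{t_\circ+C_* r}=\varnothing$; this corresponds to $\gamma=1$ (with any $\eps>0$). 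Since $\beta=n-1\ge 1=\gamma$, Lemma~\ref{lem.CL}(2) gives $\dim_{\mathcal{H}}(E_{t_\circ})\le \beta-\gamma=n-2$ for $\mathcal{H}^1$-a.e.\ $t_\circ\in(-1,0]$, which is the claim.

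The main obstacle is the first step: verifying that the parabolic analogues of the Monneau-type monotonicity formula and the continuity of the blow-up polynomial hold in the form required for Whitney extension. Once these are in hand, the dimension reduction via Lemma~\ref{lem.CL} is essentially the same as in the elliptic case, since Proposition~\ref{prop.boundtime} has already packaged the necessary ``no-return'' statement for time.
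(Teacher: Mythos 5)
Your proposal is correct and follows essentially the same two-step route as the paper: first reduce the blow-up polynomial to a time-independent harmonic polynomial and run Whitney's extension (as in Proposition~\ref{prop.singdim}) to get $\dim_{\mathcal{H}}(\pi_x\Gamma_2)\le n-1$, then apply Lemma~\ref{lem.CL} with $\beta=n-1$, $\gamma=1$ via Proposition~\ref{prop.boundtime} with $\kappa=2$. Your explicit sign argument for why the linear-in-time coefficient vanishes (nonnegativity on the thin space for $t<0$ versus $u_t\ge 0$) is a slightly more detailed version of what the paper leaves implicit, but the substance is identical.
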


\begin{prop}
\label{prop.parab2}
Let $a > 0$. Then,
\[
\dim_{\mathcal{H}}(\Gamma_{\ge 2+a}\cap \{t = t_\circ\}) \le n-1-a,\quad\textrm{for a.e.}\quad t_\circ\in (-1, 0],
\]
\end{prop}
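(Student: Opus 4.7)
The plan is to run the template of Theorem~\ref{thm.main000} verbatim, with the time parameter $t_\circ \in (-1,0]$ playing the role of $\lambda$ and with Proposition~\ref{prop.boundtime} supplying the parabolic counterpart of the cleaning estimate that Proposition~\ref{prop.kap} provides in the elliptic case. Concretely, for every $t_\circ \in (-1,0]$ I would define the horizontal slice of the projection onto the thin space
\[
E_{t_\circ} := \big\{x_\circ' \in B_1' : ((x_\circ',0),t_\circ) \in \Gamma_{\ge 2+a}\big\}, \qquad E := \bigcup_{t_\circ \in (-1,0]} E_{t_\circ} \subset B_1',
\]
so that trivially $\dim_{\mathcal{H}}(E) \le n$, which plays the role of the constant $\beta = n$ in Lemma~\ref{lem.CL}.

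Next, I would verify the decay hypothesis of Lemma~\ref{lem.CL}. Fix $x_\circ' \in E_{t_\circ}$. Proposition~\ref{prop.boundtime} with $\kappa = 2+a$ (which lies in the admissible range provided $0 < a \le 1$, the regime of interest for the intended application) gives
\[
u(\cdot,\, t_\circ + C_* t^{1+a}) > \varphi \quad \textrm{on}\quad B_t'(x_\circ'),\qquad 0 < t < T_h.
\]
Since $u_t \ge 0$ globally, once $u > \varphi$ on a thin ball the strict inequality persists at every later instant; hence $B_t'(x_\circ') \cap E_{t_\circ + s} = \varnothing$ for every $s \ge C_* t^{1+a}$. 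Absorbing the constant via $C_* t^{1+a} \le t^{1+a-\eps}$ for $t$ smaller than some $\rho(\eps) > 0$, I obtain exactly the form required by Lemma~\ref{lem.CL}:
\[
B_t'(x_\circ') \cap E_{t'} = \varnothing \quad \textrm{whenever}\quad 0 < t < \rho,\ \ t' > t_\circ + t^{1+a-\eps}.
\]

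Finally, after an affine reparametrization of $(-1,0]$ into $[0,1]$ I would invoke Lemma~\ref{lem.CL} with $\beta = n$ and $\gamma = 1 + a$; alternative (2) of that lemma (since $n \ge 1 + a$ for the relevant range of $a$) yields
\[
\dim_{\mathcal{H}}(E_{t_\circ}) \le \beta - \gamma = n - 1 - a \quad \textrm{for a.e.}\quad t_\circ \in (-1,0],
\]
which is the desired conclusion. The only non-routine ingredient is the translation of Proposition~\ref{prop.boundtime} into the hypothesis of Lemma~\ref{lem.CL}; everything beyond that is a direct transcription of Theorem~\ref{thm.main000}. For $a > 1$ the argument would require first extending Proposition~\ref{prop.boundtime} to larger $\kappa$ along the same barrier-plus-Hopf scheme, but this is not needed for the application to Theorem~\ref{thm.parab_MAIN}, where only $a = \alpha_\circ \ll 1$ intervenes.
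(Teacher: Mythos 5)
Your proposal is correct and is essentially a faithful expansion of the paper's own (one-line) proof: slice $\Gamma_{\ge 2+a}$ by time, take $\beta=n$ and $\gamma=\kappa-1=1+a$, feed the cleaning estimate of Proposition~\ref{prop.boundtime} (together with $u_t\ge 0$) into Lemma~\ref{lem.CL}(2). Your remark that the stated range $2\le\kappa\le 3$ of Proposition~\ref{prop.boundtime} restricts the argument to $a\le 1$ is accurate and applies equally to the paper's proof, which only ever invokes the result with $a=\alpha_\circ$ small.
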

\begin{proof}
The result follows by Proposition~\ref{prop.boundtime} with $\kappa = 2+a$, proceeding analogously to Theorem~\ref{thm.main000} by means of Lemma~\ref{lem.CL}.
\end{proof}

We can now give the proof of the main result regarding the parabolic Signorini problem.

\begin{proof}[Proof of Theorem~\ref{thm.parab_MAIN}]
Is a direct consequence of \eqref{eq.fbparab}, Proposition~\ref{prop.parab1}, and Proposition~\ref{prop.parab2} with $a = \alpha_\circ$ depending only on $n$, given by \cite[Proposition 4.5]{Shi18}. The regularity of the free boundary follows from \cite[Theorem 11.6]{DGPT17}.
\end{proof}

\end{document}